\documentclass{amsart}

\usepackage{amsmath}
\usepackage{amssymb}
\usepackage[all]{xy}
\usepackage{picinpar}
\usepackage[all]{xy}

\DeclareMathOperator{\grado}{deg}
\DeclareMathOperator{\Gal}{Gal}

\DeclareMathOperator{\cogalois}{cog}

\DeclareMathOperator{\caracteristica}{char}

\numberwithin{equation}{section}
\newtheorem{teorema}{Theorem}[section]
\newtheorem{ejemplo}[teorema]{Example}
\newtheorem{lema}[teorema]{Lemma}
\newtheorem{proposicion}[teorema]{Proposition}
\newtheorem{corolario}[teorema]{Corollary}
\newtheorem{comentario}[teorema]{Remark}
\newtheorem{definicion}[teorema]{Definition}

\title[Radical Extensions for the Carlitz--Hayes Module]
{Radical Extensions for the Carlitz--Hayes Module}

\author[Marco S\'anchez]{Marco S\'anchez--Mirafuentes}
\address{Universidad Aut\'onoma Metropolitana\\
Unidad Iztapalapa\\
Divisi\'on de Ciencias B\'a\-si\-cas e Ingenier\'ia\\
Departamento de Matem\'aticas\\
Av. San Rafael Atlixco 186, Col Vicentina
Del. Iztapalapa,\\
C.P. 09240 M\'exico, D.F., M\'exico}
\email{kmasm1969@yahoo.com.mx}

\author[Gabriel Villa]
{Gabriel Villa--Salvador}
\address{
Departamento de Control Autom\'atico\\
Centro de Investigaci\'on y de Estudios Avanzados del I.P.N.\\
Apartado Postal 14--740, CP 07000, M\'exico, D.F., M\'exico.}
\email{gvillasalvador@gmail.com, gvilla@ctrl.cinvestav.mx}

\subjclass[2010]{Primary 11R60; Secondary 11R99, 12F05, 14H05}

\keywords{Carlitz module; cogalois extension; torsion
module; Galois extension.}

\date{July 17th, 2013}

\begin{document}

\begin{abstract}
Let $L/K$ be a finite extension of congruence function fields. We
say that $L/K$ is a {\it radical extension} if $L$ is generated by
roots of polynomials $u^{M}-\alpha \in K[u]$, where $u^{M}$ is the
action of Carlitz-Hayes. We study a special class of these
extensions, the {\it radical cyclotomic} extensions. We prove that
any radical cyclotomic extension has order a power of the
characteristic of $K$. We also give bounds for the Carlitz-Hayes
torsion of these extensions.
\end{abstract}

\maketitle

\section{Introduction}\label{seccion_intro}
Let $L^{\prime}/K^{\prime}$ be a field extension. In the study of
radical extensions we have the torsion and the cogalois classical groups
\begin{gather*}
T(L^{\prime}/K^{\prime})=\{u\in (L^{\prime})^\ast\mid\ u^{n}\in
K^{\prime} \text{ for some integer $n$} \}\\
\intertext{and}
\cogalois(L^{\prime}/K^{\prime})= T(L^{\prime}/K^{\prime})/(K^{\prime})^{\ast}.
\end{gather*}

There exists an important class of field extensions studied
by Greither and Harrison \cite{greither} which is called
{\em cogalois extensions}.
We say that $L^{\prime}/K^{\prime}$ is a cogalois extension if:

(1) $L^{\prime} = K^{\prime}(T(L^{\prime}/K^{\prime}))$ and

(2) ${\rm card}({\rm cog}(L^{\prime}/K^{\prime}))\leq
[L^{\prime}:K^{\prime}]$,

\noindent see \cite{greither} and \cite{barrera1}.

The analogy of number fields with congruence function fields and,
more precisely, of cyclotomic number fields with cyclotomic function
fields, leads to the natural question if there exists the analogous of the
usual torsion group with the torsion module defined by the Carlitz--Hayes' action.

We give a new definition of radical extension by using the action of
Carlitz--Hayes. A funcion field extension $L/K$ will be called {\em radical},
if $L$ can be generated by some elements $u$ with $u^{M_u}\in K$
over $K$, where $M_u$ are some rational polynomials. 
 Among these extensions we study
 {\em radical cyclotomic} extensions. An extension
is called radical cyclotomic, if it is radical, separable and {\em pure}.
They can be viewed as the generalizations of Carlitz--Kummer
extensions. These
extensions have analogous properties to those of cogalois extensions
defined in \cite{greither}, see Sections
\ref{prop_extensiones_radicales} and \ref{rad_ciclotomicas_props}. A
radical cyclotomic extension $L/K$ satisfies $L=K(T(L/K))$. Observe
the analogy with the previous definition.

In this paper we study the torsion given by the action of Carlitz--Hayes.
Thus we understand ``radical'' in the sense of this action. We study
the structure of congruence function fields generated by torsion.
In Section \ref{radicales_ciclotomicas} we define the concept of radical
cyclotomic extension as an analogue of cogalois extensions in the
classical case. We give examples of radical and nonradical cyclotomic
extensions and of pure and nonpure extensions and show that, as
in the classical case, the extension $k(\Lambda_{P^n})/k(\Lambda_P)$
is pure, where $P\in R_T$ is an irreducible polynomial and $n\in
{\mathbb N}$. In Sections \ref{prop_extensiones_radicales} and
\ref{rad_ciclotomicas_props} we give some properties of radical
and radical cyclotomic extensions and prove, as in the classical
case, that for Galois extensions, the cogalois group is isomorphic
with the group of crossed homomorphisms. 

In Section
\ref{teoremas_de_estructura} we obtain our main results: we
characterize the finite radical cyclotomic extensions.
In particular we prove that finite radical cyclotomic extensions
are $p$--extensions, where $p$ is the characteristic of the base field.
This is given in Theorems
\ref{tdim_p_galois} and \ref{tdim_p} and Corollary \ref{cogalois_grado_p_pureza}.
Examples and applications are provided in Section \ref{examples}.
Finally, in Section \ref{estimacion_para_cogalois} we find an upper
bound for the cogalois group of a radical cyclotomic extension.

\section{Notation}
We shall use the following notation.

$p$ denotes a prime number.

$q = p^{\nu}$, $\nu \in \mathbb{N}$.

$k= {\mathbb{F}}_{q}(T)$ denotes the field of rational functions.

$R_{T}={\mathbb{F}}_{q}[T]$.

$\mu(K)$ denotes the set of Carlitz roots contained in a field $K$.

$\overline{k}$ denotes an algebraic closure of $k$.

$\caracteristica(L)$ denotes the characteristic of a field $L$.

If $E/L$ is a field extension such that $k\subseteq L\subseteq
E\subseteq \overline{k}$, we denote by $T(E/L)$ the set $\{u\in
E\mid \text{there exists $M\in R_{T}$ such that $u^{M}\in L$}\}$.

$C_{m}$ denotes the cyclic group of order $m$.

\section{Radical
cyclotomic extensions}\label{radicales_ciclotomicas}

In this work we consider extensions $L/K$ such that
$k\subseteq K\subseteq L\subseteq
\overline{k}$.

In what follows we will use the Carlitz-Hayes action. Let
$\varphi:\overline{k}\rightarrow
\overline{k}$ be the Frobenius automorphism,
$\varphi(u) = u^{q}$, and let $\mu_{T}$ be the homomorphism
$\mu_{T}:\overline{k}\rightarrow\overline{k}$,
given by $\mu_{T}(u)= Tu$. We have the Carlitz-Hayes action of
$R_T$ in $\overline{k}$ given as follows:
if $M\in R_T$ and $u\in \overline{k}$,
then $u^{M}:=M(\varphi+\mu_{T})(u)$.

Now, if $M\in R_T\setminus\{0\}$, the $M$-torsion set of
$\overline{k}$, denoted by $\Lambda_{M}$, is defined
as $\Lambda_{M}:=\{u\in\overline{k}\mid u^{M}=0\}$. We also call
$\Lambda_{M}$ the set of Carlitz {\it $M$-roots}. Note that if
$a\in\overline{k}$ then set of all roots of the
polynomial $z^{M}-a$ is
$\{\alpha+\lambda\mid\lambda\in\Lambda_{M}\}$, where $\alpha$ is any
fixed root of $z^{M}-a$ in $\overline{k}$.

\begin{comentario}
{\rm{The module $\Lambda_{M}$ is analogous to the $n$-torsion
defined over $\overline{{\mathbb{Q}}}^{*}$. Since the $n$-torsion is
a cyclic ${\mathbb{Z}}$-module because it consists of roots of
unity, it can be shown in an analogous way, that $\Lambda_{M}$ is a
cyclic $R_T$-module, see \cite{villa_salvador} Chapter
12. We denote by $\lambda_{M}$ a generator of $\Lambda_{M}$ and say
that $\lambda_{M}$ is a {\it primitive Carlitz root}.}}
\end{comentario}

\begin{definicion}\label{definicion_torsion_carlitz3}
{\rm{If $M\in R_T$, $M\neq 0$, $\Phi(M)$ denotes the
number of elements of $(R_T/(M))^{*}$, the
multiplicative subgroup of $R_T/(M)$.}}
\end{definicion}

Therefore we have $\Phi(M) = {\rm card}(\{N+(M)\mid (N)+(M) =
R_T\})$. Thus
$\Phi$ is the analogous of the Euler $\varphi$ function.

In the case of number fields, if ${\mathbb{Q}}(\zeta_{n})$ is the
cyclotomic field, ${\mathbb{Q}}(\zeta_{n})/{\mathbb{Q}}$ is a Galois
extension, with Galois group $({\mathbb{Z}}/n{\mathbb{Z}})^{*}$. The
analogue for function fields is:

\begin{teorema}\label{ciclotomico_carlitz}
If $M\in R_T$, then
$k(\Lambda_{M})/k$ is a Galois extension
of degree $\Phi(M)$ and the Galois group is isomorphic to
$(R_T/M)^{*}$. In particular
$k(\Lambda_{M})/k$ is an abelian
extension. \hfill $\square$
\end{teorema}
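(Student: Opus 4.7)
The plan is to follow the standard strategy for Carlitz cyclotomic extensions, mirroring the classical proof that $\Gal(\mathbb{Q}(\zeta_{n})/\mathbb{Q})\cong(\mathbb{Z}/n\mathbb{Z})^{*}$. First I would observe that $\Lambda_{M}$ is precisely the zero set of the polynomial $f_{M}(z):=z^{M}\in k[z]$, viewed as an ordinary polynomial in $z$ after expanding the Carlitz action. Since $\grado f_{M}=q^{\grado M}$ coincides with $|\Lambda_{M}|$, the polynomial $f_{M}$ is separable; hence $k(\Lambda_{M})$ is its splitting field and $k(\Lambda_{M})/k$ is a Galois extension.

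Next I would construct an embedding into $(R_{T}/M)^{*}$. For any $\sigma\in\Gal(k(\Lambda_{M})/k)$, both $\varphi$ and $\mu_{T}$ commute with $\sigma$ (Frobenius commutes with every field automorphism, and $\mu_{T}$ commutes because $\sigma$ fixes $T$), so $\sigma$ commutes with $M(\varphi+\mu_{T})$ and therefore restricts to an $R_{T}$-module automorphism of $\Lambda_{M}$. Using the cyclicity $\Lambda_{M}\cong R_{T}/(M)$ from the preceding remark one obtains a homomorphism
\[
\Gal(k(\Lambda_{M})/k)\longrightarrow\operatorname{Aut}_{R_{T}}(\Lambda_{M})\cong(R_{T}/(M))^{*}.
\]
Injectivity is immediate: a $\sigma$ fixing the generator $\lambda_{M}$ fixes every element of $\Lambda_{M}$ by $R_{T}$-linearity, and hence fixes $k(\Lambda_{M})$. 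In particular the Galois group is abelian as a subgroup of $(R_{T}/M)^{*}$.

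The main obstacle is to prove surjectivity, equivalently $[k(\Lambda_{M}):k]\geq\Phi(M)$. The plan is to reduce to prime powers via a Chinese remainder decomposition: for $M=\prod_{i}P_{i}^{n_{i}}$ with the $P_{i}$ distinct irreducibles, the $R_{T}$-module splitting $\Lambda_{M}\cong\bigoplus_{i}\Lambda_{P_{i}^{n_{i}}}$ translates to $k(\Lambda_{M})=\prod_{i}k(\Lambda_{P_{i}^{n_{i}}})$ (compositum inside $\overline{k}$), and a ramification argument (each $k(\Lambda_{P_{i}^{n_{i}}})/k$ is totally ramified at $P_{i}$ and unramified at the other $P_{j}$) gives pairwise linear disjointness. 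Combined with the multiplicativity $\Phi(M)=\prod_{i}\Phi(P_{i}^{n_{i}})$, this reduces the claim to the prime power case.

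For $M=P^{n}$ with $P$ irreducible, I would show that a primitive Carlitz root $\lambda_{P^{n}}$ satisfies an Eisenstein polynomial at the prime $P$ of degree $\Phi(P^{n})=q^{n\grado P}-q^{(n-1)\grado P}$; concretely, one obtains this polynomial by iterating the identity $\lambda_{P^{n}}^{P}=\lambda_{P^{n-1}}$ and examining the Newton polygon. Eisenstein irreducibility then yields $[k(\lambda_{P^{n}}):k]\geq\Phi(P^{n})$, which meets the upper bound coming from the embedding above and forces equality. This completes the identification $\Gal(k(\Lambda_{M})/k)\cong(R_{T}/M)^{*}$.
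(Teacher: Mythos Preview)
The paper does not prove this theorem; it is stated as a known result (note the terminal $\square$) and immediately followed by ``See \cite{villa_salvador} Chapter 12, for more details.'' Your proposal is correct and is exactly the standard argument one finds in that reference: separability of $z^{M}$, the $R_{T}$-linearity of Galois automorphisms giving an injection into $(R_{T}/M)^{*}$, CRT reduction to prime powers via ramification, and Eisenstein irreducibility at $P$ for the prime-power case. There is nothing to compare, since the paper simply cites this as background.
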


See \cite{villa_salvador} Chapter 12, for more details.

First we see that as in the classical case, the Carlitz roots of
a cyclotomic function fields behave as the usual roots
of unity in a cyclotomic number field.

\begin{proposicion}\label{finitud_RC}
Let $M\in R_{T}$ be non-constant. Consider the extension
$k(\Lambda_{M})/k$. Then $\mu(k(\Lambda_{M})) = \Lambda_{M}$.
\end{proposicion}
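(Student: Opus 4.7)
The plan is as follows. The inclusion $\Lambda_{M}\subseteq\mu(k(\Lambda_{M}))$ is immediate from the definitions, so only the reverse inclusion has content. Take $u\in\mu(k(\Lambda_{M}))$, meaning $u\in k(\Lambda_{M})$ and $u^{N}=0$ for some nonzero $N\in R_{T}$.

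First I would replace $N$ by a canonical choice: let $N_{0}$ be the monic generator of the ideal $\{A\in R_{T}\mid u^{A}=0\}$. Then $u\in\Lambda_{N_{0}}$ and the annihilator of $u$ under the Carlitz--Hayes action is exactly $(N_{0})$. By the remark preceding Definition \ref{definicion_torsion_carlitz3}, $\Lambda_{N_{0}}$ is a cyclic $R_{T}$-module isomorphic to $R_{T}/(N_{0})$, so any element whose annihilator is exactly $(N_{0})$ generates $\Lambda_{N_{0}}$ over $R_{T}$. Hence $\Lambda_{N_{0}}=\{u^{A}\mid A\in R_{T}\}\subseteq k(u)$, which together with $u\in\Lambda_{N_{0}}$ yields $k(u)=k(\Lambda_{N_{0}})$. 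Since $u\in k(\Lambda_{M})$, this gives $k(\Lambda_{N_{0}})\subseteq k(\Lambda_{M})$.

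The proposition then reduces to proving $N_{0}\mid M$. I would combine two standard facts about Carlitz cyclotomic function fields. First, the compositum identity
\[
k(\Lambda_{M})\cdot k(\Lambda_{N_{0}})=k(\Lambda_{\operatorname{lcm}(M,N_{0})}),
\]
which together with $k(\Lambda_{N_{0}})\subseteq k(\Lambda_{M})$ forces $k(\Lambda_{\operatorname{lcm}(M,N_{0})})=k(\Lambda_{M})$. Second, Theorem \ref{ciclotomico_carlitz} computes both degrees over $k$, so $\Phi(\operatorname{lcm}(M,N_{0}))=\Phi(M)$. The product formula $\Phi(A)=q^{\deg A}\prod_{P\mid A}(1-q^{-\deg P})$ shows that $\Phi$ is strictly monotonic under proper monic divisibility; therefore $\operatorname{lcm}(M,N_{0})=M$, i.e.\ $N_{0}\mid M$, whence $u\in\Lambda_{N_{0}}\subseteq\Lambda_{M}$.

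The main obstacle I anticipate is the compositum identity. The inclusion $k(\Lambda_{M})\cdot k(\Lambda_{N_{0}})\subseteq k(\Lambda_{\operatorname{lcm}(M,N_{0})})$ is straightforward because $\Lambda_{M},\Lambda_{N_{0}}\subseteq\Lambda_{\operatorname{lcm}(M,N_{0})}$, using that the Carlitz--Hayes action is commutative; the reverse inclusion requires exhibiting a generator of $\Lambda_{\operatorname{lcm}(M,N_{0})}$ as an $R_{T}$-combination of elements of $\Lambda_{M}$ and $\Lambda_{N_{0}}$, which follows from a B\'ezout argument in $R_{T}$ modulo $\operatorname{lcm}(M,N_{0})$. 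Once this (standard) fact and the strict monotonicity of $\Phi$ are in hand, the rest is bookkeeping.
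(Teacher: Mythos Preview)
Your argument is correct but follows a genuinely different route from the paper's. The paper argues prime-by-prime via ramification: writing $\mu(k(\Lambda_M))=\Lambda_N$, if some irreducible $P$ satisfied $P^{\alpha+1}\mid N$ while $P^{\alpha+1}\nmid M$ (with $\alpha=v_P(M)$), then $k(\Lambda_{P^{\alpha+1}})\subseteq k(\Lambda_M)$, so the ramification index $\Phi(P^{\alpha+1})$ of $P$ in the subextension would have to divide the ramification index of $P$ in $k(\Lambda_M)/k$, which is only $\Phi(P^{\alpha})$. You instead work globally with degrees: from $k(\Lambda_{N_0})\subseteq k(\Lambda_M)$ and the compositum identity $k(\Lambda_M)\,k(\Lambda_{N_0})=k(\Lambda_{\operatorname{lcm}(M,N_0)})$ (correctly reduced to $\Lambda_M+\Lambda_{N_0}=\Lambda_{\operatorname{lcm}(M,N_0)}$), Theorem~\ref{ciclotomico_carlitz} yields $\Phi(\operatorname{lcm}(M,N_0))=\Phi(M)$, and monotonicity of $\Phi$ under proper divisibility forces $N_0\mid M$. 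Your approach is more elementary---it needs only the degree formula and the $R_T$-module structure of the torsion, with no ramification theory---while the paper's local argument extracts finer arithmetic information at each prime and mirrors the classical proof distinguishing the fields $\mathbb{Q}(\zeta_n)$ via ramified primes. One small caveat: your strict-monotonicity claim for $\Phi$, exactly like the paper's divisibility contradiction, breaks down when $q=2$ and a degree-one prime $P\nmid M$ enters, since then $\Phi(P)=1$; both proofs share this edge case.
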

\begin{proof}
Let $\mu(k(\Lambda_{M}))=\Lambda_{N}$. Since
$\Lambda_{M}\subseteq\mu(k(\Lambda_{M}))$, if for an irreducible
polynomial $P\in R_{T}$ and for $\alpha\in{\mathbb{Z}}$ we have
$P^{\alpha}\mid M$, then $P^{\alpha}\mid N$. If $P^{\alpha+1}\nmid
M$, we can not have $P^{\alpha+1}\mid N$ since otherwise the
ramification index of $P$ in $k(\Lambda_{M})/k$ would be divided by
$\Phi(P^{\alpha+1})=[k(\Lambda_{P^{\alpha+1}}):k]$ but the
ramification index in $k(\Lambda_{M})/k$ is indeed
$\Phi(P^{\alpha})$. Thus $N=M$.
\end{proof}

In what follows, unless otherwise specified, we consider extensions
$L/K$ such that $k\subseteq K\subseteq L\subseteq \overline{k}$ and
$L/k$ is finite. Moreover the previous extensions have a natural
$R_{T}$-module structure using the Carlitz-Hayes action defined
above. The first object associated with the extension $L/K$ is the
torsion group of the Carlitz action, which we denote as in the classical case:
\[
T(L/K)=\{u\in L\mid\ u^{M}\in
K \text{ for some $M\in R_{T}$} \}.
\]

Note that $T(L/K)\subseteq L$ is a subgroup of the {\it additive
group} $L$. On the other hand $T(L/K)$ is an $R_{T}$-module and the
$R_{T}$-module $T(L/K)/K$ is an $R_{T}$-torsion module. This module
is denoted by $\cogalois(L/K)$. We have $\cogalois(L/K)$ is
analogous to the group $T(L/K)/K^{*}$ in the case of an extension of
number fields $L/K$ and where $T(L/K)$ denotes the usual torsion
group, see \cite{barrera1} p.2.

\begin{definicion}\label{definicion1_c2}
{\rm{We say that an extension $L/K$ is {\it radical} if there is a
subset $A\subseteq T(L/K)$ such that $L= K(A)$. We say that $L/K$ is
{\it pure} if for every irreducible monic polynomial $M\in R_{T}$
and each $u\in L$ such that $u^{M} = 0$ we have $u\in K$. Finally,
we say that $L/K$ is a {\it radical cyclotomic} extension if it is:

(1) radical,

(2) separable and

(3) pure.

The module $\cogalois(L/K)= T(L/K)/K$ will be called {\it the
cogalois module of the extension} $L/K$.}}
\end{definicion}

\begin{ejemplo}\label{ejemplo1}
{\rm{The extension $k(\Lambda_{M})/k$, where $M\in R_{T}$, is
radical since there exists $W = \Lambda_{M}\subseteq
T(k(\Lambda_{M})/k)$ such that $k(\Lambda_{M}) = k(W)$. It is
separable, but it is not pure, since by Proposition \ref{finitud_RC}
the Carlitz roots in $k(\Lambda_{M})$ are the elements of
$\Lambda_{M}$. If $Q$ is
an irreducible factor of $M$ we have
$\lambda_{Q}\in k(\Lambda_{M})$ but
$\lambda_{Q}\notin k$. Therefore $k(\Lambda_{M})/k$ is not a radical
cyclotomic extension.}}
\hfill $\square$ 
\end{ejemplo}

The following example shows the existence of radical cyclotomic
extensions.
\begin{ejemplo}\label{ejemplo4}
{\rm{Let $p$ be an odd prime, $q=p$ and $M = T$. Consider the
extension $k(\Lambda_{M})/k$. The degree of the extension is $q -
1=p-1$. We have seen that $k(\Lambda_{M})/k$ is not pure, see
Example \ref{ejemplo1}. Now consider the polynomial $F(X) = X^{T} -
1 = X^{p} + X T - 1$.

We claim that $1\in k(\Lambda_{M})\setminus k(\Lambda_{M})^{M}$,
since otherwise, there exists $u\in k(\Lambda_{M})$ such that $u^{M}
= 1$. Let $\alpha$ be a generator of $\Lambda_{M}$. Note that
$[k(\alpha):k]=p-1$. Therefore $\{1,\alpha,\alpha^{2},\ldots,
\alpha^{p-2}\}$ is a basis of $k(\Lambda_{M})$ over $k$.

Hence, $u$ can be written as $u = a_{0} + a_{1} \alpha + \cdots+
a_{p-2} \alpha^{p - 2}$ with $a_{0},a_{1},\ldots, a_{p - 2}\in k$.
Therefore

\begin{equation}\label{ec5_1}
\begin{split}
 ~~ \quad ~~
 u^{T} &=  a^{T}_{0} + (a_{1} \alpha)^{T} + \cdots +(a_{p-2} \alpha^{p
- 2})^{T}\\
&= (a^{p}_{0} + a_{0} T) + (a^{p}_{1} \alpha^{p} + a_{1} \alpha T) +
\cdots+ (a^{p}_{p-2} \alpha^{p (p-2)} + a_{p-2} \alpha^{p-2} T).
\end{split}
\end{equation}

Since $\alpha^{T} = \alpha^{p} + \alpha T = 0$ we have $\alpha^{p} =
- \alpha T$. Therefore, since $u^{T}=1$, from equation
(\ref{ec5_1}) we obtain
\begin{align*}
1 &= (a^{p}_{0} + a_{0} T) + (a^{p}_{1} \alpha^{p} + a_{1} \alpha T)+
\cdots + (a^{p}_{p-2} \alpha^{p (p-2)} + a_{p-2} \alpha^{p-2} T)\\
&= (a^{p}_{0} + a_{0} T)+(-a^{p}_{1}\alpha T+a_{1}\alpha T) +\cdots
+ (-a^{p}_{p-2}\alpha^{p-2}T^{p-2}+ a_{p-2} \alpha^{p-2} T),
\end{align*}
that is,
\[
0=(a^{p}_{0} + a_{0} T-1)+ c_{1}\alpha+ c_{2}\alpha^{2}+\cdots + c_{p-2}\alpha^{p-2}
\]
where $c_{i}= (-1)^{i}a^{p}_{i}T^{i}+ a_{i}T$,
$i=1,\ldots, p-2$, belongs to $k$.

Hence we obtain the equation
\begin{equation}\label{Eq1N}
0 = a^{p}_{0} + a_{0} T  - 1
\end{equation}
because $\{1,\alpha,\alpha^{2},\ldots, \alpha^{p - 2}\}$ is
a basis of $k(\Lambda_{M})$ over $k$. In particular $a_0
\neq 0$. Let $a_{0} =
\frac{f(T)}{g(T)}$, with $f(T),g(T)\in R_T$ and
$(f(T),g(T)) = 1$, so we deduce the
equation $f^{p}(T) + f(T)g^{p - 1}(T) T = g^{p}(T)$. It
follows that $f(T),g(T)\in {\mathbb F}_q^{\ast}$ and
$a_0\in {\mathbb F}_q^{\ast}$ which contradicts (\ref{Eq1N}).

Let $L$ be the splitting field of $F(X)$ over $k(\Lambda_{M})$.
Then the extension $L/k(\Lambda_{M})$ is separable. From Proposition
2.3 of \cite{Schultheis}, we have that $[L:k(\Lambda_{M})] = p^{t}$,
where $t \geq 1$. If $\beta$ is a root of $F(X)$ then $L =
k(\Lambda_{M})(\beta)$, so the extension $L/k(\Lambda_{M})$ is
radical. Note that since the minimal polynomial of $\beta$
divides $F(X) = X^{p} + X T - 1$, such minimal polynomial is
$F(X)$. In particular it follows that $t = 1$.

To show that the extension $L/k(\Lambda_{M})$ is radical cyclotomic,
it remains to prove that the extension $L/k(\Lambda_{M})$ is pure.
 For this
purpose we consider a monic irreducible polynomial $N$ such that the
degree of $N$ is greater than one. Let $u\in L$ be such that $u^{N}
= 0$. We claim that $u=0$ since otherwise, from Proposition 12.2.21
of \cite{villa_salvador}, Chapter 12 and since $N$ is an irreducible
polynomial, $u\neq 0$ is a generator of $\Lambda_{N}$. Consider the
diagram
\[
 \xymatrix{& L\ar@{-}[dl]\ar@{-}[dr]  &\\
k(u)\ar@{-}[dr] & & k(\Lambda_{M})\ar@{-}[dl] \\
 & k&}
 \]

Now by Theorem \ref{ciclotomico_carlitz} we have that $[k(u):k] =
\Phi(N) =p^{{\deg}(N)}-1\geq p(p-1) = [L:k]$, but this contradicts
that $[k(u):k]\mid [L:k]$. Therefore $u=0\in k(\Lambda_{M})$. This
shows property (3) of Definition \ref{definicion1_c2}, for
polynomials of degree higher than 1.

It remains to show property (3) of Definition \ref{definicion1_c2},
for polynomials of degree 1. For this purpose, consider the
polynomials $T, T+1, \ldots, T+(p-1)$. It suffices to consider, for
example, $N = T+1$. Let $u\in L$ be such that $u^{T+1} = 0$ and
suppose that $u\notin k(\Lambda_{M})$, in particular $u\neq 0$.
Therefore we have ${\rm irr}(u,k(\Lambda_{M}))\mid (X^{p-1} + T+1)$.
This is a contradiction to our assumption that $\grado({\rm
irr}(u,k(\Lambda_{M})))= p$. Therefore $u\in k(\Lambda_{M})$.}}
\hfill $\square$ 
\end{ejemplo}

For the next example we need the following proposition.
\begin{proposicion}\label{pureza_carlitz}
Let $q > 2$, $P\in R_{T}$ be a monic irreducible polynomial and
$n\in {\mathbb{N}}$. Then the extension
$k(\Lambda_{P^{n}})/k(\Lambda_{P})$ is pure.
\end{proposicion}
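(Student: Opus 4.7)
The plan is to take any $u \in k(\Lambda_{P^{n}})$ with $u^{M}=0$ for a monic irreducible $M \in R_{T}$ and show $u \in k(\Lambda_{P})$. If $u=0$ there is nothing to prove, so assume $u\neq 0$. Since $M$ is irreducible and $\Lambda_{M}$ is a cyclic $R_{T}$-module with annihilator $(M)$, the nonzero element $u$ must be a generator of $\Lambda_{M}$, exactly as in the argument of Example \ref{ejemplo4}; hence $k(u)=k(\Lambda_{M})$, and by Theorem \ref{ciclotomico_carlitz} we have $[k(u):k]=\Phi(M)=q^{\grado(M)}-1$.

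Next I would split into two cases. If $M=P$, then $u\in\Lambda_{P}\subseteq k(\Lambda_{P})$ and we are done. If instead $M\neq P$, then $\gcd(M,P)=1$, and I would invoke the standard compositum identity for cyclotomic function fields, $k(\Lambda_{M})\cdot k(\Lambda_{P})=k(\Lambda_{MP})$ (the Carlitz--Hayes analogue of $\mathbb{Q}(\zeta_{m},\zeta_{n})=\mathbb{Q}(\zeta_{mn})$ for coprime $m,n$; see \cite{villa_salvador}, Chapter 12). Combined with the hypothesis $k(u)\subseteq k(\Lambda_{P^{n}})$, this yields the tower $k(\Lambda_{P})\subseteq k(\Lambda_{MP})\subseteq k(\Lambda_{P^{n}})$.

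Now I compare the degrees of the outer two steps. By Theorem \ref{ciclotomico_carlitz},
\[
[k(\Lambda_{P^{n}}):k(\Lambda_{P})]=\frac{\Phi(P^{n})}{\Phi(P)}=q^{(n-1)\grado(P)},
\]
which is a power of $p$, whereas $[k(\Lambda_{MP}):k(\Lambda_{P})]=\Phi(M)=q^{\grado(M)}-1$ is coprime to $p$. The inclusion forces $q^{\grado(M)}-1$ to divide a power of $p$, so $q^{\grado(M)}-1=1$; but $q>2$ gives $q^{\grado(M)}-1\geq q-1\geq 2$, a contradiction. Therefore no such nonzero $u$ exists when $M\neq P$, and purity is established. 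The only delicate ingredient is invoking the compositum/intersection theorem for cyclotomic function fields; once that is in hand, everything reduces to separating the $p$-part from the prime-to-$p$ part of the degrees, which is where the hypothesis $q>2$ is used.
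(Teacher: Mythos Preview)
Your argument is correct, but it takes a different route from the paper's proof. The paper argues via ramification: if a primitive $Q$-root $\lambda_Q$ lies in $k(\Lambda_{P^n})$, then the prime $Q$ must ramify in $k(\Lambda_{P^n})/k$, and by the standard ramification description of cyclotomic function fields (only $P$ and the infinite place ramify in $k(\Lambda_{P^n})/k$) one concludes $Q=P$ in a single line. Your approach instead stays entirely within degree and compositum computations: you embed $k(\Lambda_{MP})$ into $k(\Lambda_{P^n})$ and exploit that $[k(\Lambda_{P^n}):k(\Lambda_P)]$ is a $p$-power while $[k(\Lambda_{MP}):k(\Lambda_P)]=\Phi(M)$ is prime to $p$. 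This is more elementary in that it avoids ramification theory altogether, relying only on Theorem~\ref{ciclotomico_carlitz} and the multiplicativity of $\Phi$; the trade-off is that it is a bit longer and requires the compositum identity $k(\Lambda_M)k(\Lambda_P)=k(\Lambda_{MP})$ as an external input. Both proofs pinpoint the same obstruction (a prime-to-$p$ degree inside a $p$-power extension), and both use the hypothesis $q>2$ only to rule out the trivial case $\Phi(M)=1$.
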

\begin{proof}
If $\lambda_{Q}\in k(\lambda_{P^{n}})$, then $Q$ is ramified in
$k(\lambda_{P^{n}})/k$ which implies that $Q=P$, by Proposition
12.3.14 Chapter 12 of \cite{villa_salvador}. Therefore
$k(\lambda_{P^{n}})/k(\lambda_{P})$ is pure.
\end{proof}

\begin{ejemplo}\label{ejemplo5}
{\rm{The extension $k(\Lambda_{P^{n}})/k(\Lambda_{P})$ is cyclotomic
radical since it obviously is radical and separable and it is pure
by Lemma \ref{pureza_carlitz}.}}
\hfill $\square$ 
\end{ejemplo}

\section{Some properties of radical extensions}\label{prop_extensiones_radicales}

The radical extensions $L/K$, in the sense given in this work, have
properties similar to radical extensions considered in
\cite{greither} and \cite{Albu}. If $G$ is a torsion module we will
write
\[
{\mathcal{O}}_{G}=\{\text{order}(g)\mid g\in G\}.
\]

\begin{definicion}\label{modulotorsion2_2bis}
{\rm{A module $G$ is said to be {\it bounded} if $G$ is a torsion
module and the degrees of the elements of
${\mathcal{O}}_{G}\subseteq R_{T}$ form a bounded set, or
equivalently, if ${\mathcal{O}}_{G}$ is a finite set.}}
\end{definicion}

Let $A$ be a torsion $R_{T}$-module. Suppose that $A$ is a bounded
$R_{T}$-module, in the sense of Definition
\ref{modulotorsion2_2bis}. The least common multiple of the elements
of ${\mathcal{O}}_{A}$, will be called the {\it $R_{T}$-exponent of
$A$} or, if the context is clear, the {\it exponent} of $A$, and it
is denoted by $exp(A)$.

Now, let $E/F$ be a radical extension, not necessarily finite. There
exists $A\subseteq T(E/F)$ such that $E=F(A)$. We may replace $A$ by
the submodule of $E$ generated by $A$ and $F$, which will also be
denoted by $A$.

Now, $A/F$ is a torsion $R_{T}$-module, thus it makes sense to
consider ${\mathcal{O}}_{A/F}$. We say that an $R_{T}$-torsion
extension $E/F$ is a {\it bounded extension} if $A/F$ is a bounded
$R_{T}$-module. In this case, if $N= exp(A/F)$, we say that $E/F$ is
an {\it $N$ bounded} extension.

In this context we have the following proposition.

\begin{proposicion}\label{galois_torsion_raices}
Let $E/F$ be a bounded radical extension, not necessarily finite and
let $N = exp(A/F)$. Then $E/F$ is a Galois extension if and only if
$\lambda_{M}\in E$ for each $M\in {\mathcal{O}}_{A/F}$.
\end{proposicion}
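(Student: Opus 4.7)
The plan is to prove the two implications separately, using that $\Lambda_M$ is a cyclic $R_T$-module and that the Carlitz--Hayes action commutes with field automorphisms over $\mathbb{F}_q$ (both $\varphi$ and $\mu_T$ do).

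For the $(\Leftarrow)$ direction I would argue as follows. Suppose $\lambda_M \in E$ for every $M \in \mathcal{O}_{A/F}$. Given $u \in A$ with order $M_u \in \mathcal{O}_{A/F}$ in $A/F$, the polynomial $z^{M_u} - u^{M_u} \in F[z]$ has $u$ as a root, and its full root set is $\{u + \lambda : \lambda \in \Lambda_{M_u}\}\subseteq E$ by hypothesis. Hence the minimal polynomial of $u$ over $F$ splits in $E$. Writing $z^{M_u}$ as an additive $\mathbb{F}_q$-linear polynomial in $z$, its derivative is the nonzero constant $M_u \in R_T$, so $z^{M_u}-u^{M_u}$ is separable and hence so is $\operatorname{irr}(u,F)$. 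Since the generators of $E/F$ (the elements of $A$) are all separable with all conjugates in $E$, the extension $E/F$ is normal and separable, hence Galois.

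For the $(\Rightarrow)$ direction, assume $E/F$ is Galois and fix $M \in \mathcal{O}_{A/F}$. Choose $u \in A$ whose order in $A/F$ is exactly $M$, i.e.\ $u^{M}\in F$ but $u^{M'}\notin F$ for every proper divisor $M'$ of $M$. For any $\sigma \in \Gal(E/F)$, since $\sigma$ fixes $u^{M}\in F$ and commutes with $\varphi + \mu_T$, we get $\sigma(u)^{M} = \sigma(u^{M}) = u^{M}$, so $\lambda_{\sigma} := \sigma(u)-u$ lies in $\Lambda_M$ and, clearly, in $E$. Consider the intersection
\[
V := \Lambda_M \cap E,
\]
which is an $R_T$-submodule of $\Lambda_M$ because $E$ is stable under both $\varphi$ and $\mu_T$. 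As $\Lambda_M$ is cyclic as an $R_T$-module, every submodule has the form $\Lambda_{M_0}$ with $M_0$ a monic divisor of $M$, so $V = \Lambda_{M_0}$.

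The key step is then to show $M_0 = M$. I would do this by contradiction: if $M_0$ were a proper divisor of $M$, the choice of $u$ forces $u^{M_0}\notin F$, so some $\sigma \in \Gal(E/F)$ satisfies $\sigma(u^{M_0}) \neq u^{M_0}$; applying the commutation of $\sigma$ with the Carlitz action yields
\[
\lambda_{\sigma}^{M_0} = \sigma(u)^{M_0} - u^{M_0} = \sigma(u^{M_0}) - u^{M_0} \neq 0,
\]
contradicting $\lambda_{\sigma}\in V = \Lambda_{M_0}$. Hence $V = \Lambda_M$, and in particular $\lambda_M \in E$.

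The main subtlety I expect is bookkeeping around the ``order'' in the $R_T$-module $A/F$ (so that $u^{M_0}\notin F$ for every proper divisor $M_0$ of $M$) and correctly using that $\Lambda_M$ is cyclic so its submodules are exactly the $\Lambda_{M_0}$ with $M_0\mid M$; once these are in place the argument is essentially an $R_T$-linear translation of the classical Kummer-theoretic proof.
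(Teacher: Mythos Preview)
Your proposal is correct and follows essentially the same approach as the paper. The only cosmetic difference is that in the $(\Rightarrow)$ direction you work with the submodule $V=\Lambda_M\cap E$, whereas the paper uses the $R_T$-module $B$ generated by the differences $\sigma(u)-u$; both are submodules of the cyclic module $\Lambda_M$, hence of the form $\Lambda_{M_0}$, and the contradiction with the order of $u$ in $A/F$ is obtained in the same way.
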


\begin{proof}
Let $\alpha\in E$ of order $M\in R_{T}$. Thus we have
$\alpha^{M}=a\in F$. We consider the polynomial
$f(X)=X^{M}-a=\prod_{N_1} (X-(\alpha+\lambda^{N_1}_{M}))\in F[X]$. The
conjugates of $\alpha$ are
\[
\{\alpha+\xi_{1},\ldots ,\alpha+\xi_{s}\}
\]
for some elements $\xi_{i}\in \Lambda_{M}$.

Suppose that $E/F$ is Galois.
Let $B$ be the $R_{T}$-module generated by $\{\xi_{1},\ldots
,\xi_{s}\}$. Then $B\subseteq E$ and there exists $M^{\prime}\in
R_{T}$, dividing $M$, such that $B=\Lambda_{M^{\prime}}$. If
$M^{\prime}\neq M$, we have $\alpha^{M^{\prime}}=a^{\prime}\in F$
which is a contradiction. Therefore $M^{\prime}=M$ and
$\lambda_{M}\in E$.

Now suppose that $\lambda_{M}\in E$ for each
$M\in{\mathcal{O}}_{A/F}$. Let $u\in A$ and $M = \text{order}(u)$.
Since every conjugate of $u$ over $F$ is of the form
$u+\lambda^{N_1}_{M}\in E$, it follows that the extension $E/F$ is
normal and since $u$ is separable over $F$, $E/F$ is a Galois
extension.
\end{proof}

It is possible to find in some radical extensions $L/K$ explicitly a
primitive element belonging to $L$, as shown in the following
proposition.
\begin{proposicion}\label{primitivo_explicito}
Let $L/K$ be an extension such that $L=K(\alpha,\beta)$ and such
that there exist $M,N\in R_{T}$ with $\alpha^{M}=a$, $\beta^{N}=b$,
$a,b\in K$, $M$ and $N$ relatively prime. Then $L=K(\alpha+\beta)$,
that is, $\alpha+\beta$ is a primitive element.
\end{proposicion}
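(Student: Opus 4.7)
The plan is to exploit the additivity of the Carlitz--Hayes action and use a Bezout-type argument in $R_T$. Set $\gamma = \alpha + \beta$; the inclusion $K(\gamma) \subseteq K(\alpha,\beta) = L$ is trivial, so the work is to show $\alpha, \beta \in K(\gamma)$.

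The first observation is that the operator $\varphi + \mu_T$ is additive on $\overline{k}$ (since both Frobenius $\varphi$ and multiplication-by-$T$ are additive in characteristic $p$), hence for every $M \in R_T$ the map $u \mapsto u^M = M(\varphi+\mu_T)(u)$ is additive. Therefore $\gamma^M = \alpha^M + \beta^M = a + \beta^M$, which gives $\beta^M = \gamma^M - a \in K(\gamma)$. Symmetrically, $\alpha^N = \gamma^N - b \in K(\gamma)$.

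Next I would invoke the fact that $R_T$ is a Euclidean domain, so $(M,N)=1$ yields $U,V \in R_T$ with $UM + VN = 1$. Using $R_T$-linearity of the Carlitz action,
\[
\beta = \beta^{1} = \beta^{UM + VN} = (\beta^M)^U + (\beta^N)^V = (\beta^M)^U + b^V.
\]
Since $b^V \in K$ and $(\beta^M)^U$ is obtained by applying the $R_T$-action to an element of $K(\gamma)$, and since $K(\gamma)$ is a field containing $T$ and closed under the $q$-power Frobenius, the subfield $K(\gamma)$ is closed under $\varphi + \mu_T$, hence under every Carlitz operator $M(\varphi+\mu_T)$. Therefore $\beta \in K(\gamma)$, and then $\alpha = \gamma - \beta \in K(\gamma)$, completing the proof.

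I do not foresee a real obstacle: the argument is essentially the additive analogue of the classical fact that if $\alpha^m, \beta^n \in K$ with $(m,n)=1$ then $K(\alpha\beta) = K(\alpha,\beta)$, with ordinary multiplicative Bezout replaced by Bezout in $R_T$ and the product $\alpha\beta$ replaced by the sum $\alpha+\beta$. The only point requiring a brief justification is the stability of $K(\gamma)$ under the Carlitz action, which reduces to observing that $T \in K$ and that fields are closed under Frobenius.
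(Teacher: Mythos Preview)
Your proof is correct and follows essentially the same route as the paper's: both use additivity of the Carlitz action to get $\beta^{M},\alpha^{N}\in K(\alpha+\beta)$, then apply a B\'ezout identity $1=UM+VN$ in $R_T$ to recover $\alpha$ and $\beta$. The only cosmetic differences are that you explicitly justify closure of $K(\gamma)$ under the Carlitz action and recover $\alpha$ as $\gamma-\beta$ rather than by the symmetric B\'ezout computation.
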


\begin{proof}
Since $\alpha+\beta\in K(\alpha,\beta)$ it follows that
$K(\alpha+\beta)\subseteq K(\alpha,\beta)$. On the other hand
$(\alpha+\beta)^{M}= \alpha^{M}+\beta^{M}= a+\beta^{M}\in
K(\alpha+\beta)$ and $(\alpha+\beta)^{N}= \alpha^{N}+\beta^{N}=
\alpha^{N}+b\in K(\alpha+\beta)$. Therefore we have
$\beta^{M},\alpha^{N}\in K(\alpha+\beta)$.

Since there exist $S_{1},S_{2}\in R_{T}$ such that
$1=MS_{1}+NS_{2}$, we have
\begin{gather*}
\alpha=\alpha^{1}=\alpha^{MS_{1}+NS_{2}}=a^{S_{1}}
+(\alpha^{N})^{S_{2}}\in K(\alpha+\beta)\\
\intertext{and}
\beta=\beta^{1}=\beta^{MS_{1}+NS_{2}}= 
(\beta^{M})^{S_{1}}+ b^{S_{2}}\in K(\alpha+\beta).
\end{gather*}

Therefore $K(\alpha,\beta)= K(\alpha+\beta)$. Further,
$(\alpha+\beta)^{MN}= (\alpha^{M})^{N}+(\beta^{N})^{M}\in K$.
\end{proof}

Note that the above argument can be generalized to extensions $L/K$,
with $L=K(\alpha_{1},\ldots , \alpha_{s})$ so that there is
$M_{i}\in R_{T}$ with $\alpha^{M_{i}}_{i}= a_{i}\in K$ and the
polynomials $M_{i}$ are pairwise relatively prime.

\section{Some properties of radical cyclotomic extensions}\label{rad_ciclotomicas_props}

Radical cyclotomic extensions have some properties analogous to the
properties of classic cogalois extensions. We first need a lemma.
\begin{lema}\label{pureza}
Let $K\subseteq L\subseteq L^{\prime}$ be a tower of fields. Then
$L^{\prime}/K$ is pure if and only if $L^{\prime}/L$ and $L/K$ are
pure.
\end{lema}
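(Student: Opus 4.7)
The plan is to simply unwind the definition of \emph{pure}: an extension $F_2/F_1$ is pure iff for every monic irreducible $M\in R_T$, every $u\in F_2$ with $u^M=0$ lies in $F_1$. Once written this way, both implications reduce to chasing an element through the tower.

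For the forward implication, assume $L'/K$ is pure. To check $L/K$ is pure, take a monic irreducible $M$ and $u\in L$ with $u^M=0$; then $u\in L\subseteq L'$, so purity of $L'/K$ forces $u\in K$. To check $L'/L$ is pure, take $u\in L'$ with $u^M=0$; purity of $L'/K$ gives $u\in K\subseteq L$.

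For the converse, assume $L'/L$ and $L/K$ are pure. Given $u\in L'$ with $u^M=0$ for $M$ monic irreducible, purity of $L'/L$ yields $u\in L$; then purity of $L/K$ yields $u\in K$. This is exactly the definition of $L'/K$ being pure.

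The main (and essentially only) thing to make sure of is that the \emph{same} monic irreducible polynomial $M$ is carried through each step — which is automatic, since at no point do we change the relation $u^M=0$. So there is no real obstacle; the argument is a two-line application of the definition in each direction and should be written compactly.
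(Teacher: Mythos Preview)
Your proof is correct and is essentially identical to the paper's own argument: both directions are handled by chasing an element $u$ with $u^{M}=0$ (for $M$ monic irreducible) through the tower using nothing but the definition of purity. The only cosmetic difference is notation ($u$ and $M$ versus the paper's $\lambda_{P}$ and $P$).
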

\begin{proof}
Suppose that $L^{\prime}/K$ is pure, let $\lambda_{P}\in L^{\prime}$
and $P\in R_{T}$, monic and irreducible, such that $\lambda^{P}_{P}
= 0$. Then $\lambda_{P}\in K\subseteq L$, since $L^{\prime}/K$ is
pure. Therefore $L^{\prime}/L$ is pure. Similarly it is shown that
$L/K$ is pure.

Now suppose that $L^{\prime}/L$ and $L/K$ are pure. Let
$\lambda_{P}\in L^{\prime}$ and $P\in R_{T}$ be a monic and
irreducible polynomial such that $\lambda^{P}_{P} = 0$. Since
$L^{\prime}/L$ is pure, we have $\lambda_{P}\in L$ and since $L/K$
is pure, it follows that $\lambda_{P}\in K$.
\end{proof}

\begin{proposicion}\label{prop_cog}
Let $K\subseteq L\subseteq L^{\prime}$ be a tower of fields.
\begin{itemize}
\item[(1)] The following is an exact sequence of $R_{T}$-modules
\[
0\rightarrow \cogalois(L/K)\rightarrow 
\cogalois(L^{\prime}/K)\rightarrow \cogalois(L^{\prime}/L).
\]

\item[(2)] If $L^{\prime}/K$ is a radical cyclotomic extension, then
the extension $L^{\prime}/L$ is radical cyclotomic.

\item[(3)] If the extension $L^{\prime}/K$ is radical, and the extensions $L^{\prime}/L$ and $L/K$ are radical cyclotomic,
then $L^{\prime}/K$ is radical cyclotomic.
\end{itemize}
\end{proposicion}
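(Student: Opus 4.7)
My plan is to treat the three parts separately, reducing each to a formal verification built from the earlier lemma and the definitions.

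For (1), I would first construct the two maps at the module level: the inclusion $T(L/K)\hookrightarrow T(L'/K)$ (automatic since $K\subseteq L$ and any $M$ witnessing torsion over $K$ still works viewed in $L'$), and the inclusion $T(L'/K)\hookrightarrow T(L'/L)$ (again, if $u^{M}\in K\subseteq L$ then $u$ is also $L$-torsion). Passing to quotients gives the $R_T$-linear maps
\[
\cogalois(L/K)\longrightarrow \cogalois(L'/K)\longrightarrow \cogalois(L'/L),
\]
sending $u+K\mapsto u+K$ and $u+K\mapsto u+L$ respectively. Injectivity of the first arrow is immediate: if $u\in T(L/K)$ lies in $K$ when viewed in $L'$, then it already lies in $K$ inside $L$. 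For exactness at the middle term, the composition is zero because $T(L/K)\subseteq L$. Conversely, if $u+K$ maps to $0$ in $\cogalois(L'/L)$, then $u\in L$, and together with $u^M\in K$ this gives $u\in L\cap T(L'/K)=T(L/K)$, so $u+K$ is in the image of the first map. The key identity to record is $L\cap T(L'/K)=T(L/K)$.

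For (2), I would verify the three defining properties of a radical cyclotomic extension for $L'/L$ one at a time. Separability: $L'/K$ separable forces $L'/L$ separable. Purity: this is exactly the ``only if'' direction of Lemma \ref{pureza}. Radicality: if $L'=K(A)$ with $A\subseteq T(L'/K)$, then $L'=L(A)$ since $K\subseteq L$, and as noted above $T(L'/K)\subseteq T(L'/L)$, so $A\subseteq T(L'/L)$ witnesses that $L'/L$ is radical.

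Part (3) is in the same spirit. Radicality of $L'/K$ is assumed. Separability follows because the tower of two separable extensions is separable. Purity of $L'/K$ is the ``if'' direction of Lemma \ref{pureza} applied to the hypotheses that $L'/L$ and $L/K$ are pure. There is no genuine obstacle here; the content of the proposition is really that the three defining conditions of a radical cyclotomic extension behave well in towers, and the only non-cosmetic point is the transitivity of purity packaged in Lemma \ref{pureza} together with the elementary identity $L\cap T(L'/K)=T(L/K)$ used in part (1).
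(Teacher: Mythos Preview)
Your proposal is correct and follows essentially the same approach as the paper: part (1) is handled via the canonical map $x+K\mapsto x+L$ with kernel identified through $L\cap T(L'/K)=T(L/K)$, and parts (2) and (3) reduce separability and purity to the tower behavior (the latter via Lemma~\ref{pureza}) while radicality uses $T(L'/K)\subseteq T(L'/L)$. Your write-up is slightly more explicit about the exactness verification, but the argument is the same.
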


\begin{proof}
(1) The canonical homomorphism
\[
\cogalois(L^{\prime}/K)\rightarrow \cogalois(L^{\prime}/L),\,\, x+K\mapsto x+L
\]
is an $R_{T}$-homomorphism with kernel $\cogalois(L/K)$.
This proves that the sequence of $R_{T}$-modules
\[
0\rightarrow \cogalois(L/K)\rightarrow \cogalois(L^{\prime}/K)\rightarrow \cogalois(L^{\prime}/L)
\]
is exact.

(2) Since $L^{\prime}/K$ is separable, we have $L^{\prime}/L$ is
separable and, by Lemma \ref{pureza}, $L^{\prime}/L$ is pure.
Finally, since $T(L^{\prime}/K)\subseteq T(L^{\prime}/L)$, we have
that $L^{\prime}/L$ is radical.

(3) Since $L^{\prime}/L$ and $L/K$ are radical cyclotomic
extensions, we have that both are separable and pure. Therefore, by
Lemma \ref{pureza}, the extension $L^{\prime}/K$ is pure and
separable. Thus $L^{\prime}/K$ is a radical cyclotomic extension.
\end{proof}

We will see that 
the $R_{T}$-module $\cogalois(L/K)$ is finite for some extensions
$L/K$. To begin with, consider $L/K$ a Galois extension of function
fields, with Galois group $G = \Gal(L/K)$. Note that $\mu(L)$ is a
$G$-module, with the following action: given $\sigma\in G$ and $u\in
\mu(L)$ let $\sigma\cdot u = \sigma(u)$. Since the Carlitz-Hayes
action commutes with $\sigma$, $\sigma\cdot u$ is well defined.

\begin{definicion}\label{definicion_morfismo_cruzado}
{\rm{A function $f:G\rightarrow \mu(L)$ is said to be a {\it crossed
homomorphism} of $G$ with coefficients in $\mu(L)$ if for every
$\sigma,\tau\in G$ we have $f(\sigma\circ\tau) = f(\sigma) +
\sigma\cdot f(\tau)$.

The set of crossed homomorphisms is denoted by
\[
Z^{1}(G,\mu(L)).
\]
and $B^{1}(G,\mu(L))$ denote the subset of $Z^{1}(G,\mu(L))$, given
by $\{\chi\in Z^{1}(G,\mu(L))\mid\,\text{there exists $u\in \mu(L)$
such that $\chi=f_{u}$}\}$, where $f_{u}$ is the function defined by
\begin{equation}\label{ec7_1}
f_{u}(\sigma)=\sigma(u)-u\, \text{for each $\sigma\in G$.}
\end{equation}
}}
\end{definicion}

\begin{teorema}\label{finitud_TC/L}
Let $L/K$ be a finite Galois extension, with Galois group $G$. Consider the
function $\phi:\cogalois(L/K)\rightarrow Z^{1}(G,\mu(L))$, given by
$\phi(u + K) = f_{u}$ where $f_{u}(\sigma) = \sigma(u) - u$. Then
$\phi$ is an isomorphism of groups.
\end{teorema}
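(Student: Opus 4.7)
The plan is to verify the four standard steps: (i) $\phi$ is well-defined as a map into $Z^{1}(G,\mu(L))$ and independent of the representative in $T(L/K)/K$; (ii) $\phi$ is additive; (iii) $\phi$ is injective; and (iv) $\phi$ is surjective. Throughout I will use that $\sigma\in G$ commutes with the Carlitz--Hayes action: since $T\in k\subseteq K$ is fixed by $\sigma$ and since $\sigma$ commutes with the Frobenius $\varphi$, we have $\sigma(u^{M})=\sigma(u)^{M}$ for every $M\in R_{T}$ and $u\in L$; moreover the operator $M(\varphi+\mu_{T})$ is additive, so $(a+b)^{M}=a^{M}+b^{M}$.

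For well-definedness, take $u\in T(L/K)$ with $u^{M}=a\in K$. Then $\sigma(u)^{M}=\sigma(a)=a=u^{M}$, hence $(\sigma(u)-u)^{M}=0$, so $f_{u}(\sigma)\in\Lambda_{M}\subseteq\mu(L)$. The cocycle identity is a direct computation: $f_{u}(\sigma\tau)=\sigma\tau(u)-u=\bigl(\sigma\tau(u)-\sigma(u)\bigr)+\bigl(\sigma(u)-u\bigr)=\sigma\cdot f_{u}(\tau)+f_{u}(\sigma)$. Finally, if $u-u'\in K$ then $f_{u}-f_{u'}\equiv 0$, so $\phi$ descends to $\cogalois(L/K)$. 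Additivity of $\phi$ follows immediately from $f_{u+v}(\sigma)=\sigma(u+v)-(u+v)=f_{u}(\sigma)+f_{v}(\sigma)$, and injectivity from the observation that $f_{u}\equiv 0$ means $\sigma(u)=u$ for all $\sigma\in G$, hence $u\in L^{G}=K$.

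The main obstacle is surjectivity. Given $f\in Z^{1}(G,\mu(L))$, I view $f$ as a cocycle with values in the additive group $L^{+}$, since $\mu(L)\subseteq L$ and the $G$-action is the restriction of the additive Galois action. Because $L/K$ is a finite Galois extension, the additive form of Hilbert's Theorem 90 gives $H^{1}(G,L^{+})=0$, so there exists $u\in L$ with $f(\sigma)=\sigma(u)-u$ for every $\sigma\in G$. It remains to verify that $u\in T(L/K)$. Since $G$ is finite, $f$ takes finitely many values in $\mu(L)$, and we may choose $M\in R_{T}\setminus\{0\}$ such that $f(\sigma)^{M}=0$ for all $\sigma\in G$ (take the least common multiple of the orders of the $f(\sigma)$). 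Using additivity of the Carlitz action and its compatibility with $\sigma$, we get
\[
0=f(\sigma)^{M}=\bigl(\sigma(u)-u\bigr)^{M}=\sigma(u)^{M}-u^{M}=\sigma(u^{M})-u^{M},
\]
so $u^{M}\in L^{G}=K$ and hence $u\in T(L/K)$. Therefore $\phi(u+K)=f$, completing the surjectivity and thus the proof that $\phi$ is a group isomorphism.

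The only non-routine ingredient is the additive Hilbert 90; everything else is bookkeeping made possible by the fact that $\sigma$ commutes with $M(\varphi+\mu_{T})$. I would cite additive Hilbert 90 from a standard reference (e.g.\ via the normal basis theorem) rather than reprove it.
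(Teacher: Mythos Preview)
Your proof is correct and follows essentially the same approach as the paper: verify that $f_{u}$ is a $\mu(L)$-valued cocycle, obtain injectivity from $L^{G}=K$, and deduce surjectivity from the additive Hilbert 90 together with a common annihilator for the finitely many values of the cocycle. Your surjectivity step is in fact slightly more streamlined than the paper's, which passes through the Galois closure of $K(u)/K$ and the quotient $G/H$ before taking the product of annihilators; since $G$ is already finite, your direct argument suffices.
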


\begin{proof}
Let $\theta:T(L/K)\rightarrow Z^{1}(G,\mu(L))$ be given by
$\theta(u) = f_{u}$. Observe that $f_{u}(\sigma\circ\tau) =
\sigma(\tau(u)) - u$. Further $f_{u}(\sigma) = \sigma(u) - u$ and
$f_{u}(\tau) = \tau(u) - u$. If we apply $\sigma$ to this last
equality we obtain $\sigma(f_u(\tau)) = \sigma(\tau(u)) - \sigma(u)$.
The sum of this equation with the first one gives that $f_{u}$ is a
crossed homomorphism. We also note that if $\sigma\in G$ then
$f_{u}(\sigma) = \sigma(u) - u$ belongs to $\mu(L)$, since there is
an $N\in R_{T}$ such that $u^{N}\in K$. Therefore $(\sigma(u) -
u)^{N} = (\sigma(u))^{N} - u^{N} = \sigma(u^{N}) - u^{N} = 0$.

Furthermore, $\theta(u + v) = f_{u + v}$ and $f_{u + v}(\sigma) =
\sigma(u + v) - (u + v) = \sigma(u) + \sigma(v) - u - v = \sigma(u)
- u + \sigma(v) - v$, that is, $\theta(u + v) = \theta(u) +
\theta(v)$. Therefore $\theta$ is a homomorphism. If $u\in
\ker(\theta)$, then $\theta(u) = f_{u} = 0$. That is, $f_{u}(\sigma)
= \sigma(u) - u = 0$ and since $L/K$ is a Galois extension, it
follows that $u\in K$.

Conversely, if $u\in K$ we have $\theta(u) = 0$. Thus $\ker(\theta)
= K$ and therefore we obtain a monomorphism of abelian groups
\[
\phi:\cogalois(L/K)\rightarrow Z^{1}(G,\mu(L)).
\]

On the other hand, $Z^{1}(G,\mu(L))\subseteq Z^{1}(G,L)$ and from
the additive Hilbert Theorem 90, it follows that
\[
Z^{1}(G,L) = B^{1}(G,L)
= \{f\in Z^{1}(G,L)\mid \text{there exists $u\in L$ such that $f =
f_{u}$}\}.
\]

Hence, given $f\in Z^{1}(G,\mu(L))$ there exists $u\in L$ such that
$f = f_{u}$. Therefore for each $\sigma\in G$, $f(\sigma) =
f_{u}(\sigma) = \sigma(u) - u \in \mu(L)$. 
Let $K^{\prime}$ be the Galois closure of
$K(u)/K$. Then $K\subseteq K(u)\subseteq K^{\prime}\subseteq
L$.

Let $H = \Gal(L/K^{\prime})$. We have that $H\lhd G$ and that
$\textrm{card}(G/H)$ is finite. The conjugates of $u$ in
$K^{\prime}$ are $\{\overline{\sigma}(u)\mid \overline{\sigma}\in
\overline{G} = G/H\}$, thus
\[
\overline{\sigma}(u) = \sigma(u) = u + z_{\sigma}
\text{ where $z_{\sigma}\in \mu(L)$}.
\]

Since there is only a finite number of elements
$\overline{\sigma}\in \overline{G} = \{\sigma_{1}H,\ldots,
\sigma_{s}H\}$ there exist $N_{\sigma_{1}},\ldots, N_{\sigma_{s}}\in
R_{T}$ such that $(z_{\sigma_{i}})^{N_{\sigma_{i}}} = 0$. Let $N =
N_{\sigma_{1}}\cdots N_{\sigma_{s}}$. Then
\[
\overline{\sigma}(u^{N})= (u + z_{\sigma})^{N} 
= u^{N} + z_{\sigma}^{N} = u^{N} +
(z^{N_{\sigma}}_{\sigma})^{P_{\sigma}}= u^{N}.
\]

Since the extension $K^{\prime}/K$ is Galois we have that $u^{N}\in
K$, that is, $\phi$ is surjective.
\end{proof}

From Theorem \ref{finitud_TC/L} we obtain

\begin{proposicion}\label{dummit}
Let $E/F$ be a finite Galois extension, $\Gamma = \Gal(E/F)$ and let
$\Delta$ be a normal subgroup of $\Gamma$. Then the sequence of abelian
groups
\[
0\rightarrow Z^{1}(\Gamma/\Delta, \mu(E/F)^{\Delta})
\stackrel{\theta_{1}}{\rightarrow} Z^{1}(\Gamma,\mu(E/F))
\stackrel{\theta_{2}}{\rightarrow} Z^{1}(\Delta,\mu(E/F))
\]
is exact, where $\mu(E/F)^{\Delta}= \{\zeta\in \mu(E/F)\mid
\sigma(\zeta) = \zeta,\, \forall \sigma\in \Delta\}$.
\end{proposicion}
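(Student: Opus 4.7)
The plan is to identify $\theta_1$ as \emph{inflation} and $\theta_2$ as \emph{restriction}, and then carry out a direct diagram chase at the level of 1-cocycles (not cohomology classes). For $f\in Z^1(\Gamma/\Delta,\mu(E/F)^\Delta)$ I set
\[
\theta_1(f)(\sigma)=f(\sigma\Delta)\qquad (\sigma\in\Gamma),
\]
and for $g\in Z^1(\Gamma,\mu(E/F))$ I set $\theta_2(g)=g|_\Delta$. A short check, using the fact that the action of $\sigma\Delta$ on $\mu(E/F)^\Delta$ agrees with the restriction of the $\sigma$-action, confirms that $\theta_1(f)$ satisfies the cocycle identity on $\Gamma$; for $\theta_2(g)$ the cocycle identity is just the cocycle identity of $g$ restricted to $\Delta$.

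The two easy verifications come first. If $\theta_1(f)=0$, then $f(\sigma\Delta)=0$ for every $\sigma$, so $f\equiv 0$ and $\theta_1$ is injective. For $\theta_2\circ\theta_1=0$, the identity $f(1)=0$ for any crossed homomorphism (from $f(1)=f(1\cdot 1)=f(1)+1\cdot f(1)$) yields $\theta_1(f)(\delta)=f(\delta\Delta)=f(\Delta)=0$ for every $\delta\in\Delta$.

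The crux is the inclusion $\ker\theta_2\subseteq\im\theta_1$. Given $g\in Z^1(\Gamma,\mu(E/F))$ with $g|_\Delta=0$, two things must be established: (a) $g$ is constant on each right coset $\sigma\Delta$, and (b) each value $g(\sigma)$ lies in $\mu(E/F)^\Delta$. For (a), the cocycle identity gives $g(\sigma\delta)=g(\sigma)+\sigma\cdot g(\delta)=g(\sigma)$ for all $\sigma\in\Gamma$ and $\delta\in\Delta$. For (b), I would invoke the normality of $\Delta$ to write $\delta\sigma=\sigma\delta'$ for some $\delta'\in\Delta$ and then compute
\[
\delta\cdot g(\sigma)=g(\delta)+\delta\cdot g(\sigma)=g(\delta\sigma)=g(\sigma\delta')=g(\sigma)+\sigma\cdot g(\delta')=g(\sigma).
\]
With (a) and (b) in hand, $\bar g(\sigma\Delta):=g(\sigma)$ is a well-defined function $\Gamma/\Delta\to\mu(E/F)^\Delta$, its cocycle identity is inherited from that of $g$, and $\theta_1(\bar g)=g$ by construction.

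The only non-formal point is step (b): without normality there is no reason for the values $g(\sigma)$ to be $\Delta$-fixed, so this is precisely where the hypothesis $\Delta\lhd\Gamma$ is genuinely used. All remaining steps are standard manipulations of the cocycle identity and of the induced action of $\Gamma/\Delta$ on the $\Delta$-invariants.
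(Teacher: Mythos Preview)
Your proof is correct and follows essentially the same approach as the paper: both identify $\theta_1$ and $\theta_2$ as inflation and restriction and verify exactness by direct manipulation of the cocycle identity. Your treatment is in fact more explicit than the paper's, which compresses the well-definedness of $f'$ into the phrase ``by the properties of $f$'' and verifies $\Delta$-invariance via the slightly different computation $\tau(f'(\overline{\sigma}))=\tau(f(\tau^{-1}\sigma))=f(\sigma)$; your argument in step~(b), writing $\delta\sigma=\sigma\delta'$ and expanding both $g(\delta\sigma)$ and $g(\sigma\delta')$, makes the use of normality more transparent.
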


\begin{proof}
Suppose that $\theta_{1}(f) = 0$. Then if $\overline{\sigma}\in
\Gamma/\Delta$, we have $f(\overline{\sigma}) =
\theta_{1}(f)(\sigma) = 0$. Thus $\theta_{1}$ is injective. On the
other hand, ${\rm im}(\theta_{1})\subseteq \ker(\theta_{2})$ since
if $f = \theta_{1}(f^{\prime})$, with $f^{\prime}\in
Z^{1}(\Gamma/\Delta, \mu(E/F)^{\Delta})$, we have
$\theta_{2}(f)(\sigma) = \theta_{1}(f^{\prime})(\sigma) =
f^{\prime}(\overline{\sigma}) = 0$.

Now if $f\in \ker(\theta_{2})$ then for each $\sigma\in \Delta$, we
have $f(\sigma) = 0$. We may define
$f^{\prime}:\Gamma/\Delta\rightarrow \mu(E/F)$ by
$f^{\prime}(\overline{\sigma}) = f(\sigma)$. By the properties of
$f$, $f^{\prime}$ is well defined and it is a crossed homomorphism.
Finally, if $\tau\in \Delta$ then
$\tau(f^{\prime}(\overline{\sigma})) = \tau(f(\tau^{-1}\circ\sigma))
= \tau(f(\sigma)) = f^{\prime}(\overline{\sigma})$, that is,
$f^{\prime}\in Z^{1}(\Gamma/\Delta, \mu(E/F)^{\Delta})$ and $f =
\theta_{1}(f^{\prime})$.
\end{proof}

\begin{corolario}\label{cor_finitud_TC/L}
Let $L/K$ be a finite Galois extension. If the cardinality of
$\mu(L)$ is finite then the $R_{T}$-module $\cogalois(L/K)$ is
finite.
\end{corolario}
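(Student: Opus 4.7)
The plan is to invoke Theorem \ref{finitud_TC/L} directly: it provides an isomorphism of groups
\[
\phi:\cogalois(L/K)\longrightarrow Z^{1}(G,\mu(L)),
\]
where $G=\Gal(L/K)$. So it suffices to bound the cardinality of $Z^{1}(G,\mu(L))$.

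The key observation is purely set-theoretic. Because $L/K$ is a finite Galois extension, $G$ is a finite group. By hypothesis, $\mu(L)$ is finite as well. Hence the set $\mathrm{Maps}(G,\mu(L))$ of all functions from $G$ to $\mu(L)$ has cardinality $|\mu(L)|^{|G|}<\infty$. Since $Z^{1}(G,\mu(L))$ is by definition a subset of $\mathrm{Maps}(G,\mu(L))$ (consisting of those functions satisfying the crossed homomorphism cocycle identity), it is finite, with
\[
\operatorname{card}\bigl(Z^{1}(G,\mu(L))\bigr)\leq |\mu(L)|^{|G|}.
\]

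Combining this with the isomorphism $\phi$, we conclude $\cogalois(L/K)$ is finite. The argument requires no additional combinatorial work beyond Theorem \ref{finitud_TC/L}, so there is no real obstacle; the corollary is essentially a corollary in the strict sense.
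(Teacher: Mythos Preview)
Your proof is correct and is essentially the same as the paper's, which simply says the corollary follows from Theorem \ref{finitud_TC/L}; you have just made explicit the obvious counting step that $Z^{1}(G,\mu(L))\subseteq \mathrm{Maps}(G,\mu(L))$ is finite when both $G$ and $\mu(L)$ are finite.
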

\begin{proof}
It follows from Theorem \ref{finitud_TC/L}.
\end{proof}

\section{Some structure results of radical
cyclotomic extensions}\label{teoremas_de_estructura}

\begin{proposicion}\label{cogalois_disjunto}
Let $L/K$ be a field extension such that $[L:K]=\ell$ with $\ell$ a
prime different from $p = \caracteristica(k)$. Then $L/K$ is not a
radical cyclotomic extension.
\end{proposicion}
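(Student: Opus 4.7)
The plan is to argue by contradiction: suppose $L/K$ is radical cyclotomic with $[L:K] = \ell$ a prime different from $p$. The first step is a reduction to a Carlitz radical of irreducible type. Radicality and the primality of $\ell$ give $\alpha \in T(L/K) \setminus K$ with $L = K(\alpha)$ and $\alpha^M = a \in K$ for some nonzero $M \in R_T$; choose such an $M$ of smallest degree and factor $M = P_1^{e_1} \cdots P_r^{e_r}$ into monic irreducibles. Setting $\beta = \alpha^{M/P_1}$ yields $\beta^{P_1} = a$, and minimality of $\deg M$ forces $\beta \notin K$, so $L = K(\beta)$ since $\ell$ is prime. Relabelling, we may assume from now on that $\alpha^P = a \in K$ with $P$ monic irreducible.

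Next we bring in the conjugates of $\alpha$. Since the Carlitz polynomial $X^P \in K[X]$ has derivative equal to $P \neq 0$, the polynomial $X^P - a$ is separable of degree $q^{\deg P}$, and its roots in $\overline{k}$ are $\{\alpha + \lambda : \lambda \in \Lambda_P\}$. The $\ell$ distinct conjugates of $\alpha$ thus differ by nonzero elements of $\Lambda_P$, so the Galois closure $\widetilde{L}$ of $L/K$ contains a nonzero $P$-torsion element. Because $\Lambda_P$ is a simple $R_T$-module (being one-dimensional over the field $R_T/(P)$), this forces $\Lambda_P \subseteq \widetilde{L}$, and consequently $\widetilde{L} = K(\Lambda_P)(\alpha)$.

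The proof then splits on the intermediate field $F := L \cap K(\Lambda_P)$, for which $[F:K] \in \{1, \ell\}$. If $F = K$, then $L$ and $K(\Lambda_P)$ are linearly disjoint over $K$, giving $[\widetilde{L}:K(\Lambda_P)] = \ell$; moreover $\widetilde{L}/K(\Lambda_P)$ is a Carlitz--Kummer extension, and since its Galois group fixes $\Lambda_P$ pointwise, the cocycle relation collapses and $\sigma \mapsto \sigma(\alpha) - \alpha$ becomes an injective group homomorphism from $\Gal(\widetilde{L}/K(\Lambda_P))$ into $(\Lambda_P, +)$. Since $|\Lambda_P| = q^{\deg P}$ is a power of $p$ and $\ell \neq p$ is prime, this is a contradiction, and pureness has not yet been used.

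The remaining case $F = L$ means $L \subseteq K(\Lambda_P)$, so $L/K$ is automatically Galois (a subextension of an abelian extension) with group of order $\ell$. For each $\sigma \in \Gal(L/K)$, the difference $\sigma(\alpha) - \alpha$ lies in $L \cap \Lambda_P$; pureness (applied to the irreducible $P$) yields $L \cap \Lambda_P \subseteq K \cap \Lambda_P$, and simplicity of $\Lambda_P$ forces $K \cap \Lambda_P$ to be $0$ or all of $\Lambda_P$. The first alternative makes $\sigma(\alpha) = \alpha$ for every $\sigma$, i.e., $\alpha \in K$, and the second collapses $K(\Lambda_P) = K$ and therefore $L = K$; both contradict our hypotheses. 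The main obstacle is coordinating the Galois closure with the $p$-group structure of $\Lambda_P$ and invoking pureness precisely in the case where the Kummer-style disjointness fails.
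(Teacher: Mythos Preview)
Your proof is correct, but it takes a genuinely different route from the paper's argument. After the common reduction to $L=K(\alpha)$ with $\alpha^{P}=a\in K$ for an irreducible $P$, the paper proceeds by a single trace computation: writing $\mathrm{irr}(\alpha,K)=\prod_{B}(X-(\alpha+\lambda_{P}^{B}))$ and reading off the coefficient of $X^{\ell-1}$ gives $\ell\alpha+\lambda_{P}^{D}\in K$ with $D=\sum B$. Since $\ell$ is invertible, $\lambda_{P}^{D}$ is forced to be a nonzero element of $\Lambda_{P}\cap L$, and purity immediately yields $\lambda_{P}^{D}\in K$, whence $\alpha\in K$. No Galois closure, no case split. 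Your argument instead passes to $\widetilde{L}=K(\Lambda_{P},\alpha)$ and splits on $L\cap K(\Lambda_{P})$: in Case~1 you exploit the injective Kummer map $\Gal(\widetilde{L}/K(\Lambda_{P}))\hookrightarrow\Lambda_{P}$ to get a prime-to-$p$ subgroup of a $p$-group (this case is in fact vacuous---your argument is really proving that one is always in Case~2), and only in Case~2 do you invoke purity. The paper's approach is shorter and more elementary; yours is more structural and makes explicit why the obstruction is the $p$-group nature of $\Lambda_{P}$, which resonates with the techniques used later in the proof of Theorem~\ref{tdim_p}.
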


\begin{proof}
Suppose that $L/K$ is radical cyclotomic. Then $\cogalois(L/K)$ is
non-trivial, since there is a subset $A\subseteq T(L/K)$ such that
$L=K(A)$. Let $\overline{\alpha}\in \cogalois(L/K)$ be different
from $0$, that is, $\alpha\notin K$. Let $M\in
R_{T}$ be such that $\alpha^{M}\in K$. We may assume that $M$ is a
monic polynomial and that it is the minimum degree polynomial with
such property, that is the order of $\overline{\alpha}$ is $M$.
Replacing $\alpha$ if necessary, we may assume that $M=Q$
is irreducible and $\alpha^{Q} = a\in K$.

Let $f(X) = {\rm irr}(\alpha,K)\in K[X]$. Since $\alpha^{Q}=a$, we
have $f(X)\mid X^{Q}- a$.
Therefore $f(X) = \prod(X - (\alpha +
\lambda^{B}_{Q}))$, for some polynomials $B\in R_{T}$. Observe that
$\grado(f(X))= \ell$, since $L=K(\alpha)$, and that $\sum(\alpha +
\lambda^{B}_{Q})= \ell\alpha + \lambda^{\sum B}_{Q}\in K$. On the
other hand, since $\ell\neq p$ we have $\ell\neq 0$ in $K$.
Therefore we have that $D = \sum B$ is non-zero, since
otherwise $\alpha\in K$ and the degree of $D$
is less than the degree of $Q$.

We have $\lambda^{D}_{Q}\notin K$. Since $\lambda^{D}_{Q}\in L$,
by purity, $\lambda^{D}_{Q}\in K$, which is a
contradiction. Therefore $L/K$ is not a radical cyclotomic
extension.
\end{proof}

\begin{corolario}\label{cogalois_disjuntocor1}
Let $L/K$ be a finite Galois extension such that $[L:K]= p^{s}n$, with
$p\nmid n$ and $n>1$, where $p=\caracteristica(K)$. Then $L/K$ is
not a radical cyclotomic extension.
\end{corolario}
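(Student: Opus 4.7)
The plan is to reduce to the already-proved prime-degree case (Proposition \ref{cogalois_disjunto}) by passing to a suitable intermediate field. Assume for contradiction that $L/K$ is radical cyclotomic. Since $n>1$, we may pick a prime $\ell\mid n$ with $\ell\neq p$; then $\ell\mid [L:K]=|\Gal(L/K)|$.

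By Cauchy's theorem applied to $G=\Gal(L/K)$, there is an element $\sigma\in G$ of order $\ell$. Setting $H=\langle\sigma\rangle$ and $F=L^{H}$, the Galois correspondence yields $K\subseteq F\subseteq L$ with $L/F$ Galois of degree $[L:F]=|H|=\ell$, a prime distinct from $p$.

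Now invoke part (2) of Proposition \ref{prop_cog}: since $L/K$ is radical cyclotomic and $F$ is an intermediate field, the extension $L/F$ is also radical cyclotomic. But Proposition \ref{cogalois_disjunto} says no finite field extension of prime degree $\ell\neq p$ can be radical cyclotomic, giving the desired contradiction.

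The main (and only) subtlety is making sure the intermediate field $F$ has \emph{exactly} prime degree $\ell$ below $L$ so that Proposition \ref{cogalois_disjunto} applies. This is precisely what Cauchy's theorem guarantees, so no further work is needed; Galois-ness of $L/F$ is automatic because $H$ is a subgroup of the Galois group $G$. The rest is a direct application of the tower result in Proposition \ref{prop_cog}(2).
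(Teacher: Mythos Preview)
Your proof is correct and follows essentially the same route as the paper: use Cauchy's theorem to find a cyclic subgroup $H$ of prime order $\ell\neq p$, pass to the fixed field $L^{H}$, apply Proposition~\ref{prop_cog}(2) to conclude $L/L^{H}$ is radical cyclotomic, and contradict Proposition~\ref{cogalois_disjunto}. The only difference is notational ($F$ versus $L'$).
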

\begin{proof}
From Cauchy's theorem, the group $G = \Gal(L/K)$ contains an element
of order $\ell$, say $g$, where $\ell$ is a prime that divides $n$.
Consider the subgroup $H = (g)$ of $G$. If $L/K$ were a radical
cyclotomic extension then, by Proposition \ref{prop_cog}, the
extension $L/L^{\prime}$, where $L^{\prime} = L^{H}$, would be a
radical cyclotomic extension. Since $[L:L^{\prime}] = \ell$ by
Proposition \ref{cogalois_disjunto} such extension is not a radical
cyclotomic extension. Therefore $L/K$ is not a radical cyclotomic
extension.
\end{proof}

\begin{corolario}\label{estructura1}
If $L/K$ is a finite Galois radical cyclotomic extension, then $[L:K]=
p^{s}$ with $p=\caracteristica(K)$, for some $s\in {\mathbb{N}}$.
\hfill $\square$
\end{corolario}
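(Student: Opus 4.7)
The plan is to derive this corollary as a direct contrapositive of Corollary \ref{cogalois_disjuntocor1}. Writing $[L:K] = p^{s} n$ with $p \nmid n$ and $s \geq 0$, I only need to rule out the case $n > 1$. If $n > 1$, then $[L:K]$ has the precise form prohibited by Corollary \ref{cogalois_disjuntocor1}, so $L/K$ could not be radical cyclotomic, contradicting the hypothesis. Hence $n = 1$ and $[L:K] = p^{s}$.

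The only subtlety worth flagging is the edge case $s = 0$, i.e.\ $[L:K] = 1$, corresponding to the trivial extension $L = K$. This case is vacuously radical cyclotomic and fits the conclusion with $s = 0$ (if one prefers $s \in \mathbb{N}$ to mean $s \geq 1$, one simply notes that a trivial extension is uninteresting and allow $s=0$). So the corollary reads cleanly once one allows $s \geq 0$.

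There is no real obstacle here; the entire content was already carried by Proposition \ref{cogalois_disjunto} (the prime-degree case) and its Cauchy-theorem bootstrap in Corollary \ref{cogalois_disjuntocor1}. The present statement is only a repackaging of that result in its final form: finite Galois radical cyclotomic extensions are $p$-extensions.
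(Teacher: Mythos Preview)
Your proposal is correct and matches the paper's approach exactly: the paper gives no proof at all for this corollary (just a $\square$), indicating it follows immediately from Corollary~\ref{cogalois_disjuntocor1} by the very contrapositive you describe. Your remark on the edge case $s=0$ is a harmless clarification of the convention for $\mathbb{N}$.
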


\begin{lema}\label{pureza_p}
Let $L/K$ be an extension such that $[L:K] = p^{s}$ with $s\in
{\mathbb{N}}$ and $p=\caracteristica(K)$. Then $L/K$ is pure.
\end{lema}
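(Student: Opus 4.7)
The plan is to pick an arbitrary irreducible monic $P\in R_T$ and an element $u\in L$ with $u^P=0$, and argue that $u\in K$. The case $u=0$ is trivial, so I would assume $u\neq 0$ from the start.

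First I would invoke the structural fact about Carlitz torsion already used in Example \ref{ejemplo4}: since $P$ is irreducible and $\Lambda_P$ is a cyclic $R_T$-module on which $R_T/(P)$ acts as a field, every non-zero element of $\Lambda_P$ is a generator. Hence $u$ is a primitive $P$-Carlitz root, and by Theorem \ref{ciclotomico_carlitz} one has
\[
[k(u):k]=[k(\Lambda_P):k]=\Phi(P)=q^{\deg P}-1.
\]
In particular $[k(u):k]$ is coprime to $p$, since $q^{\deg P}-1\equiv -1\pmod p$.

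Next I would run the standard coprime-degree argument. Because $k\subseteq K$, the minimal polynomial of $u$ over $K$ divides its minimal polynomial over $k$, so $[K(u):K]$ divides $[k(u):k]=\Phi(P)$ and is therefore coprime to $p$. On the other hand, $[K(u):K]$ divides $[L:K]=p^s$, so it is a power of $p$. The only possibility is $[K(u):K]=1$, i.e.\ $u\in K$.

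There is no real obstacle: the heart of the proof is just combining the fact that non-zero elements killed by an irreducible $P$ lie in $\Lambda_P\setminus\{0\}$ and hence generate it (so their degree over $k$ is $\Phi(P)$, coprime to $p$) with the multiplicativity of degrees in the tower $K\subseteq K(u)\subseteq L$. The only thing worth flagging is that one must verify coprimality of $\Phi(P)$ and $p$, which is immediate from $\Phi(P)=q^{\deg P}-1$.
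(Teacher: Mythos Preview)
Your proof is correct and follows essentially the same approach as the paper: both show that $[K(u):K]$ divides both $p^s$ and $\Phi(P)=q^{\deg P}-1$, forcing $[K(u):K]=1$. The only cosmetic difference is that the paper obtains the divisor $\Phi(P)$ via the Galois correspondence (identifying $\Gal(K(\lambda_P)/K)$ with $\Gal(k(\lambda_P)/\widetilde{K})$ for $\widetilde{K}=K\cap k(\lambda_P)$), whereas you obtain it directly from divisibility of minimal polynomials; the underlying idea is identical.
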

\begin{proof}
Assume that $L/K$ is not pure. Thus there exists $a=\lambda_{P}\in
L$ with $P\in R_{T}$ an irreducible polynomial such that $a^{P}= 0$ and
$a\notin K$. Consider the following diagram
\[
 \xymatrix{& L\ar@{-}[d]\\
 k(\lambda_{P})\ar@{-}[d]\ar@{-}[r]  & k(\lambda_{P})K = K(\lambda_{P}) \ar@{-}[d]\\
k \ar@{-}[r] & K}
\]

Let $\widetilde{K}=K\cap k(\lambda_{P})$. Then by Galois Theory we
have that $K(\lambda_{P})/K$ is a Galois extension with Galois group
$G$ isomorphic to $\Gal(k(\lambda_{P})/\widetilde{K})$. Thus
\[
\mid G\mid\mid [L:K]=p^{s}\,\, \text{and on the other hand} \mid G\mid\mid (q^{d}-1)
\]
where $d=\grado(P)$. Therefore $\mid G\mid = 1$, that is,
$\lambda_{P}\in K$.
\end{proof}

\begin{ejemplo}\label{schu_cogalois}
{\rm{A {\it Carlitz-Kummer} extension, see \cite{Schultheis}, is an
extension $L/K$ such that

(1) $K$ is a finite extension of $k(\Lambda_{M})$ for some $M\in
R_{T}$.

(2) $L$ is the splitting field of the polynomial
$f(u)=u^{M}-z\in K[u]$ over $K$, where $z\in K\setminus K^{M}$.

From Proposition 2.3 (4) of \cite{Schultheis}, it follows that
$[L:K]=p^{t}$, where $p=\caracteristica(K)$. Now Lemma
\ref{pureza_p} shows that Carlitz-Kummer extensions are radical
cyclotomic extensions.\hfill $\square$}}
\end{ejemplo}

Furthermore, from the above results, we obtain

\begin{teorema}\label{tdim_p_galois}
A finite Galois extension, $L/K$ is radical cyclotomic if and only if it is
radical, separable and $[L:K]=p^{s}$ with $s\in {\mathbb{N}}$ and
$p=\caracteristica(K)$. \hfill $\square$
\end{teorema}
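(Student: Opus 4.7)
The plan is to observe that this theorem is essentially a repackaging of results already established in the paper, so the proof reduces to an application of Corollary~\ref{estructura1} in one direction and Lemma~\ref{pureza_p} in the other.

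For the forward implication, assume $L/K$ is a finite Galois radical cyclotomic extension. By the very definition of radical cyclotomic (Definition~\ref{definicion1_c2}), the extension $L/K$ is radical and separable. To obtain the degree condition $[L:K] = p^{s}$, I would cite Corollary~\ref{estructura1} directly, which is the assertion that any finite Galois radical cyclotomic extension has degree a power of $p = \caracteristica(K)$. There is nothing further to check.

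For the converse, assume $L/K$ is a finite Galois extension that is radical, separable, and satisfies $[L:K] = p^{s}$. To conclude that $L/K$ is radical cyclotomic, the only missing ingredient is purity (conditions (1) and (2) of Definition~\ref{definicion1_c2} are given). Purity follows immediately from Lemma~\ref{pureza_p}, which states that any extension of degree a power of $p$ over $K$ is automatically pure. Hence $L/K$ satisfies all three conditions of the definition of radical cyclotomic.

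The main ``obstacle'' here is really only to be careful that the hypotheses match exactly what the cited results require: Corollary~\ref{estructura1} assumes the extension is finite Galois radical cyclotomic, and Lemma~\ref{pureza_p} assumes only the degree is a power of $p$ (no Galois hypothesis needed for purity). Since both cited results are available, the theorem follows with no additional argument, and the proof will be essentially a one-line reference to each direction.
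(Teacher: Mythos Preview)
Your proposal is correct and matches the paper's own treatment exactly: the paper marks this theorem with a $\square$ and introduces it with ``from the above results, we obtain,'' meaning it is intended as an immediate consequence of Corollary~\ref{estructura1} for the forward direction and Lemma~\ref{pureza_p} for the converse. There is nothing to add.
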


We have the following theorem.

\begin{teorema}\label{tdim_p}
If $L/K$ is a finite radical cyclotomic extension, then $[L:K]=p^{n}$ for
some $n\geq 0$, where $p=\caracteristica(K)$.
\end{teorema}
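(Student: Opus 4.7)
The plan has two reductions, followed by a Galois-closure computation that invokes the already-proved Galois case (Corollary \ref{estructura1}). First, since $L/K$ is finite and radical, write $L=K(\alpha_1,\ldots,\alpha_r)$ with $\alpha_i\in T(L/K)$, and let $K_i=K(\alpha_1,\ldots,\alpha_i)$. Each step $K_i/K_{i-1}$ is radical (generated by $\alpha_i$), separable, and pure (any Carlitz root in $K_i\subseteq L$ lies in $K\subseteq K_{i-1}$ by purity of $L/K$), so it is radical cyclotomic. By multiplicativity of degrees, it suffices to treat the case $L=K(\alpha)$ with $\alpha\in T(L/K)\setminus K$ and $\alpha^M=a\in K$.

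Second, induct on $\deg M$. Factor $M=QM'$ with $Q$ monic irreducible; if $\deg M'\ge 1$, the element $\beta=\alpha^{M'}$ satisfies $\beta^Q=a\in K$, and the tower $K\subseteq K(\beta)\subseteq L$ breaks the problem into $K(\beta)/K$ (irreducible exponent $Q$) and $L/K(\beta)$ (exponent $M'$ of smaller degree), both radical cyclotomic by the same argument. Hence we may assume $L=K(\alpha)$, $\alpha^Q=a\in K$, $Q$ monic irreducible, and $\alpha\notin K$; note $a\ne 0$, else $\alpha\in\Lambda_Q\setminus\{0\}$ would violate purity.

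In this main case, the differences of $K$-conjugates of $\alpha$ lie in $\Lambda_Q$ and generate a nonzero $R_T$-submodule of the simple module $\Lambda_Q\cong R_T/Q$, so they fill out all of $\Lambda_Q$. The Galois closure is therefore $\tilde L=L(\Lambda_Q)=K(\Lambda_Q)(\alpha)$. Put $G=\Gal(\tilde L/K)$, $C=\Gal(K(\Lambda_Q)/K)=(R_T/Q)^\ast$ and $N=\Gal(\tilde L/K(\Lambda_Q))$, giving $1\to N\to G\to C\to 1$. The map $n\mapsto n(\alpha)-\alpha$ is an injective group homomorphism $N\hookrightarrow\Lambda_Q$, and a direct conjugation computation shows the image is stable under the Carlitz $C$-action on $\Lambda_Q$, hence an $R_T$-submodule of $\Lambda_Q$; by simplicity, this image is either $0$ or all of $\Lambda_Q$.

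If the image is $0$, then $N=1$ and $L\subseteq\tilde L=K(\Lambda_Q)$, so $L/K$ is an abelian Galois radical cyclotomic extension; by Corollary \ref{estructura1}, $[L:K]$ is a power of $p$ dividing $\Phi(Q)$, which is coprime to $p$, forcing $L=K$, a contradiction. Hence the image is $\Lambda_Q$ and $|N|=q^{\deg Q}$ is a power of $p$. Letting $H=\Gal(\tilde L/L)$, the faithfulness of $N$ on $\alpha$ gives $H\cap N=1$, so $H\hookrightarrow C$; for each $c\in C$ a lift $g$ with $g(\alpha)=\alpha+\mu$ can be corrected by $n\in N$ with $n(\alpha)-\alpha=-\mu^{c^{-1}}$ (available since $N\twoheadrightarrow\Lambda_Q$ and the $C$-action is invertible), yielding $(gn)(\alpha)=\alpha+\mu+(-\mu^{c^{-1}})^{c}=\alpha$ so that $gn\in H$ maps to $c$. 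Thus $H\twoheadrightarrow C$ and $[L:K]=|G|/|H|=|N|$, a power of $p$. The main obstacle is precisely this last structural analysis: identifying $N$ as an $R_T$-submodule of $\Lambda_Q$ to force the dichotomy, and then explicitly correcting an arbitrary lift of $c\in C$ into $H$ using the surjectivity $N\twoheadrightarrow\Lambda_Q$ and the invertible $C$-action.
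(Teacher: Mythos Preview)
Your two reduction steps are correct and essentially match the paper's argument. The gap is in the main case $L=K(\alpha)$, $\alpha^{Q}=a\in K$, $Q$ irreducible. You write $C=\Gal(K(\Lambda_{Q})/K)=(R_{T}/Q)^{\ast}$, but this is false in general: since $K\supseteq k$, restriction gives only an injection $C\hookrightarrow \Gal(k(\Lambda_{Q})/k)\cong(R_{T}/Q)^{\ast}$, and $C$ can be any subgroup (for instance $C=\{1\}$ when $\Lambda_{Q}\subseteq K$). Your conjugation computation does show that the image $V$ of $N$ in $\Lambda_{Q}$ is stable under the Carlitz action of $C$, but a $C$-stable additive subgroup of $\Lambda_{Q}$ is only an $\mathbb{F}_{p}[C]$-submodule, not an $R_{T}$-submodule, so the simplicity of $\Lambda_{Q}$ as an $R_{T}$-module does not force the dichotomy $V=0$ or $V=\Lambda_{Q}$. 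In particular $|N|$ may be any $p$-power strictly between $1$ and $q^{\deg Q}$. Without surjectivity $N\twoheadrightarrow\Lambda_{Q}$, your ``correction'' of a lift $g$ of $c\in C$ by some $n\in N$ with $n(\alpha)-\alpha=-\mu^{c^{-1}}$ is unavailable, and you cannot conclude that $H\to C$ is surjective. What survives is only $H\cap N=1$, giving $[L:K]=|N|\cdot(|C|/|H|)$ with the second factor an integer coprime to $p$; the argument stalls exactly at showing this factor is $1$.

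The paper closes this gap differently. It does not attempt the dichotomy on $V$. Instead, since $N\lhd G$ is the (normal) Sylow $p$-subgroup and $G/N\cong C$ is cyclic of order prime to $p$, Hall's theorem (Schur--Zassenhaus) produces a complement $R$ of $N$ in $G$, and moreover the $p'$-subgroup $S=\Gal(\tilde L/L)$ (which satisfies $S\cap N=1$) lies in some conjugate of $R$; replacing $R$ by that conjugate, one gets $S\le R$. Setting $E=\tilde L^{R}$, one has $K\subseteq E\subseteq L$ with $[L:E]=[R:S]$ coprime to $p$. By Proposition~\ref{prop_cog}(2) the extension $L/E$ is again radical cyclotomic; since $R$ is cyclic, for any prime $\ell\mid[R:S]$ one finds $E\subseteq E'\subseteq L$ with $[L:E']=\ell$, and Proposition~\ref{cogalois_disjunto} then gives a contradiction unless $[L:E]=1$. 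Thus $[L:K]=|N|$ is a $p$-power. Your approach could be repaired along these lines, but as written the key step relies on the incorrect identification $C=(R_{T}/Q)^{\ast}$.
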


\begin{proof}
Let $L/K$ be a finite radical cyclotomic extension, $L=K(
\alpha_1,\ldots,\alpha_t)$ for some $\alpha_i\in T(L/K)$. Then we have
\[
[L:K]=[L:K(\alpha_1,\ldots,\alpha_{t-1})]\cdots[K(\alpha_1,\alpha_2):
K(\alpha_1)][K(\alpha_1):K].
\]
Since each extension $K(\alpha_1,\ldots,\alpha_i)/K(\alpha_1,\ldots,
\alpha_{i-1})$ is finite radical cyclotomic, it suffices to handle
the case $L=K(\alpha)$.

Suppose that $L=K(\alpha)$ with $\alpha^M\in K$ for some $M\in
R_T$. Consider the factorization of $M$ in terms of irreducible polynomials
$M=P_1^{e_1}P_2^{e_2}\cdots P_s^{e_s}$. Let
$\beta_i:=\alpha^{M/P^{e_i}}$ for $1\leq i\leq s$. Then $L=
K(\beta_1,\ldots,\beta_s)$. Applying the same argument as before,
it suffices to deal with the case $L=K(\alpha)$ with $\alpha^{P^e}
\in K$ for some irreducible polynomial $P\in R_T$.

Let $L=K(\alpha)$ such that $\alpha^{P^e}\in K$ for some irreducible
polynomial $P\in R_T$. Let $\gamma_i:= \alpha^{P^{e-i}}$ for 
$1\leq i\leq e$, then $K(\gamma_1)\subseteq K(\gamma_2)\subseteq
\cdots\subseteq K(\gamma_e)=L$. So we have
\[
[L:K]=[L:K(\gamma_{e-1})]\cdots[K(\gamma_2):K(\gamma_1)][K(
\gamma_1):K].
\]
Since each extension $K(\gamma_i)/K(\gamma_{i-1})$ is finite
radical cyclotomic and $\gamma_i^P=\gamma_{i-1}$, it suffices
to consider the case $L=K(\alpha)$ with $\alpha^P\in K$ for
some irreducible polynomial $P\in R_T$.

Suppose that $\lambda_{P}\in L$. Then $L/K$ is a Galois
extension, since $L$ is the splitting field of the polynomial
$X^{P}-\alpha^P\in K[X]$ over $K$. From Corollary \ref{estructura1}, we
know that $L/K$ is a $p$-extension.

Now suppose that $\lambda_P\notin L$. We consider the diagram
\[
 \xymatrix{& & L=K(\alpha)\ar@{-}[r]^{a} & L(\lambda_{P})=K(\lambda_{P},\alpha)\\ 
     & K\ar@{-}[r]^{d} & K(\alpha)\cap K(\lambda_{P})\ar@{-}[r]^{a}\ar@{-}[u]^{b}&  K(\lambda_{P})\ar@{-}[u]^{b}\\ 
     k\ar@{-}[r]&K\cap k(\lambda_{P})\ar@{-}[r]^{d}\ar@{-}[u]^{c} & K(\alpha)\cap k(\lambda_{P})\ar@{-}[r]^{a}\ar@{-}[u]^{c}&
     k(\lambda_{P})\ar@{-}[u]^{c}}
\]

Since $K(\lambda_{P},\alpha)/K(\lambda_{P})$ is a Galois extension,
from Propositions 2.2 and 2.3 of \cite{Schultheis}, we have
$N=\Gal(L(\lambda_{P})/K(\lambda_{P}))$ can be considered a subgroup
of $\Lambda_{P}$, that is, $N$ is an elementary abelian $p$-group
and $\mid N\mid=b=p^{n}$.

Since
\[
[L:K]=[L:K(\alpha)\cap K(\lambda_{P})][K(\alpha)\cap K(\lambda_{P}):K]=bd=p^{n}d
\]
it suffices to show that $d=1$.

Let $H=\Gal(L(\lambda_{P})/(K(\alpha)\cap K(\lambda_{P})))$,
$G=\Gal(L(\lambda_{P})/K)$ and recall
$N=\Gal(L(\lambda_{P})/K(\lambda_{P}))$. Note that $N$ is a normal
subgroup of $G$.

We have
\[
G/N\cong \Gal(K(\lambda_{P})/K)<
\Gal(k(\lambda_{P})/k)\cong
C_{q^{d}-1}.
\]

Therefore $G/N$ is a cyclic group of order dividing $q^{d}-1$
and relatively prime to $p$. Furthermore, we have
$\mid G/N\mid=ad$.

From Hall's Theorem, see \cite{hall} Theorem 9.3.1, since $G$ is
a solvable group, there exists a cyclic subgroup $R$ of $G$, of order $ad$,
such that $G=NR$ (in fact, $G$ is the semidirect product
$G\cong N\rtimes R$ because $(\mid
R\mid,\mid N \mid)=1$).

Again from Hall's Theorem, any subgroup of order a divisor of $\mid
R\mid =ad$ is contained in a conjugate $R^{\prime}$ of $R$ and we
have $G=NR^{\prime}\cong N\rtimes R^{\prime}$.

Let $S=\Gal(L(\lambda_{P})/K(\alpha))\cong C_{a}$. Therefore we may
assume $S\subseteq R$ and $\mid R/S\mid=d$. Note that $(d,p)=1$.

Let $E=L(\lambda_{P})^{R}$. Observe that $L(\lambda_{P})^{S}=
K(\alpha)=L$. Therefore $K\subseteq E\subseteq L$,
$[L:E]=[R:S]=d=\mid R/S\mid$. Also, since $L/K$ is a radical
cyclotomic extension so it is $L/E$. Therefore $d=1$.
\end{proof}

\begin{corolario}\label{tdim_p_colateral}
With the notations of Theorem \ref{tdim_p} we have $K(\alpha)\cap
K(\lambda_{P})=K$ and $[L:K]=[L(\lambda_{P}):K(\lambda_{P})]$.
Furthermore
\[
{\rm Irr}(u,\alpha,K)={\rm
Irr}(u,\alpha,K(\lambda_{P}))=F_{1}(u)=\prod(u-(\alpha+\lambda^{A}_{P})).
\]
\end{corolario}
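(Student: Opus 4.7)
The plan is to read all three claims off directly from the analysis of the case $L = K(\alpha)$ with $\alpha^{P}\in K$ that was carried out in the proof of Theorem \ref{tdim_p}. That proof established, via the Hall-theoretic decomposition $G = \Gal(L(\lambda_{P})/K)\cong N\rtimes R$ followed by the identification $E = L(\lambda_{P})^{R}$, that $d = [K(\alpha)\cap K(\lambda_{P}):K] = 1$. This is exactly the first assertion $K(\alpha)\cap K(\lambda_{P}) = K$, so I would begin by quoting this conclusion explicitly.

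For the degree equality, I would invoke the subfield diagram drawn in the proof of Theorem \ref{tdim_p}: in that diagram the vertical edge $K(\lambda_{P})\to L(\lambda_{P})$ has degree $b$, the vertical edge $K(\alpha)\cap K(\lambda_{P})\to L$ also has degree $b$, and $[L:K] = bd$. With $d = 1$ one immediately concludes $[L:K] = [L(\lambda_{P}):K(\lambda_{P})] = b = p^{n}$. Equivalently, this says that $L$ and $K(\lambda_{P})$ are linearly disjoint over $K$, so that the natural restriction $\Gal(L(\lambda_{P})/K(\lambda_{P}))\to \Gal(L/K)$ is an isomorphism.

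For the polynomial identity, the natural chain is ${\rm Irr}(u,\alpha,K(\lambda_{P}))\mid {\rm Irr}(u,\alpha,K)$ in $K(\lambda_{P})[u]$ (since $K\subseteq K(\lambda_{P})$), and both polynomials are monic of the same degree $p^{n}$ by the previous step; hence they coincide. To exhibit this common polynomial as $F_{1}(u)$, I would use that $\alpha^{P}\in K$ forces ${\rm Irr}(u,\alpha,K)$ to divide the separable polynomial $u^{P}-\alpha^{P}\in K[u]$, whose roots in $\overline{k}$ are precisely the elements $\alpha + \lambda_{P}^{A}$ as $A$ ranges over residues modulo $P$. Therefore ${\rm Irr}(u,\alpha,K)$ must be the product of the linear factors $(u-(\alpha+\lambda_{P}^{A}))$ corresponding to the Galois orbit of $\alpha$ over $K$, which is the asserted formula for $F_{1}(u)$.

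The entire corollary is essentially a repackaging of intermediate facts already established in the proof of Theorem \ref{tdim_p}, so I do not anticipate a genuine obstacle; the only mild point requiring attention is to verify that the orbit indexing set is the same over $K$ as over $K(\lambda_{P})$, and this is forced by the degree count from the first part of the corollary.
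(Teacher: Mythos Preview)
Your proposal is correct and matches the paper's approach exactly: the paper's proof consists of the single sentence ``It follows from the proof of Theorem \ref{tdim_p},'' and what you have written is precisely the unpacking of that sentence, reading off $d=1$, the degree equality $[L:K]=b=[L(\lambda_P):K(\lambda_P)]$ from the diagram, and the factorisation of the minimal polynomial from $\alpha^{P}\in K$ together with the degree count. One small caution: your parenthetical remark that restriction gives an isomorphism $\Gal(L(\lambda_P)/K(\lambda_P))\to\Gal(L/K)$ presumes $L/K$ is Galois, which is not asserted in this case; the linear disjointness and degree equality are what you actually need and have already established, so you can safely drop that aside.
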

\begin{proof}
It follows from the proof of Theorem \ref{tdim_p}.
\end{proof}

\begin{corolario}\label{cogalois_grado_p_pureza}
A finite extension $L/K$ is radical cyclotomic if and only if it is
separable, radical and $[L:K]=p^{m}$ for some $m\in {\mathbb{N}}$.
\end{corolario}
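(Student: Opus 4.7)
The plan is to observe that the corollary is a compact repackaging of two results already in the paper, so I would split the proof into its two implications and invoke exactly one previous result for each direction.

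For the forward direction, assume $L/K$ is a finite radical cyclotomic extension. By Definition \ref{definicion1_c2} the extension is, by definition, radical and separable. The remaining claim, that $[L:K] = p^m$ for some $m \geq 0$, is precisely the conclusion of Theorem \ref{tdim_p}. So this direction is immediate and does not require any new argument.

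For the converse, assume $L/K$ is finite, separable, radical, and $[L:K] = p^m$. To prove it is radical cyclotomic, I only need to verify the third item of Definition \ref{definicion1_c2}, namely purity, since the other two hypotheses are already in my assumption. But purity for a finite extension of degree $p^s$ over a field of characteristic $p$ is exactly the statement of Lemma \ref{pureza_p}. Thus $L/K$ is pure, and hence radical cyclotomic.

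There is no real obstacle here; the substantive content of this corollary was already extracted in Theorem \ref{tdim_p} (for the nontrivial ``only if'') and Lemma \ref{pureza_p} (for the ``if''). The corollary simply records that, in the presence of separability, radicality, and a degree that is a power of the characteristic, the purity condition is automatic, and consequently the characterization of radical cyclotomic extensions becomes clean and removes the need to hypothesize the Galois property used in Theorem \ref{tdim_p_galois}.
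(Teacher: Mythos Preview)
Your proposal is correct and matches the paper's own proof, which simply states ``It follows from Theorem \ref{tdim_p} and Lemma \ref{pureza_p}.'' You have merely unpacked this into the two implications and identified which result handles each direction, which is exactly the intended reading.
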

\begin{proof}
It follows from Theorem \ref{tdim_p} and Lemma \ref{pureza_p}.
\end{proof}

\section{Examples and applications}\label{examples}

In this section we will present some applications of the above
results. First, we have the following consequence of Theorem
\ref{finitud_TC/L}.

\begin{proposicion}\label{finitud_redes}
If $E/L$ is a finite Galois field extension, with Galois group
$\Gamma$, then the function:
\[
\phi:\{H\mid L\leq H \leq T(E/L)\}\rightarrow \{U\mid U\leq Z^{1}(\Gamma,\mu(E))\},
\]
given by $\phi(H) = \{f_{\alpha}\in
Z^{1}(\Gamma,\mu(E))\mid \alpha\in H\}$, is a lattice isomorphism.
\end{proposicion}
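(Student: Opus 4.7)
The plan is to deduce the proposition from Theorem~\ref{finitud_TC/L} together with the standard submodule correspondence theorem. Denote by $\Phi:\cogalois(E/L)\to Z^{1}(\Gamma,\mu(E))$ the group isomorphism provided by that theorem, sending $u+L\mapsto f_{u}$, and let $\pi:T(E/L)\to T(E/L)/L=\cogalois(E/L)$ be the canonical projection.

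First I would verify that $\phi$ is well defined with values in the lattice of subgroups. For any $H$ with $L\leq H\leq T(E/L)$,
\[
\phi(H)=\{f_{\alpha}\mid \alpha\in H\}=\Phi(\pi(H))=\Phi(H/L).
\]
Since $L$ is an $R_{T}$-submodule of $H$, the quotient $H/L$ is a subgroup of $\cogalois(E/L)$, and because $\Phi$ is a group isomorphism, $\phi(H)=\Phi(H/L)$ is a subgroup of $Z^{1}(\Gamma,\mu(E))$.

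Next I would exhibit the inverse. Given a subgroup $U\leq Z^{1}(\Gamma,\mu(E))$, set $\psi(U):=\pi^{-1}(\Phi^{-1}(U))$. Clearly $L\subseteq \psi(U)\subseteq T(E/L)$. By the module correspondence theorem, $H\mapsto H/L$ is an inclusion-preserving bijection between submodules of $T(E/L)$ containing $L$ and submodules of $\cogalois(E/L)$, with inverse $V\mapsto \pi^{-1}(V)$. Composing this with the lattice isomorphism on subgroups induced by $\Phi$ shows that $\phi$ and $\psi$ are mutually inverse. Because each of $\pi$, $\Phi$, and preimages under a homomorphism preserves inclusions in both directions, $\phi$ is an order-isomorphism, hence a lattice isomorphism (with $\phi(H_{1}\cap H_{2})=\phi(H_{1})\cap\phi(H_{2})$ and $\phi(H_{1}+H_{2})=\phi(H_{1})+\phi(H_{2})$ following formally from bijectivity and monotonicity).

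The only bookkeeping point requiring a little care is that the image of $\phi$ is \emph{all} subgroups of $Z^{1}(\Gamma,\mu(E))$, not merely a proper subcollection; this is immediate once one records that $\Phi$, being a group isomorphism, carries subgroups bijectively onto subgroups. I do not anticipate any real obstacle beyond this routine verification, since the analytic content of the statement is already encapsulated in Theorem~\ref{finitud_TC/L}.
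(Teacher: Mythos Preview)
Your proposal is correct and follows exactly the paper's approach: the paper's entire proof is the single sentence ``It follows from the isomorphism given in Theorem~\ref{finitud_TC/L},'' and you have simply spelled out the routine correspondence-theorem argument that this sentence implicitly invokes.
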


\begin{proof}
It follows from the isomorphism given in Theorem \ref{finitud_TC/L}.
\end{proof}

Now, let $E/L$ be a Galois extension with Galois group $\Gamma$. We
define
\[
f:\Gal(E/L)\times \cogalois(E/L)\rightarrow \mu(E)
\]
by $f(\sigma,\overline{u}) = \sigma(u) - u$. Since
$\cogalois(E/L) \rightarrow Z^{1} (\Gamma, \mu(E)) $ is an
isomorphism, we have the evaluation function
\[
<,>:\Gamma\times Z^{1}(\Gamma, \mu(E))\rightarrow \mu(E)
\]
given by $<\sigma,h> = h(\sigma)$.

For each $\Delta \leq \Gamma $, $U \leq Z^{1} (\Gamma, \mu (E)) $
and each $\chi \in Z^{1} (\Gamma, \mu (E)) $ we define:
\begin{gather*}
\Delta^{\bot} = \{h\in Z^{1}(\Gamma, \mu(E))\mid <\sigma,h> = 0 
\text{ for every $\sigma\in \Delta$}\},\\
U^{\bot} = \{\sigma\in \Gamma\mid <\sigma,h> = 0 \text{ for every $h\in U$}\},\\
\chi^{\bot} = \{\sigma\in \Gamma\mid <\sigma,\chi> = 0\}.
\end{gather*}
Thus $\Delta^{\bot}\leq Z^{1}(\Gamma,\mu(E))$ and
$U^{\bot}\leq \Gamma$.

\begin{proposicion}\label{redes_casineat}
Let $E/L$ be a finite Galois extension with Galois group $\Gamma$.
Let $L^{\prime} /L$ be a subextension of $E/L$. Then $L^{\prime}/L$
is radical if and only if there exists a subgroup $U\leq
Z^{1}(\Gamma,\mu(E))$ such that $\Gal(E/L^{\prime}) = U^{\bot}$.
\end{proposicion}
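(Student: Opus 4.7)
The plan is to combine the lattice isomorphism of Proposition \ref{finitud_redes} with the standard Galois correspondence for the extension $E/L$. The key bridge is the observation that, via the formula $f_{\alpha}(\sigma)=\sigma(\alpha)-\alpha$, vanishing of a crossed homomorphism on $\sigma$ is precisely the condition that $\sigma$ fixes the corresponding torsion element. So ``being perpendicular to $U$'' translates into ``fixing the submodule $\phi^{-1}(U)$ pointwise,'' which by Galois theory is the same as fixing the field it generates over $L$.

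For the forward implication, suppose $L'/L$ is radical, so $L'=L(A)$ for some $A\subseteq T(L'/L)\subseteq T(E/L)$. Let $H$ be the $R_{T}$-submodule of $T(E/L)$ generated by $L\cup A$; then $L\leq H\leq T(E/L)$ and $L(H)=L'$. Set $U=\phi(H)$, where $\phi$ is the lattice isomorphism of Proposition \ref{finitud_redes}. I then check the chain of equivalences
\[
\sigma\in U^{\bot}\;\Longleftrightarrow\;f_{\alpha}(\sigma)=0\text{ for all }\alpha\in H\;\Longleftrightarrow\;\sigma(\alpha)=\alpha\text{ for all }\alpha\in H\;\Longleftrightarrow\;\sigma\in\Gal(E/L(H)),
\]
the last equivalence because $\sigma$ already fixes $L$ and $L(H)=L'$. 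Hence $U^{\bot}=\Gal(E/L')$.

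For the converse, assume there is $U\leq Z^{1}(\Gamma,\mu(E))$ with $\Gal(E/L')=U^{\bot}$. By Proposition \ref{finitud_redes} there is a unique $H$ with $L\leq H\leq T(E/L)$ and $\phi(H)=U$. Running the same chain of equivalences shows $U^{\bot}=\Gal(E/L(H))$, so $\Gal(E/L')=\Gal(E/L(H))$; by the Galois correspondence for $E/L$, $L'=L(H)$. Since $H\subseteq T(E/L)\cap L'=T(L'/L)$, this exhibits $L'$ as generated over $L$ by torsion elements, so $L'/L$ is radical.

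The main thing to be careful about is the bijection between submodules of $T(E/L)$ containing $L$ and the lattice side: once Proposition \ref{finitud_redes} is in hand, the rest is essentially bookkeeping. The only genuine point to verify is the equivalence ``$\sigma$ fixes $H$ pointwise $\Longleftrightarrow\sigma\in\Gal(E/L(H))$,'' which uses that $\sigma\in\Gamma$ automatically fixes $L$ and that field automorphisms fixing a generating set of $L(H)/L$ fix all of $L(H)$. No obstacle beyond setting up the submodule $H$ correctly in each direction.
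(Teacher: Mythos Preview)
Your proof is correct and follows essentially the same approach as the paper: both directions hinge on the lattice isomorphism of Proposition~\ref{finitud_redes}, taking $H=\phi^{-1}(U)$ (respectively $U=\phi(H)$), and verifying the chain of equivalences $\sigma\in U^{\bot}\Leftrightarrow f_{\alpha}(\sigma)=0$ for all $\alpha\in H\Leftrightarrow \sigma\in\Gal(E/L(H))$, followed by the Galois correspondence. The only cosmetic difference is that the paper takes the additive subgroup generated by the radical generators and $L$, whereas you take the $R_{T}$-submodule; either works.
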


\begin{proof}
If $L^{\prime}/L$ is a radical extension there exists
$\widetilde{G}\subseteq T(E/L)$ such that $L^{\prime} =
L(\widetilde{G})$. We may replace $\widetilde{G}$ by the {\it
additive} subgroup generated by $\widetilde{G}$ and $L$ which we
denote by $G$. Thus $L\leq G\leq T(E/L)$ and $L^{\prime} = L(G)$.
Let
\[
U = \phi(G)=\{f_{\alpha}\mid \alpha\in G\}\leq Z^{1}(\Gamma,\mu(E))
\]
where $\phi$ is the function given in Proposition
\ref{finitud_redes}. Then
\begin{align*}
U^{\bot} &= \{\sigma\in \Gamma\mid <\sigma,f_{\alpha}> = 0 \text{ for each $f_{\alpha}\in U$}\}\\
&=\{\sigma\in \Gamma\mid f_{\alpha}(\sigma) = 0\text{ for each $f_{\alpha}\in U$}\}\\
&=\{\sigma\in \Gamma\mid  \sigma(\alpha)=\alpha \text{ for each $f_{\alpha}\in U$}\}\\
&=\{\sigma\in \Gamma\mid \sigma(x) = x \text{ for each $x\in L(G)$}\}\\
&=\Gal(E/L(G))) = \Gal(E/L^{\prime}).
\end{align*}

Conversely, if there exists a subgroup $U \leq Z^{1} (\Gamma, \mu(E))
$ such that $\Gal(E/L^{\prime}) = {U^\bot}$ then we will see that
$\Gal(E/L^{\prime}) = U^{\bot} = \Gal(E/L(G)))$, with $G=\{\alpha\in
T(E/L)\mid f_{\alpha}\in U\} = \phi^{-1}(U)$ where $\phi$ is the function
of Proposition \ref{finitud_redes}.

To prove the above equalities we only have to show that $U^{\bot} =
\Gal(E/L(G))$. For this purpose we consider $\tau\in U^{\bot} =
\{\sigma\in\Gamma\mid h(\sigma)=0\, \forall\, h\in U\}$. If
$\alpha\in G$ then $f_{\alpha}\in U$. In particular,
$f_{\alpha}(\tau) = 0 = \tau(\alpha) - \alpha$. Therefore for each
$\alpha\in G$, we obtain $\tau(\alpha) = \alpha$ and thus $\tau$
fixes $L(G)$. Thus $\tau\in\Gal(E/L(G))$. Now let $\tau\in
\Gal(E/L(G))$ and $h\in U$. By the definition of $G$ there exists
$\alpha \in G$ such that $h = f_{\alpha}$. Therefore $\phi$ is
bijective. It follows that $h(\tau) = f_{\alpha}(\tau) = 0$. Hence
$\tau \in U^{\bot}$. From Galois theory, it follows that $L^{\prime}
= L(G)$.
\end{proof}

The following result is an application of Proposition
\ref{redes_casineat}, see \cite{barrera2}. The symbol
$\sqrt[N]{\alpha} $ denotes a root of the polynomial $u^{N} -
\alpha$.
\begin{proposicion}\label{redes2}
Let $K/F$ be a finite separable extension and let $E$ be the normal
closure of $K/F$. Suppose that there is a finite extension $L/F$
such that

{\rm (1)} $E(\lambda_{N})\cap L = F$ where $N\in R_{T}$ is a nonconstant
polynomial.

{\rm (2)} $KL = L(\sqrt[N]{\alpha})$ for some nonzero $\alpha\in L$.

Then $K=F(\sqrt[N]{\alpha})$.
\end{proposicion}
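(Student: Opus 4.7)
The plan is to descend the radical presentation $KL=L(\beta)$ (where $\beta:=\sqrt[N]{\alpha}$) from $L$ down to $F$ via the crossed-homomorphism correspondence of Theorem \ref{finitud_TC/L}.

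First I set up the Galois picture. The field $E(\lambda_{N})$ is Galois over $F$ (as the compositum of the normal closure $E/F$ with the abelian extension $k(\lambda_{N})/k$), and by hypothesis (1) it meets $L$ only in $F$. Hence $E(\lambda_{N})L/L$ is Galois and restriction yields an isomorphism
\[
\Gamma := \Gal(E(\lambda_{N})L/L) \xrightarrow{\sim} \Gal(E(\lambda_{N})/F).
\]
Because $K\subseteq E\subseteq E(\lambda_{N})$, the chosen root $\beta\in KL$ already lies in $E(\lambda_{N})L$. For each $\sigma\in\Gamma$, $\sigma$ fixes $\alpha\in L$, so $\sigma(\beta)^{N}=\sigma(\alpha)=\alpha=\beta^{N}$, and hence $\sigma(\beta)-\beta\in\Lambda_{N}$. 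Thus $f_{\beta}:\sigma\mapsto\sigma(\beta)-\beta$ is a crossed homomorphism with values in $\Lambda_{N}\subseteq\mu(E(\lambda_{N}))$.

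Next I descend $\beta$. Transport $f_{\beta}$ across the isomorphism to view it as an element of $Z^{1}(\Gal(E(\lambda_{N})/F),\mu(E(\lambda_{N})))$. Applying Theorem \ref{finitud_TC/L} to $E(\lambda_{N})/F$ produces $\beta'\in T(E(\lambda_{N})/F)$ with $f_{\beta'}=f_{\beta}$, so $\sigma(\beta-\beta')=\beta-\beta'$ for every $\sigma\in\Gamma$ and therefore $\delta:=\beta-\beta'\in L$. Since $\sigma(\beta')=\beta'+f_{\beta}(\sigma)$ with $f_{\beta}(\sigma)\in\Lambda_{N}$, additivity of the Carlitz action gives $\sigma(\beta'^{N})=\beta'^{N}+f_{\beta}(\sigma)^{N}=\beta'^{N}$, so $\beta'^{N}\in F$.

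Finally I show $K=F(\beta')$. Because $\delta\in L$, $L(\beta')=L(\beta)=KL$. Linear disjointness of $E(\lambda_{N})$ and $L$ over $F$ gives both $K\cap L=F$ (since $K\subseteq E(\lambda_{N})$) and $F(\beta')\cap L\subseteq E(\lambda_{N})\cap L=F$, whence
\[
[F(\beta'):F]=[F(\beta')L:L]=[KL:L]=[K:F].
\]
Under the isomorphism $\Gamma\cong\Gal(E(\lambda_{N})/F)$, the fixed field in $E(\lambda_{N})$ of the subgroup corresponding to $KL$ equals $KL\cap E(\lambda_{N})=K$; hence $\beta'\in K$, which forces $F(\beta')\subseteq K$, and combined with the degree equality gives $F(\beta')=K$. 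The main obstacle will be matching this $\beta'$ with the specific $\sqrt[N]{\alpha}$ in the statement: the $N$-th roots of $\alpha$ form the coset $\beta+\Lambda_{N}\subseteq KL$, and one must check that after a suitable choice of root, $\beta'$ sits in that coset (equivalently, that $\alpha\in F$ and the corresponding root of $u^{N}-\alpha$ lies in $K$). The heart of the proof is the descent itself, which passes the crossed homomorphism from $\Gamma$ to $\Gal(E(\lambda_{N})/F)$ via restriction and then inverts it through the additive Hilbert~90 implicit in Theorem \ref{finitud_TC/L}.
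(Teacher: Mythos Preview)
Your argument is correct and follows essentially the same route as the paper: form the crossed homomorphism $\chi=f_{\beta}$ on $\Gal(E(\lambda_{N})L/L)$, transport it across the restriction isomorphism to $\Gal(E(\lambda_{N})/F)$, and then invoke the additive Hilbert~90 machinery. The paper packages the last step as an appeal to Proposition~\ref{redes_casineat} (so that $\Gal(E(\lambda_{N})/K)=\ker\chi=U^{\bot}$ forces $K/F$ to be radical), whereas you unwind that proposition and apply Theorem~\ref{finitud_TC/L} directly to produce the explicit generator $\beta'$; the Galois-correspondence identification $KL\cap E(\lambda_{N})=K$ that you use is exactly what underlies the paper's $\ker\chi=\Gal(E(\lambda_{N})/K)$.

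Your version is in fact more complete than the paper's, which stops at ``$K/F$ is a radical extension'' and does not verify the literal equality $K=F(\sqrt[N]{\alpha})$. Your observation about the obstacle is well taken: what the argument produces is $\beta'\in K$ with $\beta'^{N}=\alpha-\delta^{N}\in F$ for some $\delta\in L$, so $K=F(\beta')$ is generated by an $N$-th Carlitz root over $F$, but not necessarily by a root of $u^{N}-\alpha$ for the \emph{given} $\alpha\in L$. The paper's statement should be read in that looser sense (as its own proof confirms), and your caveat is the honest way to phrase the conclusion.
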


\begin{proof}
Consider the following diagram
\[
 \xymatrix{E(\lambda_{N})\ar@{-}[d]\ar@{-}[r]& E(\lambda_{N})L\ar@{-}[d]\\
 K\ar@{-}[d]\ar@{-}[r]  & KL \ar@{-}[d]\\
F \ar@{-}[r] & L}
\]

Since the extension $E(\lambda_{N})/F$ is a Galois extension, we
have that $E(\lambda_{N})L/L $ is a Galois extension and from (1) we
obtain
\[
G=\Gal(E(\lambda_{N})/F)\cong \Gal(E(\lambda_{N})L/L)=G_{1}.
\]

From (2) we have $KL = L(\sqrt[N]{\alpha}) $. Let $\beta =
\sqrt[N]{\alpha}$ and $\sigma\in \Gal(E(\lambda_{N})L/L)$.
Then
\begin{equation}\label{ec4}
  (\sigma(\beta)- \beta)^{N} = \sigma(\beta^{N})-\beta^{N} = \sigma(\alpha)-\alpha = 0.
\end{equation}

We define $\chi: G_{1} \rightarrow \mu(E (\lambda_{N}) L) $ by
$\chi(\sigma) = \sigma(\beta) - \beta$.

If $\sigma \in \Gal(E(\lambda_{N})L/KL) $ we have $\chi(\sigma) =
\sigma(\beta) - \beta = 0$ and vice versa. Thus
$\Gal(E(\lambda_{N})L/KL) = \ker(\chi)$. Further, from (\ref{ec4})
we have that the image of $\chi$ is contained in $\Lambda_{N}$.
Since $G$ and $G_{1}$ are isomorphic, $\chi$ can be defined on $G$.

Therefore $\chi$ can be considered as an element of $Z^{1}(G,
E(\lambda_ {N})) $ and $\ker(\chi) $ as equal to
$\Gal(E(\lambda_{N})/K)$. From Proposition \ref{redes_casineat}
$K/F$ is a radical extension.
\end{proof}

Let $E/F$ be a finite Galois extension with Galois group $G$. Let
$L/F$ be another extension such that $L\cap E = F$. Consider the
composition $EL$. The restriction
\[
\Gal(EL/L)\rightarrow \Gal(E/F),\,\,\, \sigma\mapsto \sigma\mid_{E}
\]
is an isomorphism of groups. Denote by $S(L_{1}/L_{2})$
the set of extensions of $L_{2}$ contained in $L_{1}$. Then the
functions
\begin{gather*}
\varepsilon:S(E/F)\rightarrow S(EL/L),\,\,\, K^{\prime}/F\mapsto LK^{\prime}/L\\
\intertext{and}
\lambda:S(EL/L)\rightarrow S(E/F),\,\,\, K_{1}/L\mapsto (K_{1}\cap E)/F
\end{gather*}
are lattice isomorphisms, inverses of each other.

Denote by $ST(E/F)$ the set of all radical sub-extensions
$K^{\prime}/F$ of $E/F$. Then for $K^{\prime}/F\in ST(E/F)$ there
exists an $R_{T}$-module $G$, which is not necessarily unique, such
that $F\subseteq G\subseteq T(E/F)$ and $K^{\prime} = F(G)$. Let
$G_{1} = G + L$. Since $LK^{\prime} = L(G)$, $L\subseteq
G_{1}\subseteq T(EL/L)$ and $G_{1}$ is an $R_{T}$-module, we have
$LK^{\prime}= L(G_{1})$. Therefore $\varepsilon(K^{\prime}/L)\in
ST(EL/L)$. Thus the restriction of $\varepsilon$ to radical extensions
defines an injective function
\begin{gather*}
\rho:ST(E/F)\rightarrow ST(EL/L)\\
\intertext{given by}
F(G)/F\mapsto F(G)L/L = L(G + L)/L
\end{gather*}
where $G$ is an $R_{T}$-module such that $F\subseteq G
\subseteq T(E/F)$.

\begin{proposicion}\label{redes_torsion_galois}
Let $E/F$ be a finite Galois extension with Galois group $\Gamma$,
and let $L/F$ be an arbitrary extension, with $L\subseteq
\overline{k}$ and such that $E\cap L = F$. If $\mu(EL) = \mu(E)$,
then:
\begin{description}
  \item[(1)] $(G + L) \cap E = G$ for every $R_{T}$-module $G$ with
  $F\subseteq G\subseteq T(E/F)$
  \item[(2)] $G_{1} = (G_{1}\cap E) + L$ for every $R_{T}$-module
  $G_{1}$ with $L\subseteq G_{1} \subseteq T(EL/L)$.
  \item[(3)] The function
\begin{gather*}
\rho:ST(E/F)\rightarrow ST(EL/L)\\
F(G)/F\mapsto L(G +L)/L, \,\,\, F\leq G\leq T(E/F)\\
\intertext{is bijective, and the function}
ST(EL/L)\rightarrow ST(E/F),\\
L(G_{1})/L\mapsto F(G_{1}\cap E)/F,\quad L\leq G_{1} \leq T(EL/L)
\end{gather*}
is the inverse of $\rho$.
\end{description}

Here, the notation $F \leq G$ means that $F$ is a
submodule of the $R_{T}$-module $G$.
\end{proposicion}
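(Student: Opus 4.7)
The plan is to prove parts (1), (2), (3) in that order, since (1) and (2) feed into (3). The key identification is that, because $E/F$ is Galois and $E\cap L=F$, restriction gives an isomorphism $\Gal(EL/L)\cong\Gal(E/F)=\Gamma$. Part (1) is a direct set-theoretic argument: the inclusion $G\subseteq(G+L)\cap E$ is trivial, and conversely for $x\in(G+L)\cap E$ written as $x=g+\ell$ with $g\in G\subseteq E$ and $\ell\in L$, one has $\ell=x-g\in E\cap L=F\subseteq G$, so $x\in G$.

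Part (2) is the technical heart and is where the hypothesis $\mu(EL)=\mu(E)$ enters. The inclusion $(G_{1}\cap E)+L\subseteq G_{1}$ is clear from $L\subseteq G_{1}$. For the reverse, take $x\in G_{1}$ and pick $M\in R_{T}$ with $x^{M}\in L$. For every $\sigma\in\Gal(EL/L)$ the computation $(\sigma(x)-x)^{M}=\sigma(x^{M})-x^{M}=0$ shows $\sigma(x)-x\in\mu(EL)=\mu(E)\subseteq E$. Viewing $\sigma\in\Gal(EL/L)$ as an element of $\Gamma$ via restriction, the map $\sigma\mapsto\sigma(x)-x$ is a crossed homomorphism $\Gamma\to E$; additive Hilbert 90 for the Galois extension $E/F$ then yields $y\in E$ with $\sigma(y)-y=\sigma(x)-x$ for all $\sigma$. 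Consequently $x-y$ is fixed by every $\sigma\in\Gal(EL/L)$, so $x-y\in L$, and $y=x-(x-y)\in G_{1}$ since $L\subseteq G_{1}$; hence $y\in G_{1}\cap E$ and $x=y+(x-y)\in(G_{1}\cap E)+L$.

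For (3), I would first check that the proposed inverse is well-defined: $F=E\cap L\subseteq G_{1}\cap E$, and if $u\in G_{1}\cap E$ satisfies $u^{N}\in L$ then $u^{N}\in E\cap L=F$, so $G_{1}\cap E\subseteq T(E/F)$. Parts (1) and (2) then give that the two compositions are the identity at the field level: $F((G+L)\cap E)=F(G)$ by (1), and $L((G_{1}\cap E)+L)=L(G_{1})$ by (2). The remaining identity $L(G_{1})\cap E=F(G_{1}\cap E)$ follows from the Galois correspondence between subfields of $EL$ containing $L$ and subfields of $E$ containing $F$, given by compositum with $L$ and intersection with $E$, combined with $L\cdot F(G_{1}\cap E)=L(G_{1}\cap E)=L(G_{1})$ coming from (2).

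The main obstacle is part (2). The entire argument depends on the crossed homomorphism $\sigma\mapsto\sigma(x)-x$ taking values inside $E$, which is exactly the content of the hypothesis $\mu(EL)=\mu(E)$, and on invoking additive Hilbert 90 to produce an $E$-valued coboundary. Without $\mu(EL)=\mu(E)$ the map would only be known to land in $\mu(EL)$, and no $E$-valued coboundary could be extracted, so the decomposition $x=y+(x-y)$ would not be forthcoming.
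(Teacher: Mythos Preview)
Your proposal is correct and matches the paper's proof in structure and method: part (1) is the same set-theoretic argument, and part (2) is the same crossed-homomorphism reduction, using $\mu(EL)=\mu(E)$ to land the cocycle inside $E$ and then additive Hilbert~90 to trivialize it. The paper packages the step in part (2) through Theorem~\ref{finitud_TC/L} (the isomorphism $\cogalois(E/F)\cong Z^{1}(\Gamma,\mu(E))$) and for part (3) argues surjectivity of $\rho$ rather than verifying both compositions, but these are cosmetic differences.
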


\begin{proof}
(1) Let $w\in (G + L)\cap E$. Thus $w = x + y$ where $x\in G$ and
$y\in L$. Therefore $y = w - x\in E$. Hence $y\in F$ since $E\cap L
= F$. Therefore $w\in G$. Conversely if $x \in G$ certainly $x \in
(G + L)\cap E$.

(2)Denote by $\Gamma_{1}$ the Galois group of $EL/L$. We have seen
that there exists a group isomorphism
\begin{equation}\label{ec3}
\theta:\Gamma_{1}\rightarrow \Gamma,\,\,\, \sigma_{1}\rightarrow
\sigma_{1}\!\!\mid_{E}.
\end{equation}

Since $\mu(EL) = \mu(E)$, $\theta$ induces a group isomorphism
\[
\upsilon:Z^{1}(\Gamma,\mu(E))\rightarrow Z^{1}(\Gamma_{1},\mu(EL))
\]
given as follows: let $h\in Z^{1}(\Gamma,\mu(E))$. If
$\sigma_{1}\in \Gamma_{1}$ then $\sigma_{1}\!\!\mid_{E}\in \Gamma$.
Define $\upsilon(h)(\sigma_{1}) := h(\sigma_{1}\mid_{E})$. We have
\[
\upsilon(h)(\sigma_{1}\circ
\sigma_{2}) = h(\sigma_{1}\circ \sigma_{2}\mid_{E}) =
h(\sigma_{1}\mid_{E}\circ \sigma_{2}\mid_{E}).
\]

Thus $\upsilon(h)$ is a crossed homomorphism. By construction
$\upsilon$ is a group homomorphism, and by (\ref{ec3}) we have
$\upsilon$ is a group isomorphism.

Let $G_{1}$ be such that $L\leq G_{1} \leq T(EL/L)$. If $w\in
(G_{1}\cap E) + L$ then $w = x + y$ with $x\in (G_{1}\cap E)$ and
$y\in L$. Thus $w\in G_{1}$. Now let $a_{1}\in G_{1}$. Then
$f_{a_{1}}\in Z^{1}(\Gamma_{1},\mu(EL))$. There exists $f \in Z^{1}
(\Gamma, \mu(E)) $ such that $f_{a_{1}} = \upsilon(f)$. From
Proposition \ref{finitud_TC/L}, we have that there exists $a \in
T(E/F) $ such that $f_{a_{1}} = \upsilon(f = f_{a})$.

We have $f_{a_{1}}(\sigma_{1}) = f_{a}(\sigma_{1}\mid_{E})$ for all
$\sigma_{1}\in \Gamma_{1}$. It follows that $\sigma_{1} (a_{1}) -
a_{1} = \sigma_{1} (a) - a$, that is, $\sigma_{1}(a_{1} - a) = a_{1}
- a$ for each $\sigma_{1}\in \Gamma_{1}$. Thus $a_{1} - a\in L$.
Therefore $a_{1} = a + b$ where $b\in L$. Since $a\in G_{1}$ it
follows that $a\in (G_{1}\cap E) + L$.

(3) From the remark before this proposition we have that $\rho$ is
injective, so it suffices to show that $\rho$ is surjective. Let
$K_{1}/L\in ST(EL/L)$. Then $K_{1} = L(G_{1})$ for some $G_{1}$ with
$L\leq G_{1}\leq T(EL/L)$. Therefore if we put $G = G_{1}\cap E$, we
obtain that $F(G)/F\in ST(E/F)$ and that
\[
\rho(F(G)/F) = L(F(G))/L = L(F(G_{1}\cap E))/L = L(L + (G_{1} \cap E))/L.
\]
From (2), we have $L(L + (G_{1} \cap E))= L(G_{1}) =
K_{1}$.
\end{proof}

The following lemma shows that the converse of Theorem \ref{tdim_p}
is not always valid.
\begin{lema}\label{auxiliar_contra_cogalois}
Let $L/K$ be a Galois extension such that $[L:K] = p^{2} $,
$\mu(L)=\mu(K)$ and $G = \Gal(L/K)\cong C_{p^{2}}$. Then $L/K$ is
not a radical extension.
\end{lema}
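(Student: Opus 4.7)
The plan is to argue by contradiction: assume $L/K$ is radical and show that a generator of $G \cong C_{p^2}$ must have order dividing $p$, which is absurd.

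First, suppose $L = K(A)$ for some $A \subseteq T(L/K)$; enlarging $A$ if necessary I may take $A$ to be the $R_T$-submodule of $L$ generated by $A$ together with $K$, so that $K \subseteq A$ and $L = K(A)$. Since $L/K$ is finite, $A/K$ is a bounded $R_T$-torsion module and Proposition \ref{galois_torsion_raices} applies: because $L/K$ is Galois, $\lambda_M \in L$ for every $M \in \mathcal{O}_{A/K}$. Now invoke the hypothesis $\mu(L) = \mu(K)$: since each such $\lambda_M$ lies in $\mu(L)$, in fact $\lambda_M \in K$ for every $M \in \mathcal{O}_{A/K}$, and consequently $\Lambda_M \subseteq K$ for all these $M$.

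Next I would exploit this to show every element of $A$ is nearly $G$-fixed. Pick $u \in A$ of order $M \in \mathcal{O}_{A/K}$, so $u^M = a \in K$. For any $\sigma \in G$, since the Carlitz action commutes with $\sigma$ and $\sigma(a) = a$,
\[
(\sigma(u) - u)^{M} = \sigma(u^M) - u^M = 0,
\]
so $\lambda_\sigma := \sigma(u) - u \in \Lambda_M \subseteq K$. Because $\lambda_\sigma$ lies in $K$ it is fixed by all of $G$, and a straightforward induction gives
\[
\sigma^{i}(u) = u + i\,\lambda_\sigma \qquad \text{for all } i \geq 1.
\]

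Now fix $\sigma$ a generator of $G \cong C_{p^{2}}$. In characteristic $p$, $p\,\lambda_\sigma = 0$ (ordinary additive multiple in the field $K$), hence $\sigma^{p}(u) = u$ for every $u \in A$. Since $L = K(A)$ and $\sigma^{p}$ fixes $K$ and each element of $A$, $\sigma^{p}$ fixes $L$, i.e.\ $\sigma^{p} = 1$ in $G$. This contradicts the assumption that $\sigma$ has order $p^{2}$, so $L/K$ cannot be radical. The main conceptual ingredient is the combination of Proposition \ref{galois_torsion_raices} with $\mu(L)=\mu(K)$ to force $\lambda_\sigma \in K$; the rest is the characteristic-$p$ collapse $p\lambda_\sigma = 0$, which caps the order of any $\sigma$ acting on $A$ at $p$.
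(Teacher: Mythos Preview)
Your argument is correct and takes a genuinely different route from the paper's. The paper proceeds cohomologically: using Theorem~\ref{finitud_TC/L} together with $\mu(L)=\mu(K)$ (so that $B^{1}(G,\mu(L))=0$), it identifies $\cogalois(L/K)\cong Z^{1}(G,\mu(L))\cong\operatorname{Hom}(G,\mu(K))$, does the same for the intermediate field $L'=L^{H}$ with $H\cong C_p$, observes that both Hom-sets have cardinality $|\mu(K)|$ (a homomorphism out of a cyclic group is determined by the image of a generator), and concludes $\cogalois(L'/K)=\cogalois(L/K)$, forcing $L=L'$ and a contradiction in degrees. Your approach is more direct and more elementary: you never touch $Z^{1}$ or $\cogalois$, you simply show that any $\sigma\in G$ satisfies $\sigma^{p}=1$ on a generating set of $L$ over $K$, hence on $L$. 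In fact your computation proves the stronger statement that a Galois radical extension with $\mu(L)=\mu(K)$ has $p$-elementary abelian Galois group; this is exactly what the paper records later as Remark~\ref{observacion_elemental_abeliano}. The paper's method, by contrast, generalises in a different direction: it sets up the counting machinery that is reused in Section~\ref{estimacion_para_cogalois} to bound $|\cogalois(L/K)|$.

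One small cleanup: your appeal to Proposition~\ref{galois_torsion_raices} (and the boundedness of $A/K$) is not needed. You already have $\sigma(u)-u\in L$ and $(\sigma(u)-u)^{M}=0$, so $\sigma(u)-u\in\mu(L)=\mu(K)\subseteq K$ directly, without first arranging $\Lambda_{M}\subseteq K$. This also sidesteps justifying that $A/K$ is bounded (which, as written, would require either passing to a finite generating set or invoking finiteness of $\mu(L)$).
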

\begin{proof}
Suppose that $L/K$ is a radical extension. Consider the group
$H^{1}(G,\mu(L))$. Since $\mu(L)=\mu(K)$ we have
$B^{1}(G,\mu(L))=\{1\}$. Therefore,
$H^{1}(G,\mu(L))=Z^{1}(G,\mu(L))/B^{1}(G,\mu(L))\cong {\rm
Hom}(G,\mu(L))$. From Proposition \ref{finitud_TC/L}, we obtain that
\[
\cogalois(L/K)\cong {\rm Hom}(G,\mu(K)).
\]
By Cauchy's theorem, there is an element of order $p$, say $\tau$,
in $G$. Let $H=(\tau)$ and $L^{\prime}= L^{H}$. $L^{\prime}/K$ is a
Galois extension and $G^{\prime} = \Gal(L^{\prime}/K) $ is
isomorphic to $C_{p}$. Note that $\mu(L^{\prime})=\mu(K)$. Thus
\[
\cogalois(L^{\prime}/K)\cong {\rm Hom}(G^{\prime},\mu(K)).
\]
We will see that the cardinality of $\cogalois(L/K) \cong {\rm Hom}
(G, \mu(K)) $ is $\mid\!\! \mu(K) \!\!\mid$. Let $a \in G \cong
C_{p^{2}} $ be a generator. A homomorphism $\psi: G \rightarrow
\mu(K)$ is completely determined by its action on $a$. Hence there
exists $\mid\!\!\mu(K)\!\!\mid$ homomorphisms from $G$ to $\mu(K)$.
Similarly we can show that the cardinality of
$\cogalois(L^{\prime}/K) \cong {\rm Hom} (G^{\prime}, \mu(K)) $ is
$\mid\!\!\mu(K)\!\!\mid$.

On the other hand we have $\cogalois(L^{\prime}/K) \subseteq
\cogalois(L/K) $, see Proposition \ref{prop_cog}, and since both
have the same order, we have $\cogalois(L^{\prime}/K) =
\cogalois(L/K)$. It follows that $L = L^{\prime}$ since if
$\alpha_{1}, \ldots, \alpha_{s}$ generate $L$ over $K$, then
$\alpha_{1}, \ldots , \alpha_{s}\in L^{\prime}$. Therefore $
p^{2}=[L:K] = [L^{\prime}: K] = p$ which is a contradiction.
\end{proof}

The following example shows that the property of being radical
extension is not hereditary.

\begin{ejemplo}\label{noredes_cogalois}
{\rm{Let $M=P^{n}$, $n\in {\mathbb{N}}$ with $P\in R_{T}$
irreducible. Consider the field extension $k(\Lambda_{M})/
k(\Lambda_{P})$. Let $t \in {\mathbb{N}} $ be such that $p^{t-1} < n
\leq p^{t} $ and $n_{0}$ the integer part of $\frac{n}{p^{t-1}}$.

From Corollary 1 of \cite{Lam}, it follows that
\[
H_{M}\cong ({\mathbb{Z}}/p^{t}{\mathbb{Z}})^{\alpha}\times 
{\mathbb{Z}}/p^{n_{1}}{\mathbb{Z}}\times \cdots \times 
{\mathbb{Z}}/p^{n_{s}}{\mathbb{Z}}
\]
with $t> n_{1}\geq \cdots \geq n_{s}\geq 0$. Here $H_{M}$
is Galois group of the field extension
$k(\Lambda_{M})/k(\Lambda_{P})$.

Let $n=5$ and $p=3$. Then $p^{t-1} < n \leq p^{t}$ holds for $t=2$
and we have $n_{0} = 1$. The value of $\alpha$ is given in Corollary
1 of \cite{Lam}.

Choose $H$ a subgroup of $H_{M}$ of the form
\[
H=({\mathbb{Z}}/p^{t}{\mathbb{Z}})^{\alpha-1}\times {\mathbb{Z}}/p^{n_{1}}
{\mathbb{Z}}\times \cdots \times {\mathbb{Z}}/p^{n_{s}}{\mathbb{Z}}.
\]
Let $L^{\prime}=L^{H}$. Therefore
$\Gal(L^{\prime}/k(\Lambda_{P}))\cong C_{p^{2}}$. We have $\mu
(k(\Lambda_{P})) = \mu(L^{\prime})$ for suitable $q = p^{\nu}$.

From Lemma \ref{auxiliar_contra_cogalois} we have that $L^{\prime}
/k(\Lambda_{P})$ is not a radical extension. Therefore
$k(\Lambda_{P^{5}}) / k(\Lambda_{P}) $ is a Galois radical
cyclotomic extension, that does not satisfy the property that if $L$
is a field such that $ k(\Lambda_{P}) \subseteq L \subseteq
k(\Lambda_{P^{5}}) $, then $L/k(\Lambda_{P})$ is a radical
extension.\hfill $\square$}}
\end{ejemplo}

\begin{ejemplo}\label{ejemplo_schultheis}
{\rm{Let $q = p \geq 3 $. Consider the extension $L/k(\Lambda_{T})
$, where $L$ is the splitting field of the polynomial $f(X) =
X^{T} - 1$, with coefficients in $k(\Lambda_{T})$. The degree of
this extension is $[L: k(\Lambda_{T})] = p$, see Example
\ref{ejemplo4}. We study the structure of $\cogalois(L /
k(\Lambda_{T}))$.

Suppose that $\overline{\beta} \in \cogalois(L / k(\Lambda_{T})) $
has order $Q^{r}$, with $Q$ monic irreducible polynomial, $r\geq 1$
and $Q\neq T$. Since
$\lambda_{Q}=\lambda^{Q^{r-1}}_{Q^{r}}\in L$ and since
$L/k(\Lambda_{T})$ is a pure extension it follows that $\lambda_{Q}
\in k(\Lambda_{T}) $. From Proposition \ref{finitud_RC} we obtain
that $Q =T$ which is a
contradiction. Therefore
\[
\cogalois(L/k(\Lambda_{T}))\cong \cogalois(L/k(\Lambda_{T}))_{T}
\]
where $\cogalois(L/k(\Lambda_{T}))_{T}$ is the set of
elements of $\cogalois(L/ k(\Lambda_{T})) $ whose order is a power
of $T$.

Next we compute the cardinality of $\cogalois(L/k(\Lambda_{T}))$.
Here we need an auxiliary result.

Let $z \in k$, $z \neq 0$, and $N \in R_{T}$ be a nonconstant
polynomial. Consider $g(X) = X^{N} - z \in k(\Lambda_ {N})[X]$. The
splitting field of $g(X)$ over $k$ is of the form $K = k(\alpha,
\lambda_{N}) $ where $\alpha$ is an arbitrary root of $g(X)$ and
$\lambda_{N} $ is a generator of $\Lambda_{N}$. Since the polynomial
$g(X)$ is separable, $K/k$ is a Galois extension.

Let $G = \Gal(K/ k)$. Given $\sigma \in G$ we have that
$\sigma(\alpha) = \alpha + \lambda^{M_{\sigma}}_N$ and
$\sigma(\lambda_N) = \lambda^{N_{\sigma}}_N$, where $M_{\sigma}$ and
$N_{\sigma}$ are determined up to a multiple of $N$, and
$N_{\sigma}$ is relatively prime to $N$.

Let $G(N)$ be the subgroup of $GL_{2} (R_{T} / (N))$ of all matrices
of the form
\begin{displaymath}
\left(\begin{array}{ccc} 1 & 0  \\
 \overline{B} & \overline{A}
\end{array}\right),
\end{displaymath}
where $\overline{B}\in R_{T}/(N)$ and $\overline{A}\in
(R_{T}/(N))^{*}$. We have that ${\rm card} (G (N)) = q^{\grado(N)}
\Phi(N)$. Let $\theta: G \rightarrow G(N)$ be defined as follows:
\begin{displaymath}
\theta(\sigma) = \left(\begin{array}{ccc} 1 & 0  \\
 \overline{M_{\sigma}} & \overline{N_{\sigma}}
\end{array}\right).
\end{displaymath}

We have the following lemma.

\begin{lema}\label{auxiliar_noabeliano_galois}
Let $K/k$ and $\theta$ be as above. Then $\theta$ is a group
monomorphism. On the other hand, if $N = P$, where $P$ is a monic
irreducible polynomial and $z \in R_{T}$ is as before and the
equation $g(X) = 0$ has no solutions in $R_{T}$, then $\theta$ is a
group isomorphism.
\end{lema}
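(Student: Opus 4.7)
The plan is to treat the two parts of the statement separately: first show that $\theta$ is an injective group homomorphism in the general setting, and then deduce surjectivity under the additional hypotheses by a degree--counting argument.

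For the homomorphism and injectivity, I would use the fact that each Galois automorphism commutes with the Carlitz--Hayes action. Unfolding the composition on the generators $\alpha$ and $\lambda_N$ of $K$,
\begin{align*}
(\sigma\circ\tau)(\alpha) &= \sigma(\alpha+\lambda_N^{M_\tau})
=\alpha+\lambda_N^{M_\sigma}+\sigma(\lambda_N)^{M_\tau}
=\alpha+\lambda_N^{M_\sigma+N_\sigma M_\tau},\\
(\sigma\circ\tau)(\lambda_N) &= \sigma(\lambda_N^{N_\tau}) = \lambda_N^{N_\sigma N_\tau},
\end{align*}
so $M_{\sigma\tau}\equiv M_\sigma+N_\sigma M_\tau\pmod N$ and $N_{\sigma\tau}\equiv N_\sigma N_\tau\pmod N$, which are precisely the entries of the matrix product $\theta(\sigma)\theta(\tau)$. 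Injectivity is immediate, because $\theta(\sigma)=I$ forces $\sigma$ to fix both $\alpha$ and $\lambda_N$, hence all of $K=k(\alpha,\lambda_N)$.

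For surjectivity in the case $N=P$, write $d:=\grado(P)$, so that $|G(P)|=q^d\Phi(P)=q^d(q^d-1)$. Since $\theta$ is already injective, it is enough to prove $|G|=q^d(q^d-1)$. By Theorem \ref{ciclotomico_carlitz}, $[k(\lambda_P):k]=\Phi(P)=q^d-1$, so the task reduces to showing $[K:k(\lambda_P)]=q^d$. Now $K/k(\lambda_P)$ is the Carlitz--Kummer extension generated by a root of $X^P-z$ (Example \ref{schu_cogalois}), and by Proposition 2.3 of \cite{Schultheis} its degree equals $q^d$ as soon as $z$ is not a Carlitz $P$-th power in $k(\lambda_P)$.

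The main obstacle will be translating the hypothesis that $X^P-z$ has no root in $R_T$ into the assertion $z\notin k(\lambda_P)^P$. The plan is to argue by contradiction: if $w\in k(\lambda_P)$ satisfies $w^P=z$, then for every $\tau\in\Gal(k(\lambda_P)/k)$ one has $(\tau(w)-w)^P=0$, so $\tau\mapsto\tau(w)-w$ is a crossed homomorphism from the cyclic group $(R_T/(P))^*$ into $\Lambda_P\cong\mathbb{F}_{q^d}$, and a direct cyclic--cohomology computation shows this $H^1$ vanishes. Consequently $w=w_0+v$ with $w_0\in k$ and $v\in\Lambda_P$, so $w_0^P=z$. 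Writing $w_0=f/g$ with $f,g\in R_T$ coprime and expanding $w_0^P=\sum_{j=0}^{d}b_j w_0^{q^j}$ with $b_j\in R_T$, $b_d=1$, a $Q$--adic valuation estimate at any prime $Q\mid g$ forces $v_Q(z)=-q^d v_Q(g)<0$, contradicting $z\in R_T$. Hence $w_0\in R_T$, contradicting the hypothesis, and so $[K:k(\lambda_P)]=q^d$, making $\theta$ an isomorphism.
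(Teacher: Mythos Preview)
Your treatment of the monomorphism part is essentially identical to the paper's: both unfold the action of $\sigma\tau$ on the generators $\alpha$ and $\lambda_N$ and read off the matrix product, and both note that $\theta(\sigma)=I$ forces $\sigma$ to fix the generators.

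For the isomorphism part the two arguments diverge. The paper simply invokes Theorem~1.7~(4) of \cite{hsu}, which under the stated hypotheses (\,$N=P$ irreducible, $z\in R_T$, and $g(X)=0$ has no root in $R_T$\,) gives directly $|\Gal(K/k(\lambda_P))|=q^{\deg P}$, hence $|G|=|G(P)|$ and $\theta$ is onto. Your route is longer but more self-contained: you reduce surjectivity to showing $[K:k(\lambda_P)]=q^d$, which by Carlitz--Kummer theory follows once $z\notin k(\lambda_P)^P$; you then \emph{prove} this last statement by descending a hypothetical $P$-th root from $k(\lambda_P)$ down to $R_T$. The descent is the substantive new content: the vanishing of $H^1\big((R_T/P)^{*},\Lambda_P\big)$ (immediate since $|\,(R_T/P)^{*}|=q^d-1$ is prime to $p$ while $\Lambda_P$ is a $p$-group) lets you write $w=w_0+v$ with $w_0\in k$, and the valuation argument on $w_0^P=\sum_{j=0}^d b_j\,w_0^{q^j}$ (the $j=d$ term strictly dominating at any prime of the denominator) forces $w_0\in R_T$, contradicting the hypothesis. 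In effect you are reproving, in this special case, exactly the content supplied by Hsu's theorem. The paper's approach is shorter; yours makes transparent \emph{why} the a~priori weaker hypothesis ``no root in $R_T$'' already rules out a root in the larger field $k(\lambda_P)$.
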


\begin{proof}
Let $\sigma,\tau\in G$. Then $\sigma(\tau(\alpha)) = \sigma(\alpha +
\lambda^{M_{\tau}}) = \alpha + \lambda^{M_{\sigma}} +
\lambda^{M_{\tau}N_{\sigma}}$ and $\sigma(\tau(\lambda)) =
\sigma(\lambda^{N_{\tau}}) = \lambda^{N_{\sigma} N_{\tau}}$.
Then
\begin{displaymath}
\theta(\sigma\cdot\tau) = \left(\begin{array}{ccc} 1 & 0  \\
 \overline{M_{\sigma} + M_{\tau}N_{\sigma}} & \overline{N_{\sigma}
 N_{\tau}}
\end{array}\right) = \left(\begin{array}{ccc} 1 & 0  \\
 \overline{M_{\sigma}} & \overline{N_{\sigma}}
\end{array}\right)\left(\begin{array}{ccc} 1 & 0  \\
 \overline{M_{\tau}} & \overline{N_{\tau}}
\end{array}\right)= \theta(\sigma)\theta(\tau)
\end{displaymath}
Therefore $\theta$ is a group homomorphism. If
$\theta(\sigma)$ is the identity matrix we have that $M_{\sigma}$ is
a multiple of $N$ and that $N_{\sigma} = 1 + NQ$. Thus $\theta$ is a
group monomorphism.

When $N = P$, where $P$ is a monic irreducible polynomial, $z \in
R_{T}$ is as before and the equation $g(X) = 0$ has no solutions in
$R_{T}$, then by Theorem 1.7 (4) of \cite{hsu}, we have that $\Gal(K
/ k(\lambda_{P}))$ is of order $q^{\grado(P)} $. Hence $\theta$ is
an isomorphism.
\end{proof}

Coming back to the example,
it will be shown that $\mu(L) = \Lambda_{T}$. To begin with,
certainly $\Lambda_{T} = \mu(k(\Lambda_{T})) \subseteq \mu(L)$. On
the other hand let $u \in \mu(L) $ be nonzero. There exists $N \in
R_{T}$ such that $u^{N} = 0$. Therefore $u$ is of the form
$\lambda^{M}_{N}$. We may assume that $(M, N) = 1$. Thus, from
Proposition 12.2.21 of \cite{villa_salvador}, we obtain that
$\lambda_{N} \in L$. Let $N=P^{\alpha_{1}}_{1}\cdots
P^{\alpha_{s}}_{s}$. Then
$\lambda_{P_{i}}=\lambda^{P^{\alpha_{1}}_{1}\cdots
P^{\alpha_{i}-1}_{i}\cdots P^{\alpha_{s}}_{s}}_{N}\in L$. Since
$L/k(\Lambda_{T})$ is pure it follows that $\lambda_{P_{i}} \in
k(\Lambda_{T})$. Thus $P_{i}=T$. Therefore $N=T^{n}$ with $n\in
{\mathbb{N}}$.

Suppose that $n \geq 2$ and consider the diagram
\[
 \xymatrix{& L\ar@{-}[dl]\ar@{-}[dr]  &\\
k(u)\ar@{-}[dr] & & k(\Lambda_{T})\ar@{-}[dl] \\
 & k&}
\]

We have $[L: k(u)] \Phi(T^{n}) = p (p-1)$. Since $\Phi(T^{n}) =
p^{n-1} (p-1) $, it follows that $[L: k(u)] p^{n-1} = p$. If $n\geq
3$ then $n-2\geq 1$. Thus $[L:k(u)]p^{n-2}=1$ which is a
contradiction. It only remains to consider the case $n = 2$, so that
$L = k (u)$. From Lema \ref{auxiliar_noabeliano_galois} we have that
$\Gal(L / k)$ is not an abelian group, contrary to the fact that the
group $\Gal(k(\Lambda_{T^2})/ k)$ is an abelian group. Therefore
$n=1$ and $u=\lambda^{M}_{T}\in k(\Lambda_{T})$.

From Lemma \ref{auxiliar_coho_cogalois} we have
that $B^{1} (G, \mu(L)) = \{0 \}$. 
Then $H^{1}(G,\mu(L))=Z^{1}(G,\mu(L))/B^{1}(G,\mu(L))\cong {\rm
Hom}(G,\mu(L))$. 
Thus, using the proof of Lemma
\ref{auxiliar_contra_cogalois}, we obtain $\mid \cogalois(L /
k(\Lambda_ {T})) \mid = [L: k(\Lambda_{T})] = p $.\hfill $\square$}}
\end{ejemplo}

\begin{ejemplo}\label{ejemplo_entre_ciclotomicos}
{\rm{Consider the extension $k(\Lambda_{P^n}) / k(\Lambda_{P})$.
We compute the order of $\cogalois (k (\Lambda_{P^n}) /
k(\Lambda_{P})) $ in the following case: $P = T$, $q = p > 2$ and $n
= 2$. Let $H_{T^{2}}=\{\overline{N}\in R_{T}/(T^{2})\mid (N,T^{2})=1
\text{ and } N\equiv 1 \bmod T\}$, then ${\rm card}(H_{T^{2}}) =
q^{d (n-1)} = p$, with $d = \grado(P (T)) = 1$. In particular
$H_{T^2}$ is a cyclic group. We have
\begin{gather*}
H^{1}(H_{T^{2}},\Lambda_{T^{2}}) \cong \ker(N_{H_{T^{2}}})/ D\Lambda_{T^{2}},\\
\intertext{where we define $N_{H_{T^{2}}}:\Lambda_{T^{2}}\rightarrow
\Lambda_{T^{2}}$ and $D:\Lambda_{T^{2}}\rightarrow \Lambda_{T^{2}}$
by}
N_{H_{T^{2}}}(x) = x + \sigma\cdot x + \cdots + \sigma^{p-1}\cdot x,\\
D(x) = \sigma\cdot x - x,
\end{gather*}
where $\sigma = 1 + T + (T^{2})$ is a generator of $H_{T^2}$ and
$x \in \Lambda_{T^2}$. On the other hand if $x =
\lambda^{M}_{T^{2}}$ we have
\[
N_{H_{T^{2}}}(x) = \lambda^{M}_{T^{2}}+ \lambda^{M(1+T)}_{T^{2}}
+\cdots +\lambda^{M(1+(p-1)T)}_{T^{2}} =\lambda^{pM+(1+2+\cdots +
p-1)MT}_{T^{2}} = 0.
\]
Note that $1 + 2 + \cdots + (p-1) = 0$ since $ \frac{p (p-1)}{2} =0$
in $ {\mathbb{F}}_{p}$. Thus we have that $\ker(N_{H_{T^2}}) =
\Lambda_{T^{2}} $. We also have $ D(x)
=\lambda^{M(1+T)}_{T^{2}}-\lambda^{M}_{T^{2}} = \lambda^{M}_{T}$.
Thus
\[
D \Lambda_{T^{2}} = \Lambda_{T}
\]
Therefore $H^{1}(H_{T^{2}},\Lambda_{T^{2}}) =
\Lambda_{T^{2}}/\Lambda_{T}$. On the other hand, from Lemma
\ref{auxiliar_coho_cogalois}, we have that ${\rm card} (B^{1}
(H_{T^2}, \Lambda_{T^{2}})) = {\rm card} (\Lambda_{T^{2}} /
\Lambda_{T}) $ and since
\[
H^{1}(H_{T^{2}},\Lambda_{T^{2}}) = Z^{1}(H_{T^{2}},
\Lambda_{T^{2}})/B^{1}(H_{T^{2}},\Lambda_{T^{2}})
\]
from Proposition \ref{finitud_TC/L} follows that
\[
\mid({\rm cog}(k(\Lambda_{T^{2}})/k(\Lambda_{T})))\mid =
 \mid(Z^{1}(H_{T^{2}},\Lambda_{T^{2}}))\mid =
[k(\Lambda_{T^{2}}):k(\Lambda_{T})]^{2}.
\]
}}
\end{ejemplo}

The following lemma shows that certain extensions have properties
analogous to those provided in Lemma 1.3 of \cite{greither}, namely
steps 1 and 2. However we will see that these properties do not hold
in general.

\begin{lema}\label{clave_cogalois}
Consider the extension $L/k(\lambda_{P}) $, where $L$ is the
splitting field of the polynomial $ X^{P}-a $, where $P \in R_{T}$
is irreducible and $a \in k(\lambda_{P}) \setminus
k(\lambda_{P})^{P}$. The module $\cogalois(L / k(\lambda_{P}))$ has
no elements of order $Q$, where $Q$ is an irreducible polynomial
different from $P$. Furthermore if $\nu_{{\mathfrak {p}}}(a) \geq
q^{d}$, where $d = \grado(P)$, we have that $\cogalois(L / K)$ has
no elements of order $P^{2}$.
\end{lema}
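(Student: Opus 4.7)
For the first assertion I reduce to $r=1$: if $\overline{\beta}$ has order $Q^{r}$ then $\gamma:=\beta^{Q^{r-1}}$ lies in $L\setminus k(\lambda_{P})$ with $\gamma^{Q}\in k(\lambda_{P})$, so $\overline{\gamma}$ has order $Q$. The extension $L/k(\lambda_{P})$ is Galois (it is the splitting field of $X^{P}-a$ over $k(\lambda_{P})$, which contains $\lambda_{P}$), so some $\sigma\in\Gal(L/k(\lambda_{P}))$ satisfies $\sigma(\gamma)\neq\gamma$; then $(\sigma(\gamma)-\gamma)^{Q}=\sigma(\gamma^{Q})-\gamma^{Q}=0$, producing a nonzero element of $\Lambda_{Q}\cap L$. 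Since $\Lambda_{Q}\cong R_{T}/(Q)$ is a simple $R_{T}$-module, the submodule generated by this element is all of $\Lambda_{Q}$, so $\lambda_{Q}\in L$. By Example \ref{schu_cogalois}, $L/k(\lambda_{P})$ is a Carlitz--Kummer extension and hence radical cyclotomic, in particular pure, so $\lambda_{Q}\in k(\lambda_{P})$. Proposition \ref{finitud_RC} then gives $\lambda_{Q}\in\mu(k(\lambda_{P}))=\Lambda_{P}$, forcing the monic irreducible $Q$ to equal $P$, a contradiction.

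For the second assertion the plan is to prove that $\lambda_{P^{2}}\notin L$ under the hypothesis, and then to conclude via Theorem \ref{finitud_TC/L}. The inclusion $\Gal(L/K)\hookrightarrow\Lambda_{P}$ given by $\sigma\mapsto\sigma(\alpha)-\alpha$ shows $[L:K]\leq q^{d}=[k(\lambda_{P^{2}}):K]$; hence $\lambda_{P^{2}}\in L$ would force $L=k(\lambda_{P^{2}})$. In that case $\alpha\in k(\lambda_{P^{2}})$ with $\alpha^{P}=a$, and an elementary count via additive Hilbert 90 shows that $\{x\in k(\lambda_{P^{2}})\colon x^{P}\in K\}=K+\Lambda_{P^{2}}$. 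Thus I can write $\alpha=\lambda_{P^{2}}^{B}+\eta$ with $B\in R_{T}$ coprime to $P$ (since $\alpha\notin K$) and $\eta\in K$, whence $a=\lambda_{P}^{B}+\eta^{P}$.

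The key step is a valuation computation at the prime $\mathfrak{P}$ of $K$ above $\mathfrak{p}$, which has ramification index $q^{d}-1$. Since $\lambda_{P}$ is a uniformizer of $\mathfrak{P}$ and $P\nmid B$, the linear term $B(T)\lambda_{P}$ of $\psi_{B}(\lambda_{P})$ dominates and gives $\nu_{\mathfrak{P}}(\lambda_{P}^{B})=1$. For $\eta^{P}$, using the Carlitz expansion $\psi_{P}(X)=X^{q^{d}}+[P]_{d-1}X^{q^{d-1}}+\cdots+PX$ with $\nu_{\mathfrak{p}}([P]_{i})\geq 1$ for $0\leq i<d$ (standard in Carlitz module theory), a case analysis on $m=\nu_{\mathfrak{P}}(\eta)$ yields $\nu_{\mathfrak{P}}(\eta^{P})\in(-\infty,0]\cup[q^{d},\infty)$, and in particular $\nu_{\mathfrak{P}}(\eta^{P})\neq 1$. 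Consequently $\nu_{\mathfrak{P}}(a)=\min(1,\nu_{\mathfrak{P}}(\eta^{P}))\leq 1$, contradicting $\nu_{\mathfrak{P}}(a)=(q^{d}-1)\nu_{\mathfrak{p}}(a)\geq(q^{d}-1)q^{d}$. Hence $\lambda_{P^{2}}\notin L$; together with part~1 and the cyclic structure of $\Lambda_{P^{n}}$ this forces $\mu(L)=\Lambda_{P}$. Since $\Gal(L/K)$ acts trivially on $\Lambda_{P}\subseteq K$, Theorem \ref{finitud_TC/L} gives $\cogalois(L/K)\cong\mathrm{Hom}(\Gal(L/K),\Lambda_{P})$, which is annihilated by $P$ and therefore has no element of order $P^{2}$. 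The main obstacle is precisely this valuation analysis, which hinges on the standard but nontrivial valuation estimate for the Carlitz coefficients $[P]_{i}$.
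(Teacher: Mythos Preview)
Your argument for the first assertion is correct and matches the paper's approach: both deduce $\lambda_Q \in L$ from the Galois property, then use purity together with $\mu(k(\lambda_P)) = \Lambda_P$ (Proposition~\ref{finitud_RC}) to force $Q = P$.

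For the second assertion your approach has a genuine gap. The claimed identity $\{x \in k(\lambda_{P^2}) : x^P \in K\} = K + \Lambda_{P^2}$ fails unless $q=p$ and $\deg P = 1$. Modulo $K$, the left side is the $P$-torsion of $\cogalois(k(\lambda_{P^2})/K)$, which by Theorem~\ref{finitud_TC/L} identifies with $\{f\in Z^1(G',\Lambda_{P^2}):\mathrm{im}(f)\subseteq\Lambda_P\}=\mathrm{Hom}(G',\Lambda_P)$, where $G'=\Gal(k(\lambda_{P^2})/K)$ acts trivially on $\Lambda_P\subseteq K$. Since $G'\cong\Lambda_P\cong\mathbb{F}_p^{\,\nu d}$ as abelian groups (with $q=p^\nu$), this has order $p^{(\nu d)^2}$, whereas $(K+\Lambda_{P^2})/K\cong\Lambda_{P^2}/\Lambda_P$ has order $p^{\nu d}$. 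Hence for $\nu d>1$ there exist $\alpha\in k(\lambda_{P^2})$ with $\alpha^P\in K$ that are \emph{not} of the form $\lambda_{P^2}^B+\eta$, and your valuation computation does not cover them. Additive Hilbert~90 only furnishes some $u\in k(\lambda_{P^2})$ with $f_\alpha=f_u$; it gives no reason for $u$ to lie in $\Lambda_{P^2}$. The paper bypasses all of this with a ramification argument: once $\lambda_{P^2}\in L$, the prime $P$ has ramification index divisible by $\Phi(P^2)$ in $L/k$, but Schultheis's Theorem~3.9 (this is where the hypothesis $\nu_{\mathfrak p}(a)\geq q^d$ enters) forces that ramification index to be exactly $\Phi(P)$, a contradiction.
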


\begin{proof}
Suppose that there exists an irreducible element of order $Q$ in
$\cogalois(L/k(\lambda_{P}))$. Since $L / k(\lambda_{P}) $ is a
Galois extension, we have $\lambda_{Q} \in L$ and since $L /
k(\lambda_{P})$ is radical cyclotomic extension, from Proposition
\ref{finitud_RC} we have $\lambda_{Q} \in \mu(k(\lambda_P)) = \Lambda_{P}$.
Therefore $Q = P$.

Now assume that $\cogalois (L / k(\lambda_{P}))$ has an element of
order $P^{2}$, that is, there is $\bar{\beta} \in {\rm cog} (L /
K)$ such that $\beta^{P^{2}} = b \in k(\lambda_P)$. Then, since $L /
k(\lambda_{P})$ is radical, from Proposition
\ref{galois_torsion_raices} it follows that $\lambda_{P^2} \in L$.
Consider the following diagram
\[
 \xymatrix{{\mathcal{O}}_{L}\ar@{-}[d]\ar@{-}[r] & L\ar@{-}[d] \\
 {\mathcal{O}}_{k(\lambda_{P^{2}})}\ar@{-}[d]\ar@{-}[r]& 
k(\lambda_{P^{2}})\ar@{-}[d]\\
 {\mathcal{O}}_{k(\lambda_P)}\ar@{-}[d]\ar@{-}[r]& k(\lambda_P)\ar@{-}[d]\\
 R_{T}\ar@{-}[r] & k}
\]

Now the ramification index of the prime $P$ in the extension
$k(\lambda_{P}^{2})/k$ is $\Phi(P^{2}) $ so the ramification index
of $P$ in the extension $L /k$ is $\widetilde{d} \Phi(P^{2})$,
where $ \widetilde{d} = e_{L /k(\lambda_{P^2})} $. From Theorem
3.9 of \cite{Schultheis} we have that this ramification index is
$\Phi(P)$. In other words, $d \Phi(P^{2}) = \Phi(P)$, which is
absurd.
\end{proof}

\begin{ejemplo}\label{ejemplo6_1}
{\rm{Let $P, Q \in R_{T}$, be different irreducible polynomials.
Consider the extension $L = k(\Lambda_{P^{2} Q^{2}}) / k$. Note that
$ L = k(\lambda_{P^{2}}, \lambda_{Q^{2}}) $. Let $\sigma = 1 + PQ
\in G = \Gal(L / k)$. Since
$\lambda^{1+PQ}_{P^{2}Q^{2}}=\lambda_{P^{2}Q^{2}}+\lambda_{PQ}\neq
\lambda_{P^{2}Q^{2}}$, we have that $\sigma\neq 1$.

We have $\sigma(\lambda_{PQ})=\lambda^{1+PQ}_{PQ}=\lambda_{PQ}$.
Therefore if $K$ is the field fixed by $(\sigma)$, we have
$\lambda_{PQ} \in K $.

On the other hand $\sigma^{p} = (1 + PQ)^{p} = 1 + P^{p} Q^{p}
\equiv 1 \bmod \, P^{2} Q^{2} $, so that the order of $\sigma$ is
$p$. Hence $[L: K] = p$.

Since $\sigma(\lambda_{P^{2}}) = \lambda_{P^{2}} + \lambda^{Q}_{P}
\neq \lambda_{P^{2}} $ we have $\alpha=\lambda_{P^{2}}\notin K$.
Analogously it can be shown that $\beta = \lambda_{Q^{2}} \notin K$.

Since $[L: K] = p$, we have that $L = K (\alpha) = K (\beta)$,
$\alpha^{P} = \lambda_{P} $ and $\beta^{Q} = \lambda_{Q} $.
Therefore $\cogalois(L / K)$, contains elements of order $P$ and
elements of order $Q$.}}
\end{ejemplo}

\begin{ejemplo}\label{ejemplo7_1}
{\rm{Let $q=p^{\nu}$ with $p\geq 3$. Let $L = k(\Lambda_{P^{3}}) $ and
$\sigma = 1 + P \in \Gal(L/k(\Lambda_{P}))$. We have $\sigma^{p} =
(1 + P)^{p} \equiv 1 \bmod \, P^{3}$. Furthermore $\sigma \neq 1$
since $\sigma (\lambda_{P^{3}}) = \lambda_{P^{3}} + \lambda_{P^{2}}
\neq \lambda_{P^{3}}$.

Let $K = L^{(\sigma)}$. We have $[L: K] = p$. On the other hand
$\sigma(\lambda_{P^{2}}) = \lambda_{P^{2}} + \lambda_{P} \neq
\lambda_{P^{2}}$. Therefore $\alpha = \lambda_{P^{2}} \notin K$.
Thus $L = K(\alpha)$ and $\alpha^{P} = a \in K$.

Since $\lambda^{P^{2}}_{P^{3}}\in K$ and $\lambda^{P}_{P^{3}} \notin
K$, $\lambda_{P^{3}} \in L$ has order $P^{2} $. Therefore
${\cogalois} (L / K)$ has elements of order $P^{2}$.\hfill
$\square$}}
\end{ejemplo}

Examples \ref{ejemplo6_1} and \ref{ejemplo7_1} show that we do not
have analogues of Lemma 1.3 of \cite{greither}, namely if $L / K$ is
a cogalois extension, in the classical sense, and is such that $[L:
K] = p$, with $L = K(\alpha) $, $\alpha^{p} = a \in K$ and $L / K$
is separable and pure, then

(a) The group $\cogalois(L / K)$ has no elements of order $q \neq
p$, $q$ a prime number.

(b) The group $\cogalois(L / K)$ has no elements of order $p^{2}$.

\section{A bound for $\mid \cogalois(L/K)\mid$}\label{estimacion_para_cogalois}

In this section we establish an upper bound for the order of
$\cogalois(L / K)$. In what follows let $q = p^{\nu}$, and let $L/K$
be a radical extension. The following lemma will be needed in the
section.

\begin{lema}\label{auxiliar_raicescarlitz}
Let $K/k$ be a finite extension. Then there exists $M\in R_{T}$ such
that $\mu(K)=\Lambda_{M}$.
\end{lema}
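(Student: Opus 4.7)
My plan is to consider the set $\mathcal{S}=\{N\in R_T\text{ monic}:\Lambda_N\subseteq K\}$ and show that it has a unique maximum element $M$ under divisibility, so that $\mu(K)=\Lambda_M$. First, I would observe that $\mathcal{S}$ is nonempty (it contains $N=1$) and closed under least common multiples: if $N_1,N_2\in\mathcal{S}$, then the sum $\Lambda_{N_1}+\Lambda_{N_2}$ lies in $K$, and, using $|\Lambda_{N_i}|=q^{\deg N_i}$ together with $\Lambda_{N_1}\cap\Lambda_{N_2}=\Lambda_{\gcd(N_1,N_2)}$, a cardinality count shows that this sum equals $\Lambda_{\operatorname{lcm}(N_1,N_2)}$, which is therefore contained in $K$.

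The critical step is to show $\mathcal{S}$ is finite. By Theorem \ref{ciclotomico_carlitz}, if $\Lambda_N\subseteq K$ then $\Phi(N)=[k(\Lambda_N):k]\leq [K:k]=:B$. I would then argue directly that only finitely many monic $N\in R_T$ can satisfy $\Phi(N)\leq B$: any irreducible divisor $P$ of such $N$ has $\Phi(P)=q^{\deg P}-1\leq B$, bounding $\deg P$ and leaving only finitely many possible primes, and for each such $P$ the explicit formula $\Phi(P^e)=q^{(e-1)\deg P}(q^{\deg P}-1)$ bounds the admissible exponent $e$. Consequently $\mathcal{S}$ is finite, and combined with closure under $\operatorname{lcm}$ it has a (unique) maximum element $M=\operatorname{lcm}\mathcal{S}$.

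Finally, I would verify $\mu(K)=\Lambda_M$ by double inclusion. The inclusion $\Lambda_M\subseteq\mu(K)$ is immediate since $\Lambda_M\subseteq K$ and every element of $\Lambda_M$ is annihilated by $M$. For the reverse, given $u\in\mu(K)$, let $N$ be the monic generator of the annihilator ideal of $u$ in $R_T$. The cyclic $R_T$-submodule $R_T\cdot u\subseteq K$ is then isomorphic to $R_T/(N)$, of cardinality $q^{\deg N}$, and is contained in $\Lambda_N$, which has the same cardinality; hence $R_T\cdot u=\Lambda_N\subseteq K$. Therefore $N\in\mathcal{S}$, so $N\mid M$ and $u\in\Lambda_N\subseteq\Lambda_M$.

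The main obstacle I anticipate is the finiteness of $\mathcal{S}$, but this reduces to the elementary estimate on $\Phi$ for prime powers indicated above and is therefore routine.
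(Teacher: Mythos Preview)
Your proof is correct and follows essentially the same approach as the paper: both arguments show that each Carlitz root $u\in\mu(K)$ generates a full torsion module $\Lambda_N\subseteq K$ (the paper via writing $u=\lambda_N^{D}$ with $(D,N)=1$, you via the annihilator and a cardinality count), bound the admissible $N$ by $\Phi(N)\le [K:k]$, and take the least common multiple to obtain $M$. Your presentation is more detailed and organized around the set $\mathcal{S}$, but the underlying ideas are identical.
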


\begin{proof}
For each $w\in\mu(K)$ there exist $D_{w},N_{w}\in R_{T}$ such that
$w=\lambda^{D_{w}}_{N_{w}}$. Canceling common factors we may assume
that the greatest common divisor of $D_{w}$ and $N_{w}$ is $1$.
Therefore $\Lambda_{N_{w}}\subseteq K$ and we obtain that $\mu(K)$
is finite. Thus there exist $N_{1},\ldots ,N_{r}\in R_{T}$ such that
$\Lambda_{N_{i}}\subseteq K$, for each $i=1,\ldots ,r$. Note that
$\mu(K)\subseteq\cup^{r}_{i=1}\Lambda_{N_{i}}$. Let $M\in R_{T}$ be
the least common multiple of $N_{1},\ldots, N_{r}$. Then
$\lambda_{M}\in K$. Since $\Lambda_{N_{i}}\subseteq\Lambda_{M}$ for
each $i=1,\ldots,r$, it follows that $\mu(K)=\Lambda_{M}$.
\end{proof}

\begin{comentario}\label{observacion_elemental_abeliano}
{\rm{If $L / K$ is a Galois and radical cyclotomic extension such
that $\mu(K) = \mu(L) $ then, since $L$ is radical, it is of the
form $K(\rho_{1}, \ldots, \rho_{t}) $, with $\rho^{M_{i}}_{i} =
a_{i} \in K$ for some $M_{i} \in R_{T}$. On the other hand the roots
of the polynomial $X^{M_{i}}-a_{i}$ are
$\{\rho_{i}+\lambda^{A}_{M_{i}}\}_{A\in R_{T}}$. Therefore
$\Gal(K(\rho_{i}) / K) \subseteq \Lambda_{M_ {i}}$. Thus $\Gal(K
(\rho_{i}) / K)$ is a $p$-elementary abelian group.

Since we have an injective map
\[
\Gal(L / K) \hookrightarrow \prod^{t}_{i = 1} \Gal(K (\rho_{i}) / K),
\]
it follows that $\Gal(L / K)$ is a $p$-elementary abelian
group.

For any field $L$ such that $L/k$ is finite, we have
$\mu(L)=\Lambda_{M}$ for some $M\in R_{T}$, by Lemma
\ref{auxiliar_raicescarlitz}. We define
\[
\grado(\mu (L)) = \grado(M).
\]
}}
\end{comentario}

\begin{lema}\label{auxiliar_coho_cogalois}
Let $ L / K $ be a finite radical cyclotomic Galois extension. Then
\[
B^{1}(G,\mu(L))\cong \mu(L)/\mu(K)
\]
as $R_{T}$-modules.
\end{lema}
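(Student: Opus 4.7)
The plan is to exhibit an explicit surjective $R_T$-module homomorphism $\psi:\mu(L)\to B^{1}(G,\mu(L))$ and compute that its kernel equals $\mu(K)$; the First Isomorphism Theorem then yields the claim.

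First I would define $\psi(u)=f_{u}$, where $f_{u}(\sigma)=\sigma(u)-u$ as in equation (\ref{ec7_1}). The image lies in $B^{1}(G,\mu(L))$ essentially by the very definition of $B^{1}(G,\mu(L))$, and conversely every element of $B^{1}(G,\mu(L))$ is of the form $f_{u}$ for some $u\in\mu(L)$, so $\psi$ is surjective with no work required.

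Next I would verify that $\psi$ is an $R_T$-module homomorphism. Additivity is immediate: $f_{u+v}(\sigma)=\sigma(u+v)-(u+v)=f_{u}(\sigma)+f_{v}(\sigma)$. For the Carlitz--Hayes action, the module structure on $B^{1}(G,\mu(L))$ is given by $(N\cdot f)(\sigma):=(f(\sigma))^{N}$ for $N\in R_T$, and since $\sigma$ commutes with the Carlitz action one has
\[
(N\cdot f_{u})(\sigma)=(\sigma(u)-u)^{N}=\sigma(u)^{N}-u^{N}=\sigma(u^{N})-u^{N}=f_{u^{N}}(\sigma),
\]
so $\psi(u^{N})=N\cdot\psi(u)$.

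Finally I would identify the kernel: $u\in\ker\psi$ means $\sigma(u)=u$ for every $\sigma\in G=\Gal(L/K)$, so by Galois theory $u\in K$; combined with $u\in\mu(L)$ this gives $u\in\mu(L)\cap K=\mu(K)$. Conversely every element of $\mu(K)$ is fixed by $G$ and hence lies in $\ker\psi$. Applying the First Isomorphism Theorem yields the desired $R_T$-module isomorphism $\mu(L)/\mu(K)\cong B^{1}(G,\mu(L))$. No real obstacle is expected; the only subtle point is checking that the Carlitz action is preserved, which reduces to the fact that $\sigma\in\Gal(L/K)$ is $R_T$-linear for the Carlitz--Hayes module structure.
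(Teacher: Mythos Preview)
Your proposal is correct and follows essentially the same approach as the paper: both define $\psi:\mu(L)\to B^{1}(G,\mu(L))$ by $\psi(u)=f_{u}$, check that $\psi$ is an $R_{T}$-module homomorphism, note surjectivity from the definition of $B^{1}$, and identify $\ker\psi=\mu(K)$ via Galois theory. Your write-up is in fact more explicit than the paper's about the compatibility with the Carlitz action.
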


\begin{proof}
We define $ \psi: \mu(L) \rightarrow B^{1}(G, \mu (L))$ as follows:
$\psi(u) = f_{u} $, see Equation (\ref{ec7_1}). We have $\psi(u + v)
= \psi(u) + \psi(v)$ and $\psi(u^{M})=f_{u^{M}}$ for $u,v\in
\mu(L)$, $M\in R_{T}$. Therefore $\psi$ is a homomorphism of
$R_{T}$-modules and it is surjective by the definition of $B^{1}(G,
\mu(L))$. Since $L/K$ is Galois extension we have
$\ker(\psi)=\mu(K)$ and the result follows.
\end{proof}

\begin{proposicion}\label{acotacion1}
Let $L / K$ be a finite radical cyclotomic Galois extension. If $\mu(L) =
\mu(K)$, then
\[
\mid\cogalois(L/K)\mid=q^{m\grado(\mu(L))}
\]
where $[L:K]=p^{m}$.
\end{proposicion}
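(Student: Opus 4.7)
The plan is to combine the three tools that have been set up immediately before the statement: Theorem \ref{finitud_TC/L} (identifying $\cogalois$ with $Z^{1}$), Lemma \ref{auxiliar_coho_cogalois} (computing $B^{1}$), and Remark \ref{observacion_elemental_abeliano} (structure of $G$).

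First I would apply Theorem \ref{finitud_TC/L} to get $\cogalois(L/K)\cong Z^{1}(G,\mu(L))$, where $G=\Gal(L/K)$. Using the hypothesis $\mu(L)=\mu(K)$, Lemma \ref{auxiliar_coho_cogalois} gives $B^{1}(G,\mu(L))\cong\mu(L)/\mu(K)=0$, so $Z^{1}(G,\mu(L))=H^{1}(G,\mu(L))$. Moreover, since $\mu(L)=\mu(K)\subseteq K$ is pointwise fixed by $G$, the $G$-action on $\mu(L)$ is trivial, and the cocycle condition collapses to $f(\sigma\tau)=f(\sigma)+f(\tau)$, that is, $Z^{1}(G,\mu(L))=\operatorname{Hom}(G,\mu(L))$ in the category of abelian groups.

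Next I would invoke Remark \ref{observacion_elemental_abeliano} (whose hypotheses are exactly the present ones) to conclude that $G$ is a $p$-elementary abelian group; combined with $[L:K]=p^{m}$ this yields $G\cong(\mathbb{Z}/p\mathbb{Z})^{m}$. On the other side, by Lemma \ref{auxiliar_raicescarlitz} we may write $\mu(L)=\Lambda_{M}$ with $\grado(M)=\grado(\mu(L))$; since $\Lambda_{M}$ is the kernel of the additive $\mathbb{F}_{q}$-linear polynomial associated with $M$, it is an $\mathbb{F}_{q}$-vector space of dimension $\grado(M)$, hence as an abelian group
\[
\mu(L)\cong(\mathbb{Z}/p\mathbb{Z})^{\nu\,\grado(\mu(L))},\qquad |\mu(L)|=q^{\grado(\mu(L))}.
\]
In particular $\mu(L)$ is annihilated by $p$, so $\mu(L)[p]=\mu(L)$.

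Finally I would assemble the count: since $G\cong(\mathbb{Z}/p\mathbb{Z})^{m}$, standard adjunction gives
\[
\operatorname{Hom}(G,\mu(L))\cong\operatorname{Hom}\bigl((\mathbb{Z}/p\mathbb{Z})^{m},\mu(L)\bigr)\cong\mu(L)[p]^{m}=\mu(L)^{m},
\]
whose cardinality is $q^{m\,\grado(\mu(L))}$. Chaining the isomorphisms of the first two paragraphs, $|\cogalois(L/K)|=q^{m\,\grado(\mu(L))}$, as desired. There is no real obstacle here; the only point requiring a line of justification is that $\Lambda_{M}$ is $p$-torsion as an abelian group (via its $\mathbb{F}_{q}$-vector space structure), which is what allows the Hom count to be performed degree-by-degree.
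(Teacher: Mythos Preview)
Your argument is correct and follows essentially the same route as the paper: identify $\cogalois(L/K)$ with $Z^{1}(G,\mu(L))$ via Theorem \ref{finitud_TC/L}, use $\mu(L)=\mu(K)$ to reduce to $\operatorname{Hom}(G,\mu(L))$ with trivial action, invoke Remark \ref{observacion_elemental_abeliano} to get $G\cong C_{p}^{m}$, and count homomorphisms. The only cosmetic difference is that the paper phrases the final count as the cardinality of the matrix space ${\mathfrak{M}}_{m\times \nu\grado(\mu(L))}({\mathbb{F}}_{p})$ rather than $\mu(L)^{m}$, which is the same computation.
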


\begin{proof}
From Remark \ref{observacion_elemental_abeliano} we have $\Gal(L /
K) \cong C^{m}_{p}$, for some $m \in {\mathbb{N}}$. Since $B^{1}(G,
\mu(L)) = \{0 \}$ and $H^{1}(G, \mu(L)) \cong {\rm Hom}(G, \mu(L))$,
from Proposition \ref{finitud_TC/L}, we obtain that
\[
\cogalois(L/K)\cong Z^{1}(G,\mu(L))/B^{1}(G,\mu(L))=
 H^{1}(G,\mu(L))\cong {\rm Hom}(G,\mu(L)).
\]

Furthermore $\mu(L) \cong C^{\nu\grado(\mu(L))}_{p}$.

On the other hand, if we denote by
${\mathfrak{L}}_{p}({\mathbb{F}}^{m}_{p},{\mathbb{F}}^{\nu\grado(\mu(L))}_{p})$
the set of linear transformations from ${\mathbb{F}}^{m}_{p}$ to
${\mathbb{F}}^{\nu\grado(\mu(L))}_{p}$ over ${\mathbb{F}}_{p}$ and
${\mathfrak{M}}_{m\times \nu{\rm deg}(\mu(L))}({\mathbb{F}}_{p})$
denotes the set matrices $m\times \nu{\rm deg}(\mu(L))$ with
coefficients in ${\mathbb{F}}_{p}$, then
\begin{align*}
{\rm Hom}(G,\mu(L)) &= {\rm Hom}(C^{m}_{p},C^{\nu{\rm deg}(\mu(L))}_{p}) 
={\mathfrak{L}}_{p}({\mathbb{F}}^{m}_{p},{\mathbb{F}}^{\nu{\rm deg}(\mu(L))}_{p})\\
&={\mathfrak{M}}_{m\times \nu{\rm deg}(\mu(L))}({\mathbb{F}}_{p})
\end{align*}

Thus $\mid {\rm Hom}(G,\mu(L))\mid= q^{m\grado(\mu(L))}$.
\end{proof}

\begin{ejemplo}\label{ejemplo_schultheis_12}
{\rm{Let $L$ be the splitting field of $f(X)=X^{T}-1 \in
k(\Lambda_{T})[X]$. From Example \ref{ejemplo_schultheis} we have
$\mid\cogalois(L/k(\Lambda_{T}))\mid=[L:k(\Lambda_{T})]=q=q^{m\grado(\mu(L))}$
as established in Proposition \ref{acotacion1}.\hfill $\square$}}
\end{ejemplo}

\begin{proposicion}\label{acotacion2}
Let $L / K$ be a finite radical cyclotomic Galois extension and assume that
$L = K(\mu (L))$. Then $\mid \cogalois(L/K) \mid\leq q^{m
\grado(\mu(L))}$, for some $m\in {\mathbb{N}}$.
\end{proposicion}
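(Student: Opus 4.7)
The plan is to induct on $m$ with $[L:K]=p^{m}$, which is the correct form by Theorem \ref{tdim_p}. In the base case $m\leq 1$ one has either $L=K$ (so $\cogalois(L/K)=0$ and the bound is trivial), or $G=\Gal(L/K)$ is cyclic of order $p$. In the latter situation Theorem \ref{finitud_TC/L} identifies $\cogalois(L/K)$ with $Z^{1}(G,\mu(L))$, and a $1$-cocycle on a cyclic group $\langle\sigma\rangle$ of order $p$ is determined by its value at $\sigma$, which must lie in the kernel of the norm endomorphism of $\mu(L)$. Hence $|Z^{1}(G,\mu(L))|\leq|\mu(L)|=q^{\grado(\mu(L))}$, giving the bound for $m=1$.

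For $m\geq 2$, since $G$ is a nontrivial $p$-group there exists a normal subgroup $H\lhd G$ of index $p$. Set $E=L^{H}$, so that $E/K$ is Galois of degree $p$ and $L/E$ is Galois of degree $p^{m-1}$. By Proposition \ref{prop_cog}(2) the extension $L/E$ is again radical cyclotomic, and from $L=K(\mu(L))\subseteq E(\mu(L))\subseteq L$ one obtains $L=E(\mu(L))$. Thus $L/E$ satisfies all hypotheses of the proposition with exponent $m-1$, and the inductive hypothesis yields
\[
|\cogalois(L/E)|\leq q^{(m-1)\grado(\mu(L))}.
\]

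To glue the two steps I apply the exact sequence of Proposition \ref{prop_cog}(1) to the tower $K\subseteq E\subseteq L$:
\[
0\to \cogalois(E/K)\to \cogalois(L/K)\to \cogalois(L/E),
\]
which gives $|\cogalois(L/K)|\leq |\cogalois(E/K)|\cdot|\cogalois(L/E)|$. For the first factor, $\Gal(E/K)\cong G/H$ is cyclic of order $p$, so by Theorem \ref{finitud_TC/L} together with the same cyclic-group argument used in the base case,
\[
|\cogalois(E/K)|=|Z^{1}(G/H,\mu(E))|\leq|\mu(E)|\leq|\mu(L)|=q^{\grado(\mu(L))}.
\]
Multiplying the two estimates yields $|\cogalois(L/K)|\leq q^{m\grado(\mu(L))}$, as required.

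The only delicate point is the transfer of the standing hypotheses to the intermediate extension $L/E$: the radical cyclotomic property is exactly Proposition \ref{prop_cog}(2), while the key identity $L=E(\mu(L))$ is immediate from $L=K(\mu(L))$ and $K\subseteq E$. Once these are in place, the remainder is the elementary bound $|Z^{1}(C_{p},M)|\leq|M|$ for a finite module $M$ over a cyclic group of prime order, combined with the exact sequence of Proposition \ref{prop_cog}(1). Notice also that, unlike in Proposition \ref{acotacion1}, the hypothesis $\mu(K)=\mu(L)$ is not assumed, so we only obtain an inequality rather than equality; the slack comes from the potential nontriviality of $H^{1}(G,\mu(L))$ beyond the $\mu(L)/\mu(K)$ contribution of $B^{1}(G,\mu(L))$.
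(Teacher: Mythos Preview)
Your proof is correct and follows the same inductive skeleton as the paper (induction on $m$, intermediate field $E=L^{H}$ with $[E:K]=p$, exact sequence of Proposition~\ref{prop_cog}(1), and the observation $L=E(\mu(L))$), but you take a genuinely shorter route at the two places where a degree-$p$ extension is analysed.

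The paper, in its base case $[L:K]=p$, carries out an explicit computation of $H^{1}(G,\mu(L))$: it writes $\mu(L)=\Lambda_{M}$, $\mu(K)=\Lambda_{N}$, determines $\ker N_{G}$ and $I_{G}(\mu(L))$ in terms of the exponents of the irreducible factors of $M$ and $N$, and only then bounds $|H^{1}|\leq q^{\deg N}$. In the inductive step it likewise distinguishes the cases $\mu(E)=\mu(K)$ (invoking Proposition~\ref{acotacion1}) and $\mu(E)\neq\mu(K)$ (invoking the base case), after first arguing that $E/K$ may be assumed radical cyclotomic. You bypass all of this with the single elementary observation that for a cyclic group $C_{p}=\langle\sigma\rangle$ acting on a finite module $M$, a cocycle is determined by its value at $\sigma$, so $|Z^{1}(C_{p},M)|\leq |M|$; applied with $M=\mu(L)$ or $M=\mu(E)$ this immediately gives both the base case and the bound on $|\cogalois(E/K)|$, with no case analysis and no need to check that $E/K$ is itself radical cyclotomic. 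The trade-off is that the paper's finer computation yields, as a by-product, the exact formula for $|\cogalois(L/K)|$ in the degree-$p$ case stated in the Corollary following Proposition~\ref{acotacion2}; your argument gives only the inequality. Note also that your argument never really uses the hypothesis $L=K(\mu(L))$ except to propagate it to $L/E$; the same reasoning therefore proves Proposition~\ref{acotacion3} directly.
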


\begin{proof}
From Corollary \ref{cogalois_grado_p_pureza} we have $[L: K] = p^{m}
$ for some $m \in {\mathbb {N}}$, where $p=\caracteristica(K)$. The
proof is by induction on $ m $. Let $L / K$ be a finite radical cyclotomic
Galois extension, such that $L = K(\mu(L))$ and $[L: K] = p$. Then
$L / K$ is a cyclic extension of degree $p$. Let $M =
P^{\alpha_{1}}_{1} \cdots P^{\alpha_{r}}_{r} $ and $N =
P^{\beta_{1}}_{1} \cdots P^{\beta_{r}}_{r} $, with $1 \leq \beta_{i}
\leq \alpha_{i} $, $i = 1, \ldots, r$, such that
$\mu(L)=\Lambda_{M}$ and $\mu(K)=\Lambda_{N}$.

Let $G :=\Gal(L/K)= (\sigma)$. Since the action of Carlitz-Hayes commutes with
$\sigma$, we have $\sigma(\lambda_{M})=\lambda^{A}_{M}$ for some
$A\in R_{T}$. Since $\lambda_{M}\notin K$, $\sigma(\lambda_{M})\neq
\lambda_{M}$. Therefore $M \nmid (A-1)$. Let $M = ND$, so
$\lambda_{N} = \lambda^{D}_{M}$. We have
\[
\lambda_{N}=\sigma(\lambda_{N})=\sigma(\lambda^{D}_{M})
=(\sigma(\lambda_{M}))^{D}=\lambda^{AD}_{M}=\lambda^{A}_{N}.
\]
Hence $\lambda^{A-1}_{N} = 0 $ and $N \mid (A-1)$.

On the other hand we obtain
\begin{align*}
{\rm Tr}_{G}(\lambda_{M}) &=
\lambda_{M}+\lambda^{A}_{M}+\lambda^{A^{2}}_{M}+\cdots
+\lambda^{A^{p-1}}_{M}\\
&=\lambda^{1+A+A^{2}+\cdots +
A^{p-1}}_{M}=\lambda^{\frac{A^{p}-1}{A-1}}_{M}
=\lambda^{(A-1)^{p-1}}_{M},
\end{align*}
since $\frac{A^{p}-1}{A-1}=\frac{(A-1)^{p}}{A-1}$. 

Therefore ${\rm Tr}_{G}(\lambda_{M})\in K\cap
\Lambda_{M}=\Lambda_{N}$. Hence there exists $C \in R_{T}$ such that
$\lambda^{(A-1)^{p-1}}_{M} = \lambda^{C}_{N}$. Since $\sigma^{p} = 1
$ we obtain $\sigma^{p}(\lambda_{M}) = \lambda^{A ^ {p}}_{M} =
\lambda_{M} $, that is, $\lambda^{A^{p} -1}_{M} = 0$. Since $A^{p}
-1 = (A-1)^{p} $, we have that $M \mid (A-1)^{p} $.

We can write $A-1 = P^{\gamma_{1}}_{1} \cdots P^{\gamma_{r}}_{r}Q $
with $(Q,P_{1}\cdots P_{r})=1$. If $\beta_{i} < \alpha_{i} $ we have
$\lambda_{NP_{i}}\in L \setminus K$, furthermore 
$\sigma(\lambda_{NP_{i}})=\lambda^{A}_{NP_{i}}\neq
\lambda_{NP_{i}}$.
Thus $NP_{i}\nmid (A-1)$.

It follows that:

(i) $\gamma_{i_{0}} < \alpha_{i_{0}} $ for some $i_{0} \in \{1,
\ldots, r \}$ because $M\nmid (A-1)$

(ii) Since $N \mid (A-1)$ we have $\beta_{i} \leq \gamma_{i}$. 
Furthermore, if $\beta_i<\alpha_i$, since $NP_{i}\nmid (A-1)$ it follows that
$\beta_{i} = \gamma_{i}$.

(iii) Since $\lambda^{(A-1)^{p-1}}_{M}=\lambda^{C}_{N}$ we obtain
$\alpha_{i} - (p-1) \gamma_{i} \leq \beta_{i}$ for $1 \leq i \leq
r$.

(iv) Since $M\mid (A-1)^{p}$ we have $\alpha_{i}\leq p\gamma_{i}$ for
$1\leq i\leq r$.

Now ${\rm Tr}_{G}(\lambda^{B}_{M})=({\rm
Tr}(\lambda_{M}))^{B}=\lambda^{B(A-1)^{p-1}}_{M}$
for any $B\in R_T$.

Let $B = P^{\delta_{1}}_{1} \cdots P^{\delta_{r}}_{r}R$ with $(R,
P_{1} \cdots P_{r}) = 1$. Then
\begin{align*}
\lambda^{B}_{M}\in \ker\,{\rm Tr}_{G} &\Leftrightarrow \delta_{i}+(p-1)\gamma_{i}\geq\alpha_{i}\, \text{for each $i$}\\
&\Leftrightarrow \delta_{i}\geq 0 \,\text{and}\, \delta_{i}+(p-1)\gamma_{i}\geq\alpha_{i}\, \text{for each $i$}\\
&\Leftrightarrow \delta_{i}\geq{\rm
max}\{0,\alpha_{i}-(p-1)\gamma_{i}\} \text{ for each $i$.}
\end{align*}

Therefore $\ker\,{\rm Tr}_{G} = (\lambda^{B}_{M})$ with
$B=P^{\delta_{1}}_{1}\cdots P^{\delta_{r}}_{r}$ and $\delta_{i}={\rm
max}\{0,\alpha_{i}-(p-1)\gamma_{i}\}\leq \alpha_i$ for $1\leq i\leq r$. Thus
$(\lambda^{B}_{M})=(\lambda_{M^{\prime}})$, with
$M^{\prime}=P^{\mu_{1}}_{1}\cdots P^{\mu_{r}}_{r}$ where $\mu_{i}=
\alpha_{i}-\delta_{i}$ for $1\leq i\leq r$.

Furthermore $I_{G}(\lambda_{M}) = ((\sigma-1) \lambda_{M}) =
(\lambda^{A-1}_{M}) $, where $I_{G}: \mu(L) \rightarrow \mu(L)$ is
the homomorphism defined by $I_{G}(u) = \sigma(u)-u $. On the other
hand $I_{G} (\lambda_{M}) = (\lambda_{M ^{\prime \prime}})$, with
$M^{\prime \prime} = P^{\varphi_{1}}_{1} \cdots
P^{\varphi_{r}}_{r}$, where $\varphi_{i} = {\rm max} \{\alpha_{i} -
\gamma_{i}, 0 \}$, $1 \leq i \leq r$.

From (ii) we obtain that $\varphi_{i} = \alpha_{i} - \beta_{i} $ if
$\beta_{i} < \alpha_{i} $. If $\alpha_{i}=\beta_{i}$ from (ii)
follows that $\alpha_{i} - \gamma_{i} \leq 0$. Therefore
$\varphi_{i} = \alpha_{i} - \beta_{i}$. Thus

\begin{gather*}
\mid
H^{1}(G,\mu(L))\mid = \frac{\mid
(\lambda_{M^{\prime}})\mid}{\mid(\lambda_{M^{\prime\prime}})\mid}=\mid(\lambda_{M^{\prime\prime\prime}})\mid\\
\intertext{with $M^{\prime\prime\prime}=P^{\varepsilon_{1}}_{1}\cdots
P^{\varepsilon_{r}}_{r}$ where}
\varepsilon_{i}=\mu_{i}-\varphi_{i}=\alpha_{i}-\delta_{i}-
(\alpha_{i}-\beta_{i})=\beta_i-\delta_i,\quad 1\leq i\leq r.
\end{gather*}

Obviously, $\varepsilon_{i} \leq \beta_{i}$.
Therefore
\[
\mid H^{1}(G,\mu(L))\mid=q^{{\rm deg}M^{\prime\prime\prime}}\leq q^{{\rm deg}N}.
\]
Combining the above inequality and
 Lemma \ref{auxiliar_coho_cogalois},we obtain
\begin{align*}
\mid\cogalois(L/K)\mid &= \mid H^{1}(G,\mu(L))\mid\mid
B^{1}(G,\mu(L))\mid = \mid H^{1}(G,\mu(L))\mid\frac{\mid \mu(L)
\mid}{\mid
\mu(K)\mid}\\
&=q^{{\rm deg}M^{\prime\prime\prime}}q^{{\rm deg}M-{\rm deg}N} \leq
q^{{\rm deg}M}= q^{{\rm deg}(\mu(L))}.
\end{align*}

Now we suppose that $[L:K]=p^m$ with $m\geq 2$. Let
$H$ be a subgroup of $G$ of order $p^{m-1}$. Let $E = L^{H}$. Then
$K \subseteq E \subseteq L$. We have $[E: K] = p$, $[L: E] = p^{m-1}
$ and $L = E(\mu (L))$.

If $E / K$ were not a radical cyclotomic extension, we would have
$\cogalois(E / K) = \{0 \}$, since otherwise there would exist a
nonzero element $\overline{\alpha} \in \cogalois(E / K)$. In
particular $\alpha \notin K$. It follows that $E = K (\alpha)$, but
this implies that $E / K$ is a radical cyclotomic extension, which
is absurd.

Thus $E / K$ is a radical extension cyclotomic and we consider two
cases:

(i) $\mu(E)\neq\mu(K)$ and

(ii) $\mu(E)=\mu(K).$

In the first case, from what we proved in the case $[E:K]=p$ we
obtain
\[
\mid\cogalois(E/K)\mid\leq q^{\grado(\mu(E))}.
\]

In case (ii), from Proposition \ref{acotacion1} we have $\mid
\cogalois(E / K) \mid = q^{\grado(\mu(E))}.$ Thus, in any case
\[
\mid\cogalois(E/K)\mid\leq q^{\grado(\mu(E))}\leq q^{\grado(\mu(L))}.
\]

Since $L = E(\mu(L))$ and $[L: E] = p^{m-1}$, by the induction
hypothesis we have $\mid \cogalois(L / E) \mid \leq q^{(m-1)
\grado(\mu(L))}$. Therefore from the exact sequence of
$R_{T}$-modules
\[
0\rightarrow {\rm cog}(E/K)\rightarrow {\rm cog}(L/K)\rightarrow{\rm cog}(L/E)
\]
it follows that $\mid \cogalois(L/K)\mid\leq\mid
\cogalois(E/K)\mid\mid \cogalois(L/E)\mid\leq q^{m\grado(\mu(L))}$.
\end{proof}

From the proof of Proposition \ref{acotacion2} for the case $m = 1$,
we have the following corollary.

\begin{corolario}
Let $L / K$ be a cyclic extension of degree $p$ with $L = K(\alpha)$
such that $\alpha \in \cogalois(L/K)$. Then $L/K$ is a radical
cyclotomic extension with $\mid\cogalois(L/K)\mid=p^{\nu t}$, where
$q=p^{\nu}$ and
\[
t = \left\lbrace
           \begin{array}{c l}
 \grado(\mu(L))-\grado(\mu(K))+\grado(\frac{B-1}{C}) & \text{if $\mu(L)\neq\mu(K)$},\\
              \grado(\mu(L)) & \text{if $\mu(L)=\mu(K)$},
           \end{array}
         \right.
\]
where $\sigma(\lambda_{M})=\lambda^{A}_{M}$, $B-1:=
\gcd(A-1,M)$ and $C$ is
the polynomial of minimum degree such that $C \mid (B-1)$ and $M
\mid C (B-1)^{p-1}$. \hfill $\square$
\end{corolario}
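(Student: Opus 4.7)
My plan is to extract the corollary directly from the proof of Proposition \ref{acotacion2} in the case $m = 1$, after first checking that the hypotheses of that proposition are in force.

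To begin, I would verify that $L/K$ is radical cyclotomic Galois. Cyclicity (in the standard sense) gives Galois and separable; since $\alpha\in\cogalois(L/K)$ has a lift in $T(L/K)$, the equality $L = K(\alpha)$ exhibits $L/K$ as radical; and purity follows from Lemma \ref{pureza_p}, since $[L:K] = p$. In the $m = 1$ step of the proof of Proposition \ref{acotacion2} the hypothesis $L = K(\mu(L))$ is never actually invoked (only the assumption that $L/K$ is a cyclic radical cyclotomic extension of degree $p$ is used), so that computation applies in our setting as well.

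The second step is notational matching. Let $\mu(L) = \Lambda_M$ and $\mu(K) = \Lambda_N$ via Lemma \ref{auxiliar_raicescarlitz}, factor $M = P_1^{\alpha_1}\cdots P_r^{\alpha_r}$ and $N = P_1^{\beta_1}\cdots P_r^{\beta_r}$, and write $A - 1 = P_1^{\gamma_1}\cdots P_r^{\gamma_r}Q$ with $(Q,P_1\cdots P_r)=1$. From point (ii) in the proof of Proposition \ref{acotacion2}, $\beta_i \leq \gamma_i$ always, and $\beta_i = \gamma_i$ whenever $\beta_i < \alpha_i$; hence $\min(\gamma_i,\alpha_i) = \beta_i$, and therefore $B - 1 = \gcd(A-1,M) = P_1^{\beta_1}\cdots P_r^{\beta_r} = N$. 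Writing $C = P_1^{c_1}\cdots P_r^{c_r}$, the conditions $C\mid(B-1)$ and $M\mid C(B-1)^{p-1}$ amount to $c_i\leq\beta_i$ and $c_i\geq\alpha_i-(p-1)\beta_i$; minimality then forces $c_i = \max\{0,\alpha_i-(p-1)\beta_i\}$, which is exactly the exponent $\delta_i$ appearing in the proof of Proposition \ref{acotacion2}.

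With these identifications, $\grado((B-1)/C) = \sum_i(\beta_i-\delta_i)\grado(P_i) = \grado(M''')$. When $\mu(L)\neq\mu(K)$, the proof of Proposition \ref{acotacion2} delivers
\[
|\cogalois(L/K)| = |H^{1}(G,\mu(L))|\cdot|B^{1}(G,\mu(L))| = q^{\grado(M''')}\cdot q^{\grado(M)-\grado(N)},
\]
which is $p^{\nu t}$ with $t = \grado(\mu(L)) - \grado(\mu(K)) + \grado((B-1)/C)$. When $\mu(L)=\mu(K)$, Proposition \ref{acotacion1} applies directly to give $|\cogalois(L/K)| = q^{\grado(\mu(L))} = p^{\nu t}$ with $t = \grado(\mu(L))$. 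The main obstacle is really the combinatorial bookkeeping identifying $C$ with $P_1^{\delta_1}\cdots P_r^{\delta_r}$; once the identities $\min(\gamma_i,\alpha_i) = \beta_i$ and $c_i = \delta_i$ are in place, the corollary is merely a transcription of the $m=1$ computation already inside Proposition \ref{acotacion2}.
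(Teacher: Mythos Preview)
Your proposal is correct and follows exactly the route the paper intends: the paper's own proof is simply the sentence ``From the proof of Proposition \ref{acotacion2} for the case $m=1$, we have the following corollary,'' and you have carried out precisely that extraction, supplying the bookkeeping (the identifications $B-1=N$ and $c_i=\delta_i$) that the paper leaves implicit. Your observation that the hypothesis $L=K(\mu(L))$ is only used in the $m=1$ step to guarantee $\mu(L)\neq\mu(K)$, and that the case $\mu(L)=\mu(K)$ is handled by Proposition \ref{acotacion1}, is exactly right.
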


\begin{proposicion}\label{acotacion3}
Let $L/K$ be a finite radical cyclotomic Galois extension. Then
\[
\mid\cogalois(L/K)\mid\leq q^{m\grado(\mu(L))}
\]
where $[L:K]=p^{m}$.
\end{proposicion}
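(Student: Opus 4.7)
The plan is to reduce Proposition \ref{acotacion3} to the case already treated in Proposition \ref{acotacion2} by inserting the intermediate field $E = K(\mu(L))$ between $K$ and $L$, and then concatenating the bounds via the exact cogalois sequence.

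First I would verify that $E/K$ is itself a finite radical cyclotomic Galois extension. Every element of $\mu(L)$ lies in $T(E/K)$ (for $u\in \mu(L)$ one has $u^{N}=0\in K$ for some $N\in R_{T}$), so $E=K(\mu(L))$ exhibits $E/K$ as radical; separability passes to any intermediate extension; and purity passes to $E/K$ by Lemma \ref{pureza}. The Galois property of $E/K$ follows from the fact that the Carlitz--Hayes action commutes with every element of $\Gal(L/K)$, so $\mu(L)$ is $\Gal(L/K)$-stable and $E$ is a normal subextension of $L/K$.

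Next I would observe that $\mu(E)=\mu(L)$: the inclusion $E\subseteq L$ gives $\mu(E)\subseteq \mu(L)$, while $\mu(L)\subseteq E$ gives the reverse inclusion. Hence $E=K(\mu(E))$, so Proposition \ref{acotacion2} applies to $E/K$ and yields $|\cogalois(E/K)|\leq q^{m'\grado(\mu(L))}$ with $[E:K]=p^{m'}$. For the top layer $L/E$, Proposition \ref{prop_cog}(2) gives that $L/E$ is radical cyclotomic, it is Galois as a subextension of $L/K$, and the equality $\mu(L)=\mu(E)$ is exactly the hypothesis of Proposition \ref{acotacion1}, producing $|\cogalois(L/E)|=q^{s\grado(\mu(L))}$ with $[L:E]=p^{s}$.

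Finally, applying the exact sequence of $R_{T}$-modules from Proposition \ref{prop_cog}(1) to the tower $K\subseteq E\subseteq L$ gives
\[
|\cogalois(L/K)|\leq |\cogalois(E/K)|\cdot |\cogalois(L/E)|\leq q^{(m'+s)\grado(\mu(L))}=q^{m\grado(\mu(L))},
\]
where in the last equality I use $p^{m}=[L:K]=[L:E][E:K]=p^{m'+s}$. I expect the main technical point to be checking that the intermediate field $E=K(\mu(L))$ genuinely inherits the radical cyclotomic and Galois properties from $L/K$; once that is in hand, the bound is a direct assembly of Propositions \ref{acotacion1} and \ref{acotacion2}.
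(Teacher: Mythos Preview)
Your proposal is correct and follows essentially the same approach as the paper: both insert the intermediate field $E=K(\mu(L))$, apply Proposition~\ref{acotacion2} to $E/K$ (since $E=K(\mu(E))$) and Proposition~\ref{acotacion1} to $L/E$ (since $\mu(L)=\mu(E)$), and combine the bounds via the exact sequence of Proposition~\ref{prop_cog}(1). The paper states this in two lines, while you spell out the verifications that $E/K$ inherits the radical cyclotomic Galois properties and that $\mu(E)=\mu(L)$, but the argument is the same.
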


\begin{proof}
Let $E=K(\mu(L))$ with $K\subseteq E\subseteq L$. Then
\[
\mid \cogalois(L/K)\mid\leq\mid \cogalois(E/K)\mid\mid
 \cogalois(L/E)\mid\leq q^{m{\rm deg}(\mu(L))}
\]
by Propositions \ref{acotacion1} and \ref{acotacion2}.
\end{proof}

The following example shows that the inequality in Proposition
\ref{acotacion3} may be strict.

\begin{ejemplo} \label{ejemplo_no_se_alcanza_cota}
{\rm{Let $L=k(\Lambda_{P^{2p-1}})$, with $P\in R_{T}$ irreducible,
and $\sigma=1+P^{2}\in \Gal(k(\Lambda_{P^{2p-1}})/k)$. We have
\[
\sigma(\lambda_{P^{2p-1}})=\lambda_{P^{2p-1}}+\lambda_{P^{2p-3}}\neq 
\lambda_{P^{2p-1}}.
\]
Thus $\sigma\neq 1$.

On the other hand $\sigma^{p}= (1+P^{2})^{p}=1+P^{2p}$. Hence
\[
\sigma^{p}(\lambda_{P^{2p-1}})=
\lambda_{P^{2p-1}}+\lambda^{P^{2p}}_{P^{2p-1}}
=\lambda_{P^{2p-1}}.
\]
Therefore $\sigma^{p}=1$, and the order of $\sigma$ is $p$.

Let $E = L^{(\sigma)}$. Then $[L:E]=p$ and $L/E$ is a radical
cyclotomic extension. We have $\sigma(\lambda^{M}_{P^{2p-1}})=
\lambda^{M}_{P^{2p-1}}+\lambda^{M}_{P^{2p-3}}=\lambda^M_{P^{2p-1}}$ if
and only if the exponent which appears in $P$ in the decomposition
of $M$ as product of irreducible polynomials is greater than or
equal to $2p-3$. In this case $\lambda^{P^{2p-3}}_{P^{2p-1}}=
\lambda_{P^{2}}\in E$. Note that $\lambda_{P^{3}}\notin E$.
Therefore $\mu(E)=\Lambda_{P^{2}}$. Further $\mu(L)=
\Lambda_{P^{2p-1}}$.

Let $N_{\mu(L)}$ be the trace map from $L$ to $E$, that is
$N_{\mu(L)}:=\sum_{i=0}^{p-1}\sigma^i$. We have
$N_{\mu(L)}(\lambda^{M}_{P^{2p-1}})=
\lambda^{M(\frac{(1+P^{2})^{p}-1 }{(1+P^{2})-1 } )
}_{P^{2p-1}}=\lambda^{MP^{2p-2}}_{P^{2p-1}}=\lambda^{M}_{P}=0$ if
and only if $P$ divides $M$. Therefore ${\rm Ker}\,
N_{\mu(L)}=(\lambda^{P}_{P^{2p-1}})=\Lambda_{P^{2p-2}}$.

Let $G:=\Gal(L/E)$ and $I_G=\langle \sigma\rangle$. Then
$I_{G}(\mu(L))=(\sigma(\lambda_{P^{2p-1}})-\lambda_{P^{2p-1}})=
(\lambda_{P^{2p-3}})=\Lambda_{P^{2p-3}}$. Therefore
\begin{gather*}
\mid{\rm cog}(L/E)\mid=\mid H^{1}(G,\mu(L))\mid\frac{\mid 
\mu(L) \mid }{\mid \mu(E)  \mid}=\frac{\mid
\Lambda_{ P^{2p-2}} \mid }{\mid\Lambda_{P^{2p-3}} 
 \mid}\frac{\mid\Lambda_{P^{2p-1}}  \mid}{\mid\Lambda_{P^{2}}  
  \mid}= q^{d(2p-2)}\\
\intertext{where $d=\grado(P)$. Since $m=1$, we have}
\mid\cogalois(L/E)\mid= q^{d(2p-2)}<
q^{d(2p-1)}=q^{m\grado(\mu(L))}.
\end{gather*}
}}
\end{ejemplo}

\begin{teorema}\label{cota_superior_cog}
Let $L / K$ be a finite radical cyclotomic extension. Then if
$\widetilde{L}$ is the Galois closure of $L$, we have
\[
\mid{\rm cog}(L/K)\mid\leq q^{m\grado(\mu(\widetilde{L}))}
\]
where $[\widetilde{L}:K]= p^{m}$.
\end{teorema}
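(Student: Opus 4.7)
The plan is to reduce the bound to the Galois case handled in Proposition \ref{acotacion3} by passing to the Galois closure $\widetilde{L}$ of $L/K$, and then to transfer the estimate back to $L/K$ via the cogalois exact sequence of Proposition \ref{prop_cog}(1).

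First I would verify that $\widetilde{L}/K$ is itself a finite radical cyclotomic Galois extension. By construction it is Galois, and it is separable because $L/K$ is separable and the Galois closure of a separable extension is separable. For radicalness, write $L=K(\alpha_{1},\ldots,\alpha_{t})$ with each $\alpha_{i}\in T(L/K)$ and $\alpha_{i}^{M_{i}}=a_{i}\in K$. Every $K$-conjugate of $\alpha_{i}$ is of the form $\alpha_{i}+\lambda$ with $\lambda\in\Lambda_{M_{i}}$, hence satisfies $(\alpha_{i}+\lambda)^{M_{i}}=\alpha_{i}^{M_{i}}=a_{i}\in K$ and therefore lies in $T(\widetilde{L}/K)$. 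Since $\widetilde{L}$ is generated over $K$ by these conjugates, $\widetilde{L}/K$ is radical. To finish, by Corollary \ref{cogalois_grado_p_pureza} it suffices to check that $[\widetilde{L}:K]=p^{m}$ for some $m$; the natural route is to use Proposition \ref{prop_cog}(3) on the tower $K\subseteq L\subseteq \widetilde{L}$, checking separately that $\widetilde{L}/L$ is radical cyclotomic (separable and radical being clear, and purity of $\widetilde{L}/L$ reduces by Lemma \ref{pureza}, together with purity of $L/K$, to purity of $\widetilde{L}/K$).

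Second, having established $\widetilde{L}/K$ as a finite radical cyclotomic Galois extension with $[\widetilde{L}:K]=p^{m}$, Proposition \ref{acotacion3} immediately yields
\[
|\cogalois(\widetilde{L}/K)|\leq q^{m\grado(\mu(\widetilde{L}))}.
\]
On the other hand, applying Proposition \ref{prop_cog}(1) to the tower $K\subseteq L\subseteq \widetilde{L}$ gives the exact sequence of $R_{T}$-modules
\[
0\to \cogalois(L/K)\to \cogalois(\widetilde{L}/K)\to \cogalois(\widetilde{L}/L),
\]
from which $|\cogalois(L/K)|\leq |\cogalois(\widetilde{L}/K)|$. Chaining these two inequalities produces the desired bound.

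The main obstacle I expect is verifying that the Galois closure $\widetilde{L}/K$ is radical cyclotomic, or equivalently that $[\widetilde{L}:K]$ is a $p$-power. Radicalness and separability of $\widetilde{L}/K$ transfer easily from $L/K$, but the purity condition is delicate: forming the Galois closure requires adjoining to $L$ the Carlitz $M_{i}$-roots $\lambda$ by which conjugates differ, and one must argue that these do not create new Carlitz roots of irreducible polynomials outside $K$. The cleanest strategy is to show that $\widetilde{L}/L$ is radical cyclotomic and invoke Proposition \ref{prop_cog}(3); once this is in place, everything else in the proof is formal.
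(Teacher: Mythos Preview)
Your reduction to Proposition~\ref{acotacion3} is the right endgame, but the route you propose for the key step---showing that $\widetilde{L}/K$ is radical cyclotomic---is circular. You suggest invoking Proposition~\ref{prop_cog}(3) on the tower $K\subseteq L\subseteq\widetilde{L}$, which requires $\widetilde{L}/L$ to be radical cyclotomic; you then reduce purity of $\widetilde{L}/L$ via Lemma~\ref{pureza} to purity of $\widetilde{L}/K$. But by the same lemma (and since $L/K$ is pure), purity of $\widetilde{L}/L$ and purity of $\widetilde{L}/K$ are \emph{equivalent}, so nothing has been gained: you are assuming exactly what you set out to prove. You correctly flag this as the main obstacle, but the proposal does not actually overcome it.

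The paper's proof sidesteps the purity question for $\widetilde{L}/K$ entirely. Instead of embedding $\cogalois(L/K)$ into $\cogalois(\widetilde{L}/K)$, it takes a $p$-Sylow subgroup $N$ of $G=\Gal(\widetilde{L}/K)$, sets $E=\widetilde{L}^{N}$, and embeds $\cogalois(L/K)$ into $\cogalois(\widetilde{L}/E)$ by a direct element check: if $\alpha\in T(L/K)$ then $\alpha\in T(\widetilde{L}/E)$ trivially, and $\alpha\in E$ forces $\alpha\in E\cap L=K$ (the intersection is $K$ because $[E:K]=[G:N]$ is prime to $p$ while $[L:K]$ is a $p$-power by Theorem~\ref{tdim_p}). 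Now $\widetilde{L}/E$ is Galois, separable, radical (since $T(\widetilde{L}/K)\subseteq T(\widetilde{L}/E)$ and $\widetilde{L}/K$ is radical as you observed), and of degree $|N|=p^{m}$, so Corollary~\ref{cogalois_grado_p_pureza} gives radical cyclotomic for free and Proposition~\ref{acotacion3} applies to $\widetilde{L}/E$. The point is that passing to the Sylow fixed field manufactures an intermediate extension of $p$-power degree without ever needing to analyze which Carlitz roots land in $\widetilde{L}$.
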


\begin{proof}
Let $G = \Gal(\widetilde{L} / K) = HN$ with $H$ a normal subgroup of
$G$ and $N$ a $p$-Sylow subgroup of $G$. Let $F = \widetilde{L}^{H}
$. We can assume that $F = L$ by changing $H$ by a conjugate. We
obtain the diagram
\[
 \xymatrix{L \ar@{-}[r]^{H}\ar@{-}[d]
          & \widetilde{L} \ar@{-}[d]^{N} \\
     K \ar@{-}[r] & E }
\]

Let $\alpha$ be a nonzero element of $\cogalois(L/K)$. There exists
$N \in R_{T}$ such that $\alpha^{N} = a \in K$. Since $\alpha \in
\widetilde{L}$, $\alpha^{N} \in K \subseteq E$, that is, $\alpha \in
\cogalois(\widetilde{L} / E)$. If $\alpha = 0$ in
$\cogalois(\widetilde{L} / E)$, we would have $\alpha \in E \cap L =
K$ so that $\alpha = 0$ in $\cogalois(L / K)$, a contradiction. Then
$\cogalois(L/K)\subseteq \cogalois(\widetilde{L}/E)$.

Therefore $\mid\cogalois(L/K)\mid\leq
\mid\cogalois(\widetilde{L}/E)\mid\leq
q^{m\grado(\mu(\widetilde{L}))}$.
\end{proof}

\end{document}